\documentclass[a4paper,10pt,reqno, english]{amsart}
\usepackage[utf8]{inputenc}
\usepackage[T1]{fontenc}
\usepackage{amsmath,amsthm}
\usepackage{amsfonts,amssymb,enumerate}
\usepackage{url,paralist}
\usepackage{mathtools,color}
\usepackage[colorlinks=true,urlcolor=blue,linkcolor=red,citecolor=magenta]{hyperref}
\usepackage{enumerate}
\usepackage{anysize}
\usepackage[arrow,curve,matrix,tips,2cell]{xy}
  \SelectTips{eu}{10} \UseTips
  \UseAllTwocells
\usepackage{lscape}
\usepackage{subfigure}

\theoremstyle{plain}
\newtheorem{theorem}{Theorem}[section]
\newtheorem{lemma}[theorem]{Lemma}
\newtheorem{claim}[theorem]{Claim}
\newtheorem{corollary}[theorem]{Corollary}

\newtheorem{conjecture}[theorem]{Conjecture}

\theoremstyle{definition}
\newtheorem{definition}[theorem]{Definition}

\newtheorem{remark}[theorem]{Remark}

\newcommand{\R}{\mathbb{R}}

\newcommand{\Z}{\mathbb{Z}}
\newcommand{\Q}{\mathbb{Q}}
\newcommand{\F}{\mathbb{F}}
\newcommand{\pt}{\mathrm{pt}}

\newcommand\sym{\mathfrak S}

\newcommand{\A}{\mathcal{A}}
\newcommand{\C}{\mathcal{C}}
\newcommand{\D}{\mathcal{D}}
\newcommand{\E}{\mathcal{E}}
\newcommand{\M}{\mathcal{M}}

\newcommand{\conv}{\operatorname{conv}}
\newcommand{\conf}{\operatorname{Conf}}

\newcommand{\id}{\operatorname{id}}

\newcommand{\colim}{\operatorname{colim}}

\newcommand{\relint}{\operatorname{relint}}

\newcommand{\emp}{\operatorname{EMP}}
\renewcommand{\int}{\operatorname{int}}
\newcommand{\hocolim}{\operatorname{hocolim}}
\newcommand{\supp}{\operatorname{supp}}

\begin{document}

\title{Cutting a part from many measures}


\author[Blagojevi\'c]{Pavle V. M. Blagojevi\'{c}}
\address{Institut f\" ur Mathematik, FU Berlin, Arnimallee 2, 14195 Berlin, Germany\hfill\break%
\mbox{\hspace{4mm}}Mathematical Institute SASA, Knez Mihailova 36, 11000 Beograd, Serbia}
\email{blagojevic@math.fu-berlin.de}

\author[Pali\'c]{Nevena Pali\'c}
\address{Institut f\" ur Mathematik, FU Berlin, Arnimallee 2, 14195 Berlin, Germany}
\email{nevenapalic@gmail.com}

\author[Sober\'on]{Pablo Sober\'on} 
\address{Baruch College, City University of New York, One Bernard Baruch Way, New York, NY 10010,\hfill\break%
\mbox{\hspace{4mm}}United States} 
\email{pablo.soberon-bravo@baruch.cuny.edu}

\author[Ziegler]{G\"unter M. Ziegler} 
\address{Institut f\" ur Mathematik, FU Berlin, Arnimallee 2, 14195 Berlin, Germany} 
\email{ziegler@math.fu-berlin.de}

\thanks{The research by Pavle V. M. Blagojevi\'{c} leading to these results has received funding from DFG via the Collaborative Research Center TRR~109 ``Discretization in Geometry and Dynamics'', and the grant ON 174024 of the Serbian Ministry of Education and Science.\newline\indent
The research by Nevena Pali\'c leading to these results has received funding from DFG via the Berlin Mathematical School.\newline\indent
The research by G\"unter M. Ziegler leading to these results has funding from DFG via the Collaborative Research Center TRR~109 ``Discretization in Geometry and Dynamics''.\newline\indent
This material is based upon work supported by the National Science Foundation under Grant No. DMS-1440140 while Blagojevi\'c, Pali\'c, and Ziegler were in residence at the Mathematical Sciences Research Institute in Berkeley CA, during the Fall of 2017. \newline\indent
The research of Pablo Sober\'on leading to these results has received funding from the National Science Foundation under Grant No. DMS-1851420.
}
\date{}

\begin{abstract}
Holmsen, Kyn\v{c}l and Valculescu recently conjectured that if a finite set $X$ with $\ell n$ points in $\mathbb{R}^d$ that is colored by $m$ different colors can be partitioned into $n$ subsets of $\ell$ points each, such that each subset contains points of at least $d$ different colors, then there exists such a partition of $X$ with the additional property that the convex hulls of the $n$ subsets are pairwise disjoint. 

We prove a continuous analogue of this conjecture, generalized so that each subset contains points of at least $c$ different colors, where we also allow $c$ to be greater than $d$. 
Furthermore, we give lower bounds on the fraction of the points each of the subsets contains from $c$ different colors.
For example, when $n\geq 2$, $d\geq 2$, $c\geq d$ with $m\geq n(c-d)+d$ are integers, and  $\mu_1, \dots, \mu_m$ are $m$ positive finite absolutely continuous measures on $\R^d$, we prove that there exists a partition of $\R^d$ into $n$ convex pieces which equiparts the measures $\mu_1, \dots, \mu_{d-1}$, and in addition every piece of the partition has positive measure with respect to at  least $c$ of the measures $\mu_1, \dots, \mu_m$.

\end{abstract}

\maketitle


\section{Introduction and statement of the main results}
\label{sec : Introduction and the statement of the main results }

The classical measure partition problems ask whether, for a given collection of measures of some Euclidean space, the ambient Euclidean space can be partitioned in a prescribed way so that each of the given measures gets cut into equal pieces. 

The first example of such a result is the well known ham-sandwich theorem, conjectured by Steinhaus and later proved by Banach.
It claims that given $d$ measures in $\R^d$, one can cut $\R^d$ by an affine hyperplane into two pieces so that each of the measures is cut into halves.
Motivated by the ham-sandwich theorem, Gr\"unbaum posed a more general hyperplane measure partition problem in 1960 \cite[Sec.\,4\,(v)]{Grunbaum1960}. 
He asked whether any given measure in the Euclidean space $\R^d$ can be cut by $k$ affine hyperplanes into $2^k$ equal pieces.
An even more general problem was proposed and considered by Hadwiger \cite{Hadwiger1966} and Ramos \cite{Ramos1996}:
Determine the minimal dimension $d$ such that for every collection of $j$ measures  on $\R^d$ there exists an arrangement of $k$ affine hyperplanes in $\R^d$ that cut all measures into $2^k$ equal pieces.
For a survey on the Gr\"unbaum--Hadwiger--Ramos hyperplane measure partition problem consult \cite{Blagojevic2015-02}.

Furthermore, in 2001 B\'ar\'any and Matou\v{s}ek \cite{Barany2001} considered partitions of measures on the sphere $S^2$ by fans with the requirement that each angle of the fan contains a prescribed proportion of every measure.

\medskip
In this paper, motivated by a conjecture of Holmsen, Kyn\v{c}l \& Valculescu \cite[Con.\,3]{Holmsen2017}, we consider many measures in a Euclidean space, and instead of searching for equiparting convex partitions we look for convex partitions that in each piece capture a positive amount from a (large) prescribed number of the given measures. 

\begin{definition}
\label{def:partition}
Let $d\geq 1$ and $n\geq 1$ be integers.
An ordered collection of closed subsets $(C_1, \dots, C_n)$ of $\R^d$ is called a \emph{partition} of $\R^d$ if
\begin{compactenum}[\rm \qquad(1)]
\item $\bigcup_{i=1}^n C_i = \R^d$,
\item $\int(C_i) \neq \emptyset$ for every $1\leq i\leq n$, and
\item $\int(C_i) \cap \int(C_j) = \emptyset$ for all $1\leq i< j\leq n$.	
\end{compactenum}
A partition $(C_1, \dots, C_n)$ is called \emph{convex} if all subsets $C_1, \dots, C_n$ are convex.
\end{definition}

Let $m\geq 1$, $n\geq 1$ , $c\geq 1$ and $d\geq 1$ be integers, and let $\mathcal{M}=(\mu_1,\ldots,\mu_m)$ be a collection of $m$ finite absolutely continuous measures in $\R^d$. 
Moreover, assume that $\mu_j(\R^d)>0$, for every $1 \leq j \leq m$.
For us a measure is an absolutely continuous measure if it is absolutely continuous with respect to the standard Lebesgue measure. 

\medskip
We are interested in the existence of a convex partition $(C_1, \dots, C_n)$ of $\R^d$ with the property that each set $C_i$ contains a positive amount of at least $c$ of the measures, that is
\[
\# \big\{ j : 1\leq j\leq m,\,\mu_j(C_i)>0\big\} \geq c, 
\]
for every $1\leq i\leq n$.
In the  case when the measures are given by finite point sets, we say that a point set $X\subseteq\R^d$ is in general position if no $d+1$ points from $X$ lie in an affine hyperplane in $\R^d$.
For the point set measures in general position Holmsen, Kyn\v{c}l and Valculescu  proposed the following natural conjecture \cite[Con.\,3]{Holmsen2017}. 

\begin{conjecture}[Holmsen, Kyn\v{c}l, Valculescu, 2017]
\label{conj:Holmsen}
Let $d\geq 2$, $\ell \geq 2$, $m\geq 2$ and $n\geq 1$ be integers with $m \geq d$ and $\ell \geq d$.
Consider a set $X\subseteq \R^d$ of $\ell n$ points in general position that is colored with at least $m$ different colors. 
If there exists a partition of the set $X$ into $n$ subsets of size $\ell$ such that each subset contains points colored by at least $d$ colors, then there exists such a partition of $X$ that in addition has the property that the convex hulls of the $n$ subsets are pairwise disjoint.   
\end{conjecture}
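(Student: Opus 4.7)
The plan is to reduce Conjecture~\ref{conj:Holmsen} to the continuous measure-partition theorem announced in the abstract, by approximating the colored set $X$ with absolutely continuous measures, applying the continuous statement, and then recovering the discrete partition via a compactness and general-position argument.

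Writing $X_j \subseteq X$ for the points of color $j$, I would fix a small $\varepsilon > 0$ such that the balls $B(x,\varepsilon)$ for $x \in X$ are pairwise disjoint, and let $\mu_j^\varepsilon$ be Lebesgue measure on $\bigcup_{x \in X_j} B(x,\varepsilon)$, renormalized to have total mass $|X_j|$. The auxiliary measure $\mu_0^\varepsilon := \tfrac{1}{\ell n}\sum_{j=1}^{m}\mu_j^\varepsilon$ has total mass $1$ and is, up to smoothing, the empirical measure of $X$. I would then apply the continuous theorem to an input list in which $\mu_0^\varepsilon$ is among the $d-1$ equipartitioned measures and $\mu_1^\varepsilon,\dots,\mu_m^\varepsilon$ are among the candidates for the positive-measure condition. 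This should yield a convex partition $(C_1^\varepsilon,\dots,C_n^\varepsilon)$ of $\R^d$ with $\mu_0^\varepsilon(C_i^\varepsilon) = 1/n$ for every $i$, and with each $C_i^\varepsilon$ having positive mass from at least $d$ of the color measures. A subsequential limit $(C_1,\dots,C_n)$ as $\varepsilon \to 0$ (using compactness of convex $n$-partitions of a large cube containing $X$) together with a generic perturbation of the separating hyperplanes, justified by the general-position assumption, ensures that no point of $X$ lies on any $\partial C_i$. Once this holds, the equipartition of $\mu_0^\varepsilon$ forces exactly $\ell$ points of $X$ into each $C_i$, the positive-measure condition translates into at least $d$ distinct colors being represented in each $C_i$, and since $\conv(X \cap C_i) \subseteq \int(C_i)$ with pairwise disjoint interiors, the resulting convex hulls are pairwise disjoint, as required.

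The main obstacle is the interplay between the $d-1$ equipartition slots of the continuous theorem and the ``at least $d$ colors per piece'' requirement, under the weak hypothesis $m \geq d$. To make $\mu_0^\varepsilon$ one of the equipartitioned measures one has to spend one of those $d-1$ slots on it, and after doing so the direct application of the continuous result only guarantees positive measure from roughly $d-1$ of the actual color measures in each piece, falling short of $d$ by one. Strengthening the parameters (e.g.\ asking for $c = d+1$) would force $m \geq n + d - 1$, which is strictly stronger than the conjecture's $m \geq d$. A second, subtler obstruction appears for very small color classes $X_j$: the continuous theorem may be forced to route boundaries of the $C_i^\varepsilon$ arbitrarily close to the few points of such a class so as to give every piece positive mass from enough colors, and in the $\varepsilon \to 0$ limit such points can end up on several boundaries, making the discrete assignment ambiguous. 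I expect both difficulties to be addressed by using the pre-existing combinatorial partition from the hypothesis of Conjecture~\ref{conj:Holmsen}: it should allow a non-uniform choice of weights on the $\mu_j^\varepsilon$ adapted to the initial assignment of points to pieces, together with a combinatorial exchange argument on the limit partition that preserves both the $\ell$-per-piece count and the $d$-color-per-piece property.
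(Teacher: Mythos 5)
This statement is labeled a \emph{conjecture}, and the paper does not prove it: it remains open. The authors note only that it was settled for $d = 2$ by Holmsen, Kyn\v{c}l, and Valculescu themselves, and for $m = d$ by Blagojevi\'c, Rote, Steinmeyer, and Ziegler~\cite{Blagojevic2017}; the contribution of the present paper is a family of \emph{continuous} analogues (Theorems~\ref{thm:geometric}, \ref{thm:one measure}, \ref{thm:sum of measures}, \ref{th:referee's}, and so on), not a resolution of the discrete conjecture itself. So there is no proof in the paper to compare yours against, and any purported proof would be a substantial new result.

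On the merits of your sketch, the gaps you flag yourself are real and, as stated, fatal. First, the parameter mismatch is not a cosmetic issue: Theorem~\ref{th:referee's} with $c = d$ gives $m = d$ and equiparts the first $d-1$ of those $m$ measures, all of which are \emph{color} measures; there is no free slot for your auxiliary $\mu_0^\varepsilon$ without either increasing $m$ (which changes the required $c$) or demanding $c = d+1$ (which forces $m \ge n + d - 1$, strictly stronger than the conjecture's $m \ge d$). Second, and more seriously, the discretization step does not degrade gracefully: ``positive measure'' in the continuous theorem can tend to zero as $\varepsilon \to 0$, so a limit piece $C_i$ may end up merely grazing the support of some color without containing any point of that color, and your perturbation-plus-exchange argument to repair this is asserted but not supplied. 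Third, your proposal nowhere exploits the crucial hypothesis of the conjecture --- the assumed existence of \emph{some} partition of $X$ into $n$ parts of size $\ell$ with $\ge d$ colors each. Without that hypothesis the statement is false (take all points of one color), so any proof must use it in an essential way; a reduction that starts from generic smoothed measures and only invokes the hypothesis at the end as a vague ``adapted weight choice'' does not yet engage with the combinatorial core of the problem. In short, the approach may contain useful ideas, but it is not a proof, and given the paper's own framing this should be expected: the conjecture is open beyond the known special cases.
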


The conjecture was settled for $d=2$ in the same paper by Holmsen, Kyn\v{c}l and Valculescu \cite{Holmsen2017}.
On the other hand, if instead of finite collections of points one considers finite positive absolutely continuous measures in $\R^d$, Sober\'{o}n \cite{Soberon2012} gave a positive answer on splitting $d$ measures in $\R^d$ into convex pieces such that each piece has positive measure with respect to each of the measures. 
Moreover, he proved existence of convex partitions that equipart all measures.
A discretization of Sober\'on's result by Blagojevi\'{c}, Rote, Steinmeyer and Ziegler \cite{Blagojevic2017} gave a positive answer to Conjecture \ref{conj:Holmsen} in the case when $m=d$.
In addition, they were able to show that the set $X$ can be partitioned into $n$ subsets in such a way that all color classes are equipartitioned simultaneously.

\medskip
In this paper we prove several continuous results of a similar flavor, trying to come closer to a positive answer to Conjecture \ref{conj:Holmsen} in the case when $m\geq  d$.
The first of the three results is the following.

\begin{theorem}
\label{thm:geometric}
Let $d\geq 2$, $m\geq 2$, $n\geq 2$, and $c\geq d$ be integers.
If 
\[
m\geq n(c-d)+d,
\] 
then for every collection $\M=(\mu_1, \dots, \mu_m)$ of $m$ positive finite absolutely continuous measures on $\R^d$, there exists a partition of $\R^d$ into $n$ convex subsets $(C_1, \dots, C_n)$, which is a convex $n$-fan, such that each of the subsets has positive measure with respect to at least $c$ of the measures $\mu_1, \dots, \mu_m$. 
In other words, 
\[
\# \big\{ j : 1\leq j\leq m,\,\mu_j(C_i)>0\big\} \geq c
\]
for every $1\leq i \leq n$. 
\end{theorem}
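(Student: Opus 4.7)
The plan is to reduce the theorem to a convex equipartition statement for $d$ of the $m$ measures, and then handle the remaining $m-d \ge n(c-d)$ measures by an equivariant-topological argument that distributes them across the $n$ pieces of the fan. The counting threshold $m \ge n(c-d)+d$ is exactly what this reduction demands: $d$ measures are fully equipartitioned (and so automatically contribute positively to every piece), leaving $m-d$ measures to supply the additional $c-d$ positive contributions per piece required by the theorem.

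First I would select $d$ of the measures, say $\mu_1, \dots, \mu_d$, and invoke a convex $n$-fan equipartition theorem in the spirit of Sober\'on's theorem from~\cite{Soberon2012}, together with its convex-fan refinements (Karasev, Hubard--Sober\'on, Blagojevi\'c--Matschke--Ziegler via nested configuration spaces). This produces a non-empty parameter space $\mathcal{F}$ of convex $n$-fans $(C_1, \dots, C_n)$ equiparting $\mu_1, \dots, \mu_d$, on which $\mathfrak{S}_n$ acts by relabelling the pieces. On any such fan, $\mu_j(C_i) = \mu_j(\R^d)/n > 0$ for every $i$ and every $j \le d$, so each piece already accounts for $d$ of the required $c$ positive measures; in particular, the case $c = d$ is settled immediately.

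For $c > d$, I would group the surplus measures $\mu_{d+1}, \dots, \mu_m$ into $n$ disjoint blocks of size $c-d$ (discarding any leftovers), using the hypothesis $m - d \ge n(c-d)$. The task becomes: find $F = (C_1, \dots, C_n) \in \mathcal{F}$ and a permutation $\sigma \in \mathfrak{S}_n$ so that for each $i$ the piece $C_{\sigma(i)}$ has positive measure with respect to every measure in the $i$-th block. Because $\mathcal{F}$ is naturally $\mathfrak{S}_n$-equivariant, I would encode the failure of this positivity condition on each piece--measure pair by coordinates in a suitable real $\mathfrak{S}_n$-representation $W$, assemble these into an equivariant test map $\Phi \colon \mathcal{F} \to W$, and then apply an equivariant Borsuk--Ulam / obstruction-theoretic argument to show that $\Phi$ must vanish somewhere, producing the desired fan. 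A slight preliminary perturbation (replacing $\mu_j$ by a measure with smooth positive density near the support, for instance) will ensure that ``positive measure on $C_i$'' is an open condition, as is standard.

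The hard part will be the final topological step: to carry out the equivariant cohomology computation on the moduli space $\mathcal{F}$ of convex $n$-fan equipartitions, one needs a good understanding of its topology and of the $\mathfrak{S}_n$-representation type of $W$, and one must handle the boundary stratum where pieces degenerate or where the apex escapes to infinity. The numerical assumption $m \ge n(c-d) + d$ is precisely the hypothesis that makes the average of positive contributions per piece sufficient; the topology is what must turn this averaged statement into a simultaneous one holding at a single fan.
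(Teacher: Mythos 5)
Your proposal is not a proof; it is a strategy sketch, and it takes a route that diverges sharply from the paper's actual argument while leaving the central step undone. The paper's proof of Theorem~\ref{thm:geometric} is entirely elementary and geometric: pick a point $v_j$ in the interior of $\supp(\mu_j)$ for each $j$, perturb to general position, form the polytope $P = \conv\{v_1,\dots,v_m\}$, pick a $(d-2)$-face $F$ of $P$, and rotate a supporting hyperplane about $\aff(F)$ until exactly $c-d$ of the $v_j$ lie strictly on one side. The remaining points are then swept by half-hyperplanes through $\aff(F)$, and one selects $K_0, K_{c-d}, K_{2(c-d)}, \dots, K_{(n-2)(c-d)}, K_{m-c+1}$ to assemble an $n$-fan; a direct count shows every sector sees at least $c$ of the support interiors. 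No equipartition result, no equivariant topology, no prime power hypothesis.

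Your reduction has several genuine gaps. First, you need a space $\mathcal{F}$ of convex \emph{$n$-fans} equiparting $d$ prescribed measures for arbitrary $n \geq 2$, $d \geq 2$; the results you cite (Sober\'on, Karasev, Hubard--Sober\'on, Blagojevi\'c et al.) give convex equipartitions parametrized by configuration spaces or power diagrams, but these are not fans in general, and the fan versions that do exist carry restrictive hypotheses (e.g.\ $n$ a prime power, $d=2$). So your starting moduli space may be empty or unknown. Second, grouping $\mu_{d+1}, \dots, \mu_m$ into pre-assigned blocks of size $c-d$ and demanding that some $C_{\sigma(i)}$ meet \emph{every} measure in the $i$-th block strictly over-constrains the problem; the theorem only asks for each piece to see $c$ of the $m$ measures, not a prescribed set, so a valid fan for the original statement could fail your blocked version, and the topological obstruction argument would then (correctly) fail to produce one. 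Third, the ``hard part'' you identify --- the equivariant cohomology computation on $\mathcal{F}$ with the test representation $W$ --- is precisely where the proof would have to happen, and you explicitly do not carry it out. Taken together, the plan is not a viable proof as stated, and it misses the much simpler geometric construction that the paper uses, which cleanly handles all $n \geq 2$ without any topological machinery.
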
 

The following two theorems have stronger statements --- in Theorem \ref{thm:one measure} we additionally show that one of the measures can be equipartioned without changing the bound on $m$, and in Theorem \ref{thm:sum of measures} we prove that the sum of all the measures can be equipartitioned if we allow the number of measures  $m$ to increase.

\begin{theorem}
\label{thm:one measure}
Let $d\geq 2$, $m\geq 2$, and $c\geq d$ be integers, and let $n=p^k$ be a prime power.
If 
\[
m\geq n(c-d)+\frac{dn}{p}-\frac{n}{p}+1,
\] 
then for every collection $\M=(\mu_1, \dots, \mu_m)$ of $m$ positive finite absolutely continuous measures on $\R^d$, there exists a partition of $\R^d$ into $n$ convex subsets $(C_1, \dots, C_n)$ that equiparts the measure $\mu_m$ with the additional property that each of the subsets has positive measure with respect to at least $c$ of the measures $\mu_1, \dots, \mu_m$. 
In other words, 
\[
\mu_m(C_1) = \dots = \mu_m(C_n) = \frac{1}{n} \mu_m(\R^d), 
\]
and
\[
\# \big\{ j : 1\leq j\leq m,\,\mu_j(C_i)>0\big\} \geq c
\]
for every $1\leq i \leq n$. 
\end{theorem}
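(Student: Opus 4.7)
The plan is to upgrade the configuration-space/test-map scheme of Theorem~\ref{thm:geometric}, replacing the low-dimensional space of convex $n$-fans with a high-dimensional parameter space that automatically enforces the equipartition of $\mu_m$. Since $n=p^k$ is a prime power, the natural candidate is the Karasev--Hubard--Aronov--Sober\'on parameter space $\emp(\mu_m;n)$ of convex equipartitions of $\mu_m$ into $n$ convex pieces, realized concretely via power (weighted Voronoi) diagrams with sites on a moment-type curve in $\R^d$. It is compact of topological dimension $d(n-1)$, carries the natural $\sym_n$-action permuting the pieces, and restricts to a free action of the elementary abelian subgroup $G=(\Z/p)^k\leq\sym_n$ on $[n]$ via the regular representation. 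Its Fadell--Husseini $G$-index in $\F_p$-cohomology is bounded below by the top Euler-class product that underlies the classical Karasev--Sober\'on equipartition theorem for $d$ measures.

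For the test map, send a partition $\pi=(C_1,\ldots,C_n)\in\emp(\mu_m;n)$ to the $G$-equivariant vector
$$
\Phi(\pi) \ = \ \Big(\,\textstyle\sum_{j\in J}\mu_j(C_i)\,\Big)_{i,\,J} \ \in \ \R^{N}_{\geq 0},\qquad N := n\tbinom{m-1}{m-c+1},
$$
indexed by pairs $(i,J)$ with $i\in[n]$ and $J\subseteq[m-1]$, $|J|=m-c+1$, with $G$ permuting the $i$-index freely. Because $\mu_m(C_i)=\tfrac1n\mu_m(\R^d)>0$ holds by construction, the conclusion of the theorem is equivalent to the existence of some $\pi$ with all coordinates $\Phi(\pi)_{i,J}>0$. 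Assuming no such $\pi$ exists, normalizing $\Phi$ to the standard simplex and restricting to its boundary yields a $G$-equivariant map $\emp(\mu_m;n) \to \partial\Delta^{N-1}$, with $G$ acting freely on each of the $\binom{m-1}{m-c+1}$ orbits of vertices.

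The main difficulty is the equivariant index comparison that contradicts the existence of this map, following the Blagojevi\'c--Ziegler template. An upper bound on $\ind_G(\partial\Delta^{N-1})$ in $\F_p$-cohomology is assembled from a product of Euler classes of the vertex representations over the $\binom{m-1}{m-c+1}$ orbits, where each orbit contributes an Euler class whose degree reflects the action of $G$ on $[n]$ by the regular representation; the key $1/p$ factor in the hypothesis emerges from this representation-theoretic accounting, in contrast to the cruder count available in the $\sym_n$-only setting of Theorem~\ref{thm:geometric}. Balancing this upper bound against the Karasev--Sober\'on lower bound of $d(n-1)$ on $\ind_G(\emp(\mu_m;n))$, the hypothesis $m\geq n(c-d)+n(d-1)/p+1$ is precisely what is required for the target index to fall strictly below the source index, producing the desired contradiction and hence the partition.
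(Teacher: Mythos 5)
Your test map $\Phi(\pi)=\bigl(\sum_{j\in J}\mu_j(C_i)\bigr)_{i,J}$ is essentially a coordinate-wise reformulation of the test map $f_\M$ that the paper uses: a coordinate $\Phi(\pi)_{i,J}$ vanishes exactly when the matrix $f_\M(\pi)$ lies in the subspace $L_{i,J}$ of the arrangement $\A(m,n,c)$ from \eqref{eq:A one measure}. So the starting point is in the right spirit, and your reduction ``no $\pi$ with all coordinates positive $\Rightarrow$ an equivariant map to a target space'' is the same reduction the paper makes in Theorem~\ref{thm : CS-TM scheme one measure}. However, the target you then work with, $\partial\Delta^{N-1}$ with $N=n\binom{m-1}{m-c+1}$, is the wrong space, and this breaks the argument at its core.

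The problem is twofold. First, $\partial\Delta^{N-1}$ has $G$-fixed points whenever $\binom{m-1}{m-c+1}\geq 2$ (i.e.\ for all $c>2$): the fixed-point set of $\Delta^{N-1}$ is the sub-simplex of orbit barycenters, which meets the boundary as soon as there are at least two orbits. Consequently the Fadell--Husseini index of $\partial\Delta^{N-1}$ is the zero ideal, and the inclusion $\ind_G(\partial\Delta^{N-1})\subseteq\ind_G(\emp(\mu_m;n))$ holds vacuously, producing no contradiction. The actual image of the normalized $\Phi$ is a proper subcomplex of $\partial\Delta^{N-1}$ that avoids those fixed points (because $\sum_i\mu_j(C_i)=\mu_j(\R^d)>0$ prevents any row from being identically zero), but you never identify that subcomplex, and working with the ambient boundary sphere loses exactly the information you need. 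Second, and equally fatally for the quantitative bound: even if you restrict to a fixed-point-free target, the dimension of $\partial\Delta^{N-1}$ is $N-2$, which grows like $n\binom{m-1}{m-c+1}$, while the dimension the paper actually needs is $nc-m-n$. The paper gets this collapse by a nontrivial chain of reductions (Lemmas~\ref{lemma : beta}--\ref{lemma : hocolim E}: Equivariant Projection Lemma, Homotopy Pushdown along $\varphi$, and the Wedge Lemma) replacing $\bigcup\A$ by the $(nc-m-n)$-dimensional complex $\Delta(Q'\setminus\{\hat1\})*\Delta(\varphi^{-1}(\{\hat1\}))$ or the $(n-2)$-dimensional $\Delta(Q')$; none of that structure is visible from the simplex boundary.

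Finally, the source of the $\tfrac1p$ factor in the hypothesis is misattributed. It does \emph{not} come from ``representation-theoretic accounting of Euler classes over orbits'' on the target side. In the paper it comes from the source side: the Serre spectral sequence of the Borel construction $EG\times_G\conf(\R^d,n)$ is constant from page~$2$ up to page $(d-1)(n-1)+1$ when $k=1$ and only up to page $(d-1)(n-\tfrac{n}{p})+1$ when $k\geq 2$ (see \eqref{eq : ss - 01} and \eqref{eq : ss - 02}, after \cite{Cohen1976LNM533} and \cite{Blagojevic2015}). Balancing ``no differentials in the source until page $(d-1)(n-n/p)+1$'' against ``the target's spectral sequence must have a differential by page $\dim X+2$'' gives precisely the bound $m\geq n(c-d)+\tfrac{dn}{p}-\tfrac{n}{p}+1$. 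A cruder index lower bound such as the Karasev--Sober\'on $d(n-1)$ would not reproduce this $1/p$ for $k\ge 2$, and the ``product of Euler classes'' on $\partial\Delta^{N-1}$ vanishes outright because the representation has trivial summands. So the proposal is missing both the homotopy-colimit reduction of the target and the specific spectral-sequence input on $\conf(\R^d,n)$, which together are the heart of the proof.
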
 

\begin{theorem}
\label{thm:sum of measures}
Let $d\geq 2$, $m\geq 2$, and $c\geq d$ be integers, and let $n=p^k$ be a prime power.
If 
\begin{compactenum}[\rm\qquad(a)]
\item $n(c-1) \geq m$ and $\max\{m,n\}\geq n(c-d)+\tfrac{dn}{p}-\tfrac{n}{p}+n$, or
\item $n(c-1) < m$,
\end{compactenum}
then for every collection $\M=(\mu_1, \dots, \mu_m)$ of $m$ positive finite absolutely continuous measures on $\R^d$, there exists a partition of $\R^d$ into $n$ convex subsets $(C_1, \dots, C_n)$ that equiparts the sum of the measures $\mu = \mu_1 + \dots +\mu_m$ with the additional property that each of the subsets has positive measure with respect to at least $c$ of the measures $\mu_1, \dots, \mu_m$. 
In other words, 
\[
\mu(C_1) = \dots = \mu(C_n) = \frac{1}{n} \mu(\R^d) , 
\]
and
\[
\# \big\{ j : 1\leq j\leq m,\,\mu_j(C_i)>0\big\} \geq c
\]
for every $1\leq i \leq n$. 
\end{theorem}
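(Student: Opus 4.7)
The plan is to reduce Theorem~\ref{thm:sum of measures} to Theorem~\ref{thm:one measure} by treating the sum $\mu = \mu_1 + \cdots + \mu_m$ as an extra measure. Define the augmented family $\M' = (\mu_1, \dots, \mu_m, \mu_{m+1})$ with $\mu_{m+1} := \mu$, and apply Theorem~\ref{thm:one measure} to $\M'$ with the same prime power $n = p^k$ and the same $d$, but with threshold $c+1$, taking $\mu_{m+1}$ as the measure to be equipartitioned. The required hypothesis $m+1 \geq n(c+1-d) + \frac{dn}{p} - \frac{n}{p} + 1$ is equivalent to
\[
m \;\geq\; n(c-d) + \frac{dn}{p} - \frac{n}{p} + n,
\]
which I will denote by $(\ast)$. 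Whenever $(\ast)$ holds, Theorem~\ref{thm:one measure} produces a convex partition $(C_1,\dots,C_n)$ that equiparts $\mu_{m+1} = \mu$ and in which each piece has positive measure with respect to at least $c+1$ of the $m+1$ measures in $\M'$. Since $\mu(C_i) = \tfrac{1}{n}\mu(\R^d) > 0$ automatically, at least $c$ of those are among the original $\mu_1, \dots, \mu_m$, as required.

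Verifying that hypotheses (a) and (b) both imply $(\ast)$ is a direct arithmetic check. For (a), the second clause is $\max\{m,n\} \geq n(c-d) + \tfrac{dn}{p} - \tfrac{n}{p} + n$; since the right-hand side strictly exceeds $n$ as soon as $c \geq d \geq 2$ and $p \geq 2$ (because $n(c-d)\geq 0$ and $\frac{n(d-1)}{p}>0$), the maximum is necessarily attained at $m$, giving $(\ast)$. For (b), $m \geq n(c-1) + 1$ implies $(\ast)$ iff $n(d-2) + 1 \geq \frac{n(d-1)}{p}$, which holds for all $d \geq 3$ and $p \geq 2$, and also in the base case $d = 2$, $n = p$.

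The main obstacle is the residual subcase of (b) with $d = 2$ and $n = p^k$ for $k \geq 2$, where $(\ast)$ is strictly stronger than the hypothesis $m > n(c-1)$ and the reduction to Theorem~\ref{thm:one measure} breaks down. For this range I would abandon the reduction and run the CS-TM equivariant-topology machinery directly on the configuration space of convex equipartitions of the sum $\mu$ itself, equipped with its natural $(\Z/p)^k$-action permuting the $n$ pieces, together with a test map recording the incidences $\mu_j(C_i)=0$ for $1\le j\le m$ and $1\le i\le n$. A direct Euler-class and dimension computation, in spirit identical to the one underlying the proof of Theorem~\ref{thm:one measure} but adapted so that the equipartitioned measure is the sum $\mu=\mu_1+\cdots+\mu_m$ rather than a single $\mu_j$, should then deliver the remaining case by leveraging the abundance of measures $m > n(c-1)$ to force the equivariant test map to avoid the bad locus.
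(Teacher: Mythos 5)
Your reduction to Theorem~\ref{thm:one measure}, appending the sum $\mu$ as an auxiliary measure and raising the threshold to $c+1$, is a genuinely different route from the paper. The paper constructs a separate CS/TM scheme (Theorems~\ref{thm : CS-TM scheme sum of measures} and~\ref{thm:no map sum of measures}) built around the smaller ambient space $\widetilde V$: the $n$ extra column-sum constraints in $\widetilde V$ reduce the dimension of $\Delta(\widetilde\varphi^{-1}(\{\hat 1\}))$ to $nc-n-\max\{m,n\}$, and this is precisely what produces the $\max\{m,n\}$ term in hypothesis~(a). Your augmentation discards those constraints and only yields your condition $(\ast)$, but your arithmetic is correct: $(\ast)$ coincides with the second clause of (a) (the right-hand side exceeds $n$, forcing $\max\{m,n\}=m$), and $(\ast)$ follows from~(b) exactly when $n(d-2)+1\geq n(d-1)/p$, which holds for $d\geq 3$ and also for $d=2$, $n=p$. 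So the reduction really does prove (a) in full and (b) except when $d=2$ and $n=p^k$ with $k\geq 2$.

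The residual subcase of (b), namely $d=2$, $k\geq 2$ and $n(c-1)<m<n(c-1)+p^{k-1}$, is a genuine gap in your proposal. You propose to ``run the CS-TM equivariant-topology machinery directly'' with ``a direct Euler-class and dimension computation,'' but you carry out none of it: there is no arrangement, no intersection poset or homotopy pushdown, and no spectral-sequence comparison, and the paper's actual tool for this theorem is a page-by-page Serre spectral sequence argument via localization, not Euler classes. Moreover this residual corner is exactly where the page counts are tightest: the paper's argument for case~(b) of Theorem~\ref{thm:no map sum of measures} uses the $(n-2)$-dimensionality of $\Delta(Q')$ together with the collapse range $(d-1)(n-\tfrac{n}{p})+1$ of $E_*^{*,*}(\lambda)$, and it invokes the inequality $(d-1)(n-\tfrac{n}{p})+1\geq n$, which for $d=2$ and $k\geq 2$ reduces to $1\geq p^{k-1}$ and fails. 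So deferring to ``the machinery'' is not a routine step here; you would need to either supply the full spectral-sequence argument for this subcase with a sharper page count (for instance via Volovikov's iterated index, which the paper itself mentions as an alternative) or find a different argument to close this parameter range.
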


Previous solutions for measure partition problems relied  on a variety of advanced methods from equivariant topology. 
Different configuration space/test map schemes (CS/TM schemes) related partition problems with the questions of {\em non}-existence of appropriately constructed equivariant maps from configuration spaces into suitable test spaces. 
For example, in the proof of the ham-sandwich theorem a sphere  with the antipodal action appears as a test space. 
The test space in the Gr\"unbaum--Hadwiger--Ramos hyperplane measure partition problem is again a sphere, but with an action of the sign permutation group, while the test space in the  B\'ar\'any and Matou\v{s}ek fan partition problem is a complement of an arrangement of linear subspaces equipped with an action of the Dihedral or generalized quaternion group.
In this paper the proof of Theorem \ref{thm:geometric} is elementary and it does not use any topology. 
However, the proofs of Theorem \ref{thm:one measure} and Theorem \ref{thm:sum of measures} rely on a novel CS/TM scheme presented in Theorem \ref{thm : CS-TM scheme one measure} and Theorem \ref{thm : CS-TM scheme sum of measures}:
For the first time the test space is the union of an arrangement of affine subspaces, equipped in this case  with an action of a symmetric group.

\medskip
Furthermore, even stronger measure partition result can be obtained directly without any use of advanced methods of equivariant topology.
We prove the following result with two similar arguments given in Section \ref{sec:proof referee - 01} and Section \ref{sec:proof referee - 02}. 

\begin{theorem}
\label{th:referee's}
Let $d\geq 2$, $m\geq 2$, $n\geq 2$, and $c\geq d$ be integers.
If 
\[
m=n(c-d)+d,
\] 
then for every collection $\M=(\mu_1, \dots, \mu_m)$ of $m$ positive finite absolutely continuous measures on $\R^d$, there exists a partition of $\R^d$ into $n$ convex subsets $(C_1, \dots, C_n)$ that equiparts the first $d-1$ measures $\mu_1,\dots,\mu_{d-1}$ with the additional property that each of the subsets has positive measure with respect to at least $c$ of the measures $\mu_1, \dots, \mu_m$. 
In other words, 
\[
\mu_k(C_1) = \dots = \mu_k(C_n) = \frac{1}{n} \mu_k(\R^d) 
\]
for every $1\leq k\leq d-1$, and
\[
\# \big\{ j : 1\leq j\leq m,\,\mu_j(C_i)>0\big\} \geq c
\]
for every $1\leq i \leq n$. 
\end{theorem}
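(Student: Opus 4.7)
The plan is to apply the Sober\'on--Karasev convex equipartition theorem to $\mu_1,\ldots,\mu_{d-1}$ together with a single, well-chosen auxiliary $d$-th measure $\nu$, and then to recover the positivity conclusion by a one-line pigeonhole argument that uses the exact equality $m = n(c-d)+d$.

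First, I would rescale each ``extra'' measure $\mu_d,\ldots,\mu_m$ to unit total mass by setting $\tilde\mu_j:=\mu_j/\mu_j(\R^d)$ for $d\le j\le m$. This normalization preserves absolute continuity, does not affect the equipartition condition imposed on $\mu_1,\ldots,\mu_{d-1}$, and preserves positivity in the sense that $\mu_j(C_i)>0 \iff \tilde\mu_j(C_i)>0$ for any measurable $C_i$. I then form the auxiliary measure $\nu:=\sum_{j=d}^{m}\tilde\mu_j$, a positive finite absolutely continuous measure with $\nu(\R^d)=m-(d-1)=n(c-d)+1$.

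Next, I would invoke Sober\'on's convex equipartition theorem on the $d$ measures $\mu_1,\ldots,\mu_{d-1},\nu$ in $\R^d$, producing a convex partition $(C_1,\ldots,C_n)$ of $\R^d$ with $\mu_k(C_i)=\mu_k(\R^d)/n$ for $1\le k\le d-1$ (so the first $d-1$ measures are simultaneously equipartitioned as required, and in particular are positive on every $C_i$) and with
$\nu(C_i)=\nu(\R^d)/n=(c-d)+\tfrac{1}{n}$ for every $1\le i\le n$.

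For the positivity conclusion, fix a piece $C_i$ and set $T_i:=\{j:d\le j\le m,\ \mu_j(C_i)>0\}$. Since each $\tilde\mu_j$ satisfies $\tilde\mu_j(C_i)\le \tilde\mu_j(\R^d)=1$ and vanishes on $C_i$ for $j\notin T_i$, I obtain
\[
(c-d)+\tfrac{1}{n}=\nu(C_i)=\sum_{j\in T_i}\tilde\mu_j(C_i)\le |T_i|,
\]
which forces the integer $|T_i|\ge c-d+1$. Together with the $d-1$ equipartitioned measures (which are positive on every piece), each $C_i$ has positive mass from at least $(d-1)+(c-d+1)=c$ of the measures $\mu_1,\ldots,\mu_m$. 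The only substantive ingredient beyond Sober\'on's equipartition theorem is the normalization of the extras, which is exactly what turns an average mass identity into the uniform combinatorial bound $|T_i|\ge c-d+1$; the ``two similar arguments'' alluded to in the statement likely differ only in the particular choice of auxiliary measure used to feed into Sober\'on's theorem.
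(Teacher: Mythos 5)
Your proposal is correct and is essentially identical to the paper's second proof of this theorem: the paper also introduces $\nu(A)=\sum_{i=d}^m \mu_i(A)/\mu_i(\R^d)$, applies Sober\'on's equipartition theorem to $\mu_1,\dots,\mu_{d-1},\nu$, and derives $|T_i|\geq c-d+1$ from $\nu(C_i)=(n(c-d)+1)/n > c-d$. The paper additionally gives a first proof replacing $\nu$ by a discrete point measure supported at one point in $\int(\supp\mu_j)$ for each $d\le j\le m$, which is the ``other similar argument'' you anticipated.
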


\medskip
As a direct corollary of the proof of Theorem \ref{th:referee's} given in Section \ref{sec:proof referee - 02} we get the following straightening. 

\begin{corollary}
Let $d\geq 2$, $m\geq 2$, $n\geq 2$, and $c\geq d$ be integers.
If 
\[
m=n(c-d)+d,
\] 
then for every collection $\M=(\mu_1, \dots, \mu_m)$ of $m$ positive finite absolutely continuous measures on $\R^d$ with the property that
\[
\mu_d (\R^d) = \mu_{d+1}(\R^d) = \dots = \mu_m (\R^d)
\]
there exists a partition of $\R^d$ into $n$ convex subsets $(C_1, \dots, C_n)$ that equiparts the measures 
\[
\mu_1, \ \dots, \ \mu_{d-1}, \ \mu_d+\dots+\mu_m, \ \mu_1+\dots+\mu_m,  
\]
and has the additional property that each of the subsets has positive measure with respect to at least $c$ of the measures $\mu_1, \dots, \mu_m$.
In other words,
\[
\# \big\{ j : 1\leq j\leq m,\,\mu_j(C_i)>0\big\} \geq c
\]
for every $1\leq i \leq n$. 
\end{corollary}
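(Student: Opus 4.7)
The plan is to follow the proof of Theorem~\ref{th:referee's} laid out in Section~\ref{sec:proof referee - 02} and then observe that, under the equal-mass hypothesis of the corollary, the intermediate equipartition of a \emph{normalized} sum of measures promotes to an equipartition of $\mu_d+\cdots+\mu_m$ itself.

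Concretely, I would apply Sober\'on's convex equipartition theorem \cite{Soberon2012} to the $d$ absolutely continuous finite measures $\mu_1,\dots,\mu_{d-1},\nu$, where
\[
\nu := \sum_{j=d}^{m}\frac{\mu_j}{\mu_j(\R^d)}
\]
is the normalized sum of the last $m-d+1$ measures. This yields a convex $n$-partition $(C_1,\dots,C_n)$ simultaneously equiparting all $d$ of these measures; in particular each piece satisfies $\mu_k(C_i)=\mu_k(\R^d)/n>0$ for $1\le k\le d-1$ and $\nu(C_i)=\nu(\R^d)/n=(m-d+1)/n$.

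For the ``at least $c$ positive measures per piece'' conclusion, the first $d-1$ measures already contribute to each piece, so it suffices to show that in every $C_i$ at least $c-d+1$ of the measures $\mu_d,\dots,\mu_m$ have positive mass. Each normalized summand $\mu_j/\mu_j(\R^d)$ has total mass $1$, so if at most $c-d$ of them are positive on $C_i$ then $\nu(C_i)\le c-d$. But from $m=n(c-d)+d$ one computes
\[
\nu(C_i)=\frac{m-d+1}{n}=\frac{n(c-d)+1}{n}=(c-d)+\frac{1}{n}>c-d,
\]
a contradiction. Hence $|\{j:\mu_j(C_i)>0\}|\ge(d-1)+(c-d+1)=c$ for every $i$, which is exactly the conclusion of Theorem~\ref{th:referee's}.

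Finally, under the corollary's extra hypothesis $\mu_d(\R^d)=\cdots=\mu_m(\R^d)=:M$, the normalization is a single global rescaling and $\nu=\tfrac{1}{M}(\mu_d+\cdots+\mu_m)$. Equiparting $\nu$ is therefore equivalent to equiparting $\mu_d+\cdots+\mu_m$; combining this equipartition with those of $\mu_1,\dots,\mu_{d-1}$ then also yields the equipartition of the grand total $\mu_1+\cdots+\mu_m$. The only real obstacle is the bookkeeping step of verifying that the pigeonhole inequality above still applies verbatim when the normalized sum is replaced by the plain sum, which is automatic under the equal-mass hypothesis.
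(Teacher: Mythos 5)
Your proof is correct and follows exactly the route the paper has in mind: it reruns the second proof of Theorem \ref{th:referee's} (Soberón applied to $\mu_1,\dots,\mu_{d-1},\nu$ with the normalized sum $\nu$, then pigeonhole via $\nu(C_i)=(c-d)+1/n>c-d$), and then observes that the equal-mass hypothesis makes $\nu$ a scalar multiple of $\mu_d+\dots+\mu_m$, so equiparting $\nu$ equiparts the plain sum, and combining with the equipartitions of $\mu_1,\dots,\mu_{d-1}$ also equiparts the grand total.
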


\medskip
For theorems \ref{thm:geometric}, \ref{thm:one measure}, \ref{thm:sum of measures} and \ref{th:referee's} one can wonder: {\em Are the lower bounds on the number of the measures $m$ optimal} ?
We show that in the case when we require equipartition of $d-1$, out of $m$, measures the lower bound $m=n(c-d)+d$ from Theorem \ref{th:referee's} is optimal.  

\begin{theorem}
	\label{thm:optimal}
	Let $d \ge 1$, $n \ge 2$ and  $c \ge d$ be integers, and let $m = n(c-d)+d-1$.  
	There exists a collection of $m$ positive finite absolutely continuous measures in $\R^d$ such that for every cpartition of $\R^d$ into $n$ convex subsets $(C_1, \dots, C_n)$ that equiparte the first $d-1$ measures there is at least one part of the partition that has positive measure with respect to at most $c-1$ of the measures $\mu_1, \dots, \mu_m$.
\end{theorem}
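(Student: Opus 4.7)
The plan is to construct, for each admissible triple $(d,n,c)$ with $d\geq 1$, $n\geq 2$, $c\geq d$, an explicit collection of $m = n(c-d) + d - 1$ positive finite absolutely continuous measures in $\R^d$ for which every convex $n$-partition equipartitioning the first $d-1$ measures admits a piece with positive measure from at most $c-1$ of the measures, establishing the optimality of the bound $m = n(c-d) + d$ in Theorem~\ref{th:referee's}.

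For $d \geq 2$, I would fix $d-1$ points $p_1, \ldots, p_{d-1} \in \R^d$ in general position, spanning a $(d-2)$-dimensional affine subspace $A = \aff\{p_1, \ldots, p_{d-1}\}$, and take $\mu_i$ (for $i = 1, \ldots, d-1$) to be a rotationally symmetric absolutely continuous measure concentrated in a tiny ball $B(p_i, \varepsilon)$. For any equipartition $(C_1, \ldots, C_n)$, positivity of each $\mu_i(C_l)$ combined with the impossibility of $p_i \in \operatorname{int}(C_l)$ (else some other piece would have $\mu_i(C_{l'}) = 0$) forces $p_i \in \partial C_l$ for all $i, l$. Using convexity of each $C_l$ and the general position of the $p_i$'s, this incidence pattern forces the partition to be a \emph{fan around $A$}: $n$ convex wedges each bounded by two hyperplanes through $A$ and tiling $\R^d$. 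Rotational symmetry of the $\mu_i$'s then forces all wedges to have equal angular width $2\pi/n$, so equipartitions form a one-parameter family indexed by a rotation angle $\alpha \in \R/(2\pi/n)\Z$.

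Next I would choose $n(c-d)$ further points $q_1, \ldots, q_{n(c-d)} \in \R^d \setminus A$ whose angular coordinates $\theta_j$ around $A$ are pairwise distinct modulo $2\pi/n$, and let $\mu_{d-1+j}$ be a tiny-ball absolutely continuous measure at $q_j$. For each fan equipartition with rotation $\alpha$, the point $q_j$ lies in the interior of a unique wedge unless $\theta_j \equiv \alpha \pmod{2\pi/n}$, in which case it lies on the boundary between two adjacent wedges and $\mu_{d-1+j}$ has positive mass in precisely those two. Genericity of the $\theta_j$'s ensures that at most one $q_j$ can be on a wedge boundary for any fixed $\alpha$, which gives
\[
\sum_{l=1}^n \#\{j : \mu_j(C_l) > 0\} \;\leq\; n(d-1) + n(c-d) + 1 \;=\; n(c-1) + 1 \;\leq\; nc - 1,
\]
using $n \geq 2$. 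By pigeonhole some piece has positive mass from at most $c-1$ measures, as desired. The case $d = 1$ is handled directly by placing $m = n(c-1)$ tiny-ball measures at distinct points of $\R$: at most $n-1$ of them can coincide with the breakpoints of any $n$-interval partition, yielding the same count.

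The main technical obstacle is the structural claim that every equipartition of $\mu_1, \ldots, \mu_{d-1}$ is a fan around $A$. While the tangent cone analysis at each $p_i$ readily produces a local convex cone partition with $p_i$ on every cone boundary, one must rule out non-fan cone configurations (such as tetrahedral-type decompositions in $\R^3$ when $n = 4$) using general position of the $p_j$'s: the finitely many hyperplanes bounding any non-fan cone arrangement at a single $p_i$ cannot simultaneously contain all the other $p_j$'s, so only fan cone partitions with axis $A$ can extend globally to a convex partition with every $p_j$ on every piece's boundary. A further convexity argument along $A$ ensures these local fans assemble into a global fan, closing the proof.
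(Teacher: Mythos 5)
Your proof hinges on the structural claim that every convex $n$-partition equipartitioning $\mu_1,\dots,\mu_{d-1}$ is a fan around $A$, with $p_i\in\partial C_l$ for all $i,l$ --- and that claim is false. Take $d=2$, $n=3$, and $\mu_1$ rotationally symmetric and supported on $B(p_1,\varepsilon)$ with $p_1=0$. The slab partition $C_1=\{x\le -w\}$, $C_2=\{-w\le x\le w\}$, $C_3=\{x\ge w\}$, with $w>0$ chosen so that $\mu_1(\{x\ge w\})=\tfrac13$, equiparts $\mu_1$ (by the $x\mapsto -x$ symmetry) but is not a fan: $p_1$ lies in $\int(C_2)$, not on $\partial C_2$. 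Your justification --- that $p_i\in\int(C_l)$ would force $\mu_i(C_{l'})=0$ for some $l'\ne l$ --- does not hold for measures with a fixed support: here $p_1\in\int(C_2)$ and yet $\mu_1(C_1)=\mu_1(C_3)=\tfrac13>0$. Inserting more parallel slabs gives counterexamples for every $d\ge 2$ and $n\ge 3$, so no choice of $p_j$'s in general position repairs this; the set of equipartitions is genuinely larger than the one-parameter circle of equal-angle fans.

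This defeats the counting step as written. The bound ``at most one shared $q_j$'' is a property of fans together with genericity in the angular coordinate; once non-fan partitions are admitted, a partition boundary consisting of several pieces of hyperplanes can be routed through several of your scattered $q_j$'s, and nothing in your setup caps the over-count at $n-1$, which is what the pigeonhole needs. The paper's construction sidesteps both problems by placing all $n(c-d)$ extra measures on a single line $\ell$ skew to $\operatorname{aff}\{p_1,\dots,p_{d-1}\}$: for \emph{any} convex $n$-partition the traces $C_i\cap\ell$ are intervals whose relative interiors are pairwise disjoint (each $C_i$ contains an open ball near every $p_j$, and two pieces sharing an open arc of $\ell$ would then have overlapping interiors), so $\ell$ has at most $n-1$ breakpoints and the one-dimensional count goes through regardless of the global shape of the partition --- in particular, the slab partitions above are handled correctly. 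Your $d=1$ case is correct and matches the paper's; to repair the $d\ge 2$ case you need the ``all $q_j$ on a line'' device and the interval-partition fact in place of the fan claim.
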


\medskip
The technique used in the proof of Theorem \ref{th:referee's} can be further utilized to determine the fraction of the measures obtained by a partition.
We prove the following extension of Theorem \ref{th:referee's}.

\begin{theorem}
\label{th:referee's - 02}
Let $d\geq 2$, $m\geq 2$, $n\geq 2$, and $c\geq d$ be integers.
If 
\[
m= n(c-d)+d,
\] 
then for every collection $\M=(\mu_1, \dots, \mu_m)$ of $m$ positive finite absolutely continuous measures on $\R^d$, there exists a partition of $\R^d$ into $n$ convex subsets $(C_1, \dots, C_n)$ such that each of the subsets $C_1,\dots,C_n$ has at least an
\[
\varepsilon = \frac{1}{n \Big((n-1) \big( \lceil \frac{c}{d} \rceil -1\big) +1\Big)}  \ge \frac{d}{c n^2}
\]  
fraction of at least $c$ of the measures $\mu_1, \dots, \mu_m$.
In other words,
\[
\# \big\{ j : 1\leq j\leq m,\,\mu_j(C_i)\geq \varepsilon\mu_j(\R^d) \big\} \geq c
\]
for every $1\leq i \leq n$. 

\end{theorem}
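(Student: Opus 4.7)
The plan is to apply Soberón's convex equipartition theorem a single time to $d$ super-measures obtained by partitioning $\{\mu_1,\ldots,\mu_m\}$ into $d$ disjoint groups of carefully chosen sizes and summing within each group, and then to do a sharp integer-valued pigeonhole count on each piece.

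Setup: write $t=\lceil c/d\rceil$ and $s=dt-c\in\{0,\ldots,d-1\}$, and set $B=n(t-1)+1$ and $f=(n-1)(t-1)+1$, so that $\varepsilon=\tfrac{1}{nf}$; rescale the $\mu_j$ so that $\mu_j(\R^d)=1$ for every $j$. A direct check gives $m=dB-sn$, so we may split $\{1,\ldots,m\}$ into $d-s$ groups of size $B$ and $s$ groups of size $B-n$ (the degenerate case $t=1$ forces $s=0$ and $B=1$, so the groups are singletons and the $B-n\le 0$ issue never arises).

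For each group $G_l$ set $\nu_l:=\sum_{j\in G_l}\mu_j$. By Soberón's theorem \cite{Soberon2012} applied to the $d$ absolutely continuous measures $\nu_1,\ldots,\nu_d$ on $\R^d$, there exists a convex partition $(C_1,\ldots,C_n)$ with $\nu_l(C_i)=|G_l|/n$ for all $i,l$. Now fix $i,l$ and let $r$ denote the number of $j\in G_l$ with $\mu_j(C_i)<\varepsilon$; if $r\ge 1$, the identity $\nu_l(C_i)=|G_l|/n$ combined with the trivial bounds $\mu_j(C_i)<\varepsilon$ (for the $r$ small measures) and $\mu_j(C_i)\le 1$ (for the rest) forces the strict inequality $r<|G_l|(n-1)/(n(1-\varepsilon))$. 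The elementary identity $nf-1=(n-1)B$ simplifies this bound to $r<|G_l|\,f/B$: for $|G_l|=B$ it equals the integer $f$, hence $r\le f-1$ and at least $B-(f-1)=t$ of the $\mu_j$ in $G_l$ satisfy $\mu_j(C_i)\ge\varepsilon$; for $|G_l|=B-n$ it equals $(n-1)(t-2)+1-\tfrac{1}{B}$, hence $r\le(n-1)(t-2)$ and at least $(B-n)-(n-1)(t-2)=t-1$ such measures appear. Summing across all $d$ groups gives $(d-s)t+s(t-1)=dt-s=c$ good measures in every piece $C_i$, which is the desired conclusion.

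The main obstacle, and the reason for the exact value of $\varepsilon$, is the sharp integer structure of the pigeonhole in the previous paragraph. The choice $B=n(t-1)+1$ is precisely what makes $|G_l|(n-1)/(n(1-\varepsilon))=|G_l|f/B$ a positive integer for the large groups, which upgrades the bound $r<f$ to $r\le f-1$ via the strict inequality coming from $r\ge 1$; the complementary size $B-n$ is the unique choice for which the fractional loss is only $1/B$, so the floor of the bound still equals $(n-1)(t-2)$. Any larger $\varepsilon$ or any unbalanced grouping loses at least one unit in one of the $d$ groups and drops the total count below $c$. Once this arithmetic is in place the argument is elementary and uses only Soberón's convex equipartition, in line with the paper's remark that the technique of Theorem~\ref{th:referee's} extends to control the fractions.
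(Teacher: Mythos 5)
Your proof is correct and takes essentially the same route as the paper: you split the $m$ measures into $d$ groups of sizes $n(\lceil c/d\rceil-1)+1$ and $n(\lfloor c/d\rfloor-1)+1$ (exactly the paper's $m_k=n(r_k-1)+1$ with $r_k\in\{\lfloor c/d\rfloor,\lceil c/d\rceil\}$ and $\sum_k r_k=c$), apply Sober\'on's equipartition theorem to the normalized group sums, and finish with an integer pigeonhole count on each piece. The only cosmetic difference is that you run the pigeonhole directly with the uniform threshold $\varepsilon$ via a strict-inequality count, whereas the paper uses group-dependent thresholds $\varepsilon_k\geq\varepsilon$ (its Claim 4.1) and then takes the minimum.
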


Furthermore,  we prove that in some situations a fraction of measure each convex piece of a partition contains can be prescribed in advance.  
First, observe that if all the measures are equal we cannot hope to get more than a fraction of $\frac{1}{n}$ in many measures for each convex piece, (the smallest part prohibits this).  
Nevertheless, we prove that we can get as close as $\alpha = \frac1{n}$ as we want, provided we pay the price of using more measures.

\begin{theorem}
\label{th:referee's - 03}
Let $d\geq 2$, $m\geq 2$, $n\geq 2$, and $c\geq 2d$ be integers, and let $0<\alpha<\frac1{n}$ be a real number.
If 
\[
m \ge (c-d)\Big( \frac{1-\alpha}{\frac{1}{n}-\alpha}\Big) +d-1,
\] 
then for every collection $\M=(\mu_1, \dots, \mu_m)$ of $m$ positive finite absolutely continuous measures on $\R^d$, there exists a partition of $\R^d$ into $n$ convex subsets $(C_1, \dots, C_n)$ such that each of the subsets $C_1,\dots,C_n$ has at least $\alpha$ fraction in at least $c$ of the measures $\mu_1, \dots, \mu_m$.
In other words,
\[
\# \big\{ j : 1\leq j\leq m,\,\mu_j(C_i)\geq \alpha\mu_j(\R^d) \big\} \geq c
\]
for every $1\leq i \leq n$. 
Moreover, if $\frac{1-\alpha}{\frac{1}{n}-\alpha}$ is an integer, then it suffices to have
\[
m \ge (c-d)\Big( \frac{1-\alpha}{\frac{1}{n}-\alpha}\Big),
\]
measures.
\end{theorem}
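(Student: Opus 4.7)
The plan is to follow the strategy of Theorem~\ref{th:referee's}: apply Sober\'on's convex equipartition theorem to $d$ carefully chosen measures, and then deduce the $\alpha$-fraction condition by a counting argument tuned to the parameter $t^{\ast} := (1-\alpha)/(\tfrac{1}{n}-\alpha)$. After rescaling so that $\mu_j(\R^d)=1$ for every $j$, note that $t^{\ast} > n \geq 2$ (because $\alpha>0$) and the hypothesis reads $m \geq (c-d)t^{\ast} + d-1$.

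For the main bound, form $\mu := \mu_d + \cdots + \mu_m$ and apply Sober\'on's theorem to the $d$ measures $\mu_1,\ldots,\mu_{d-1},\mu$, obtaining a convex partition $(C_1,\ldots,C_n)$ that equiparts all of them. Each piece $C_i$ then satisfies $\mu_k(C_i) = 1/n > \alpha$ for $1 \leq k \leq d-1$, contributing $d-1$ measures ``for free''. Set $s_i := \#\{j \in \{d,\ldots,m\} : \mu_j(C_i) \geq \alpha\}$. Splitting $\mu(C_i) = (m-d+1)/n$ according to whether $\mu_j(C_i)\geq\alpha$ or $<\alpha$, and using $\mu_j(C_i)\leq 1$ on the first set,
\[
\frac{m-d+1}{n} \;\leq\; s_i + (m-d+1-s_i)\,\alpha,
\]
which rearranges to $s_i \geq (m-d+1)/t^{\ast} \geq c-d$. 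When the set $\{j : \mu_j(C_i)<\alpha\}$ is nonempty the inequality is strict, pushing $s_i$ up to $\geq c-d+1$; when it is empty, $s_i = m-d+1 \geq (c-d)t^{\ast} \geq c-d+1$ because $(c-d)(t^{\ast}-1) \geq 1$. Together with the $d-1$ free measures, each piece has at least $c$ measures with $\geq \alpha$-fraction.

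For the moreover part, assume $t^{\ast}$ is an integer and $m = (c-d)t^{\ast}$. The idea is to distribute the $m$ measures into $d$ disjoint blocks $G_1,\ldots,G_d$ with $|G_\ell| = a_\ell t^{\ast}$, where $a_1,\ldots,a_d$ are positive integers with $a_1+\cdots+a_d = c-d$; this choice is possible precisely because $c \geq 2d$. Let $\nu_\ell := \sum_{j \in G_\ell}\mu_j$ and apply Sober\'on to the $d$ measures $\nu_1,\ldots,\nu_d$ to obtain a convex equipartition with $\nu_\ell(C_i) = a_\ell t^{\ast}/n$. The same counting argument inside each block gives $s_{i,\ell} := \#\{j \in G_\ell : \mu_j(C_i)\geq\alpha\} \geq a_\ell$, and because $a_\ell$ is an integer the strict inequality forced by any index with $\mu_j(C_i)<\alpha$ upgrades this to $s_{i,\ell}\geq a_\ell+1$; the degenerate alternative $s_{i,\ell}=a_\ell t^{\ast}$ satisfies the same bound since $t^{\ast}\geq 2$. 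Summing, $s_i = \sum_\ell s_{i,\ell} \geq \sum_\ell (a_\ell+1) = (c-d)+d = c$.

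The main technical point throughout is the interaction between the strict counting inequality and integrality: in the main bound a single unit of slack bridges $c-d$ to $c-d+1$, and in the moreover bound the integrality of each ratio $a_\ell = |G_\ell|/t^{\ast}$ converts that slack into a full $+1$ per block, accounting exactly for the $d-1$ measures saved relative to the main bound. A secondary subtlety is realizing the block sizes $a_\ell t^{\ast}$ as positive integers whose sum is $m$, which is guaranteed precisely by the hypothesis $c \geq 2d$.
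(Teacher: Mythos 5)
Your proposal is correct. The ``moreover'' part is essentially the paper's own argument: with $r_\ell := a_\ell + 1$ your choice of positive integers $a_1,\dots,a_d$ summing to $c-d$ matches their choice of $r_1,\dots,r_d\geq 2$ summing to $c$, the block sizes $a_\ell t^{\ast}$ are their $m_\ell=(r_\ell-1)t^{\ast}$, and your inline strict-inequality count reproduces their pigeonhole lemma (Claim~\ref{claim}). For the main bound, however, you deviate from the paper in a genuinely different way. The paper keeps the same $d$-block structure with $m_\ell=\lceil(r_\ell-1)t^{\ast}\rceil$, shows $\sum_\ell m_\ell\leq m$ from the ceiling estimate $m_\ell<(r_\ell-1)t^{\ast}+1$, and invokes Claim~\ref{claim} in each block to get $r_\ell$ good measures. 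You instead take $d-1$ singleton blocks $\mu_1,\dots,\mu_{d-1}$, each trivially yielding one measure at fraction $1/n>\alpha$, plus one catch-all block $\mu=\mu_d+\cdots+\mu_m$ of size $m-d+1\geq(c-d)t^{\ast}$, from which the same counting argument (with a case split between ``strictness from some $\mu_j(C_i)<\alpha$'' and ``$J^-=\emptyset$'') extracts the remaining $c-d+1$ good measures. Your decomposition avoids the ceiling bookkeeping entirely and makes the role of the ``$+(d-1)$'' in the hypothesis transparent (those are exactly the free singleton measures), at the cost of doing the strict-inequality pigeonhole inline rather than once in a clean lemma; the paper's uniform framework handles both the main and moreover bounds with a single mechanism. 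Both proofs rest on Sober\'on's convex equipartition theorem applied to $d$ auxiliary measures plus a counting argument, so the essential engine is shared.
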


The reader may notice that, since $\frac{1-\alpha}{\frac{1}{n}-\alpha} > n$ if $\alpha > 0$, if we make $\alpha \to 0$ we recover the same number of measures as Theorem \ref{thm:geometric} and Theorem \ref{th:referee's} for $c \ge 2d$.  In Theorem \ref{thm:geometric} we have a stronger control on the kind of partitions obtained and in Theorem \ref{th:referee's} we can also guarantee the equipartition of $d-1$ measures.
 
\medskip

The rest of the paper is organized as follows. 
The proofs of Theorem \ref{thm:one measure} and Theorem \ref{thm:sum of measures} run in parallel and follow CS/TM schemes that are given in Section \ref{sec:From partitions to equivariant topology}. 
The topological results about non-existence of equivariant maps are proved in Section \ref{sec:Non-existence of equivariant maps}. 
Finally, the proofs of theorems \ref{thm:geometric}, \ref{thm:one measure}, \ref{thm:sum of measures}, \ref{th:referee's}, \ref{thm:optimal}, \ref{th:referee's - 02} and \ref{th:referee's - 03} are given in Section \ref{sec:proofs}.
Note that the proofs of theorems \ref{thm:geometric}, \ref{th:referee's}, \ref{thm:optimal}, \ref{th:referee's - 02}, and \ref{th:referee's - 03} can be read independently of the previous sections.

\subsection*{Acknowledgement} 
We are grateful to Jonathan Kliem, Johanna K. Steinmeyer and Roman Karasev for many useful observations and suggestions.

\section{Existence of a partition from non-existence of a map}
\label{sec:From partitions to equivariant topology}

In this section we develop CS/TM schemes that relate the existence of convex partitions from theorems \ref{thm:one measure} and \ref{thm:sum of measures} with the non-existence of particular equivariant maps. These two CS/TM schemes are very similar to each other.

\subsection{Existence of an equipartition of one measure from non-existence of a map}
\label{sec:From partitions to equivariant topology I}

Let $d\geq 2$, $m\geq 2$, $n\geq 1$, and $c\geq 2$ be integers, and let $\M=(\mu_1, \dots, \mu_m)$ be a collection of finite absolutely continuous measures on $\R^d$.
Throughout the paper we assume that $m \geq c$, since it is a requirement that naturally comes from the mass partition problem.
Following notation from \cite{Blagojevic2014}, let $\emp(\mu_m,n)$ denote the space of all convex partitions of $\R^d$ into $n$ convex pieces $(C_1, \dots, C_n)$ that equipart the measure $\mu_m$, as studied in \cite{Leon2015}, that is
\[
\mu_m(C_1) = \dots = \mu_m(C_n) = \frac{1}{n} \mu_m(\R^d).
\]

Now define a continuous map $f_{\M}\colon \emp(\mu_m,n) \longrightarrow \R^{(m-1) \times n}\cong (\R^{m-1})^n$ as 
\[
(C_1,\dots,C_n) \longmapsto
\left(
\begin{matrix}
\mu_1(C_1) & \mu_1(C_2) & \dots & \mu_1(C_n) \\
\mu_2(C_1) & \mu_2(C_2) & \dots & \mu_2(C_n) \\
\vdots     & \vdots     & \ddots & \vdots \\
\mu_{m-1}(C_1) & \mu_{m-1}(C_2) & \dots & \mu_{m-1}(C_n) \\
\end{matrix} \right).
\]
The symmetric group $\sym_n$ acts on $\emp(\mu_m,n)$ and $(\R^{m-1})^n$ as follows
\[
\pi\cdot (C_1,\ldots,C_n) = (C_{\pi(1)},\ldots,C_{\pi(n)})
\qquad\text{and}\qquad
\pi\cdot (Y_1,\ldots,Y_n)= (Y_{\pi(1)},\ldots,Y_{\pi(n)}),
\]
where $(C_1,\ldots,C_n)\in \emp(\mu_m,n)$, $(Y_1,\ldots,Y_n)\in (\R^{m-1})^n$, and $\pi\in\sym_n$.
These actions are introduced in such a way that the map $f_{\M}$ becomes an $\sym_n$-equivariant map.
The image of the map $f_{\M}$ is a subset of the affine set $V \subseteq \R^{(m-1) \times n}\cong (\R^{m-1})^n$ given by 
\[
V= \Big\{ (y_{jk})   \in \R^{(m-1) \times n} \ : \
\sum_{k=1}^n y_{jk} = \mu_j(\R^d)\textrm{ for every }1\leq j\leq m-1 \Big\} \cong \R^{(m-1)\times(n-1)}.	
\]
Consequently, we can assume that $f_{\M}\colon \emp(\mu_m,n) \longrightarrow V\subseteq \R^{(m-1) \times n}$.

\medskip
Let $1\leq i\leq n$, and let $I \subseteq [m-1]$ be a subset of cardinality $|I|=m-c+1$, where $[m-1]$ denotes the set of integers $\{1,2,\dots, m-1\}$.
Consider  the subspace $L_{i,I}$ of $V$ given by 
\[
 L_{i,I}: = \big\{ (y_{jk})   \in V : y_{r,i}=0 \textrm{ for every } r \in I \big\},
\]
and the associated arrangement 
\begin{eqnarray}
\label{eq:A one measure}
\A =\A(m,n,c):= \left\lbrace L_{i,I} : 1\leq i \leq n, \ I \subseteq [m-1], \ |I|=m-c+1 \right\rbrace.
\end{eqnarray}
The arrangement $\A$ is an $\sym_n$-invariant affine arrangement in $\R^{(m-1) \times n}$, meaning that $\pi\cdot  L_{i,I}\in\A$ for every $\pi\in\sym_n$.
Now we explain the key property of the arrangement $\A$.
Let $(C_1, \dots, C_n)$ be a convex partition of $\R^d$ tat equiparts $\mu_m$ with a property that at least one of the subsets $C_1, \dots, C_n$ has positive measure with respect to at most $c-1$ of the measures $\mu_1, \dots, \mu_m$, which means that $(C_1, \dots, C_n)$ is not a partition we are searching for. 
Since, by construction $\mu_m(C_i)>0$ for every $1 \leq i \leq n$, it follows that at least one of the subsets $C_1, \dots, C_n$ has positive measure with respect to at most $c-2$ of the measures $\mu_1, \dots, \mu_{m-1}$.
Then there is a column of the matrix $f_{\M}(C_1, \dots, C_n)\in V\subseteq \R^{(m-1)\times n}$ with at most $c-2$ positive coordinates. 
In other words, there is a column of the matrix $f_{\M}(C_1, \dots, C_n)$ with at least $m-c+1$ zeros, and consequently the matrix $f_{\M}(C_1, \dots, C_n)$ is an element of the union $\bigcup \A:=\bigcup_{L_{i,I} \in \A} L_{i,I}$ of the arrangement $\A$.

\medskip
Let us now assume that for integers $d\geq 2$, $m\geq 2$, $n\geq 1$, and $c\geq 1$, there exists a collection $\M=(\mu_1, \dots, \mu_m)$ of absolutely continuous positive finite measures in~$\R^d$ such that in every convex partition $(C_1, \dots, C_n)$ of $\R^d$ that equiparts $\mu_m$ there is at least one subset $C_k$ that does not have positive measure with respect to at least $c$ measures, or equivalently it has measure zero with respect to at least $m-c+1$ of the measures $\mu_1, \dots, \mu_m$.
 Consequently, $f_{\M}(C_1, \dots, C_n) \in \bigcup \A$ for every convex partition $(C_1, \dots, C_n)$ of $\R^d$ that equiparts the measure $\mu_m$. In particular, this means that the $\sym_n$-equivariant map $f_{\M}$ factors as follows
 \[
 \xymatrix{
 \emp(\mu_m,n)\ar[rr]^-{f_{\M}}\ar[rd]_{f_{\M}'} & & V\\
  & \bigcup \A(m,n,c) , \ar[ru]_-{i} &
 }
 \] 
 where $i\colon \bigcup \A\longrightarrow V$ is the inclusion and $f_{\M}'\colon\emp(\mu_m,n)\longrightarrow \bigcup\A$ is the $\sym_n$-equivariant map obtained from $f_{\M}$ by restricting the codomain.
Thus, we have proved the following theorem. 

\begin{theorem}
	\label{thm : CS-TM scheme one measure}
	Let $d\geq 2$, $m\geq 2$, $n\geq 1$, and $c\geq 2$ be integers, and let $\M=(\mu_1, \dots , \mu_m)$ be be a collection of absolutely continuous positive finite measures on $\R^d$.
	If there is no $\sym_n$-equivariant map
	\[
	  \emp(\mu_m,n) \longrightarrow \bigcup\A(m,n,c),
	\]
	then there exists a convex partition $(C_1, \dots, C_n)$ of $\R^d$ that equiparts the measure $\mu_m$ with the additional property that  each of the  subsets $C_i$ has positive measure with respect to at least $c$ of the measures $\mu_1, \dots, \mu_m$, that is 
\[
\mu_m(C_1) = \dots = \mu_m(C_n) = \frac{1}{n} \mu_m(\R^d), 
\]
and
\[
\# \big\{ j : 1\leq j\leq m,\,\mu_j(C_i)>0\big\} \geq c
\] 
for every $1\leq i \leq n$.
\end{theorem}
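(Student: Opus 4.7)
The plan is to prove the theorem by contraposition, essentially by repackaging the construction developed in this subsection prior to the statement. I would assume that no convex partition of $\R^d$ satisfying both the $\mu_m$-equipartition and the ``$c$ positive measures per piece'' conditions exists, and derive from this a continuous $\sym_n$-equivariant map $\emp(\mu_m,n)\to\bigcup\A(m,n,c)$, contradicting the hypothesis.

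First, I would set up the evaluation map $f_{\M}\colon \emp(\mu_m,n)\to V\subseteq\R^{(m-1)\times n}$ defined by $(C_1,\dots,C_n)\mapsto(\mu_j(C_k))_{j\in[m-1],\,k\in[n]}$. The prescribed actions of $\sym_n$ (permuting the ordered parts of a partition and permuting the columns of a matrix) make $f_{\M}$ equivariant, and the identity $\sum_{k=1}^n\mu_j(C_k)=\mu_j(\R^d)$ places its image inside the affine subspace $V$. Absolute continuity of each $\mu_j$, together with the standard topology on $\emp(\mu_m,n)$ from \cite{Blagojevic2014,Leon2015}, ensures that $f_{\M}$ is continuous.

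Next, I would translate the negated conclusion into a geometric statement about the image of $f_{\M}$. If some partition $(C_1,\dots,C_n)\in\emp(\mu_m,n)$ fails the ``$c$ positive measures per piece'' condition, there is an index $i$ such that at most $c-1$ of the measures $\mu_1,\dots,\mu_m$ are positive on $C_i$. Since $\mu_m(C_i)=\tfrac{1}{n}\mu_m(\R^d)>0$, at most $c-2$ of $\mu_1,\dots,\mu_{m-1}$ are positive on $C_i$, so at least $m-c+1$ of them vanish there. Choosing any $I\subseteq[m-1]$ of size $m-c+1$ witnessing this vanishing, the $i$-th column of $f_{\M}(C_1,\dots,C_n)$ is zero on the rows indexed by $I$, which means $f_{\M}(C_1,\dots,C_n)\in L_{i,I}\subseteq\bigcup\A(m,n,c)$.

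Under the contrapositive hypothesis this containment holds for every $(C_1,\dots,C_n)\in\emp(\mu_m,n)$, so $f_{\M}$ corestricts to a continuous $\sym_n$-equivariant map $f_{\M}'\colon\emp(\mu_m,n)\to\bigcup\A(m,n,c)$, contradicting the assumed nonexistence of such a map. The only step that requires care is the continuity of $f_{\M}$: one needs the topology on $\emp(\mu_m,n)$ to be fine enough that small perturbations of an equipartition yield small perturbations of the measures of its parts, which is exactly what the parametrization from \cite{Blagojevic2014,Leon2015} provides. The real work --- namely proving that no such equivariant map exists --- is deferred to the topological arguments in Section \ref{sec:Non-existence of equivariant maps}.
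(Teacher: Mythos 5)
Your proof is correct and follows essentially the same approach as the paper: the paper develops precisely this contrapositive argument in Section~\ref{sec:From partitions to equivariant topology I}, defining $f_{\M}$, observing its image lies in $V$, and showing that failure of the conclusion for every partition forces $f_{\M}$ to corestrict to an $\sym_n$-equivariant map into $\bigcup\A(m,n,c)$. Your added remark about why $f_{\M}$ is continuous (via absolute continuity of the measures and the topology from \cite{Blagojevic2014,Leon2015}) is a sensible clarification of a point the paper takes for granted.
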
 

\subsection{Existence of an equipartition of the sum of  measures from non-existence of a map}
\label{sec:From partitions to equivariant topology II}

As we have already mentioned, the CS/TM scheme needed for proving Theorem \ref{thm:sum of measures} is very similar to the one presented in Section \ref{sec:From partitions to equivariant topology I}. 
Nevertheless, it will be developed separately here.

\medskip
Let $d\geq 2$, $m\geq 2$, $n\geq 1$, and $c\geq 2$ be integers, and let $\M=(\mu_1, \dots, \mu_m)$ be a collection of absolutely continuous positive finite measures on $\R^d$. 
Denote by $\mu$ the sum of the measures $\mu_1, \dots, \mu_m$, that is $\mu := \sum_{j=1}^m \mu_j$.

Similarly as in Section \ref{sec:From partitions to equivariant topology I}, we define a continuous map $\widetilde{f}_{\M}\colon \emp(\mu,n) \longrightarrow \R^{m \times n}$ as 
\[
(C_1,\dots,C_n) \longmapsto
\left(
\begin{matrix}
\mu_1(C_1) & \mu_1(C_2) & \dots & \mu_1(C_n) \\
\mu_2(C_1) & \mu_2(C_2) & \dots & \mu_2(C_n) \\
\vdots     & \vdots     & \ddots & \vdots \\
\mu_{m}(C_1) & \mu_{m}(C_2) & \dots & \mu_{m}(C_n) \\
\end{matrix} \right),
\]
where the domain of the map $\widetilde{f}_{\M}$ is the space of all convex partitions of $\R^d$ that equipart the measure $\mu$. 
The map $\widetilde{f}_{\M}$ is $\sym_n$-equivariant by construction.
Furthermore, the image of the map $\widetilde{f}_{\M}$ is a subset of the affine set $\widetilde{V} \subseteq \R^{m \times n}$ given by 
\begin{eqnarray*}
\widetilde{V}= \Big\{ (y_{jk}) \in \R^{m \times n} \ : \
& \sum_{k=1}^n y_{jk} = \mu_j(\R^d)& \textrm{ for every }1\leq j\leq m,\\
& \sum_{j=1}^m y_{jk} = \frac{1}{n} \mu(\R^d)& \textrm{ for every } 1\leq k \leq n \Big\}.	
\end{eqnarray*}

\medskip
Now we define an affine arrangement that resembles the arrangement $\A$ from Section \ref{sec:From partitions to equivariant topology I}.
Let $1\leq i\leq n$, and let $I \subseteq [m]$ be a subset of cardinality $|I|=m-c+1$.
Consider  the subspace $\widetilde{L}_{i,I}$ of $\widetilde{V}$ given by 
\[
 \widetilde{L}_{i,I}: = \big\{ (y_{jk})   \in \widetilde{V} : y_{r,i}=0 \textrm{ for every } r \in I \big\},
\]
and the associated $\sym_n$-invariant arrangement 
\begin{eqnarray}
\label{eq:A sum of measures}
\widetilde{\A} =\widetilde{\A}(m,n,c):= \left\lbrace \widetilde{L}_{i,I} : 1\leq i \leq n, \ I \subseteq [m], \ |I|=m-c+1 \right\rbrace.
\end{eqnarray}

Following the steps from Section \ref{sec:From partitions to equivariant topology I}, we study the key property of the arrangement $\widetilde{\A}$. Let $(C_1, \dots, C_n)$ be a convex partition of $\R^d$ that does not satisfy the property asked in Theorem \ref{thm:sum of measures}. 
More precisely, assume that for some $i$ the subset $C_i$ has positive measure with respect to at most $c-1$ of the measures $\mu_1, \dots, \mu_m$. 
This means that the $i$-th column of the matrix $\widetilde{f}_{\M}(C_1, \dots, C_n) \in \R^{m \times n}$ has at least $m-c+1$ zeros. 
In other words, $\widetilde{f}_{\M}(C_1, \dots, C_n) \in \bigcup\widetilde{\A}$. Therefore, we have obtained the following theorem.

\begin{theorem}
	\label{thm : CS-TM scheme sum of measures}
	Let $d\geq 2$, $m\geq 2$, $n\geq 1$, and $c\geq 2$ be integers, and let $\M=(\mu_1, \dots ,\mu_m)$ be a collection of absolutely continuous positive finite measures on $\R^d$.
	If there is no $\sym_n$-equivariant map
	\[
	  \emp(\mu,n) \longrightarrow \bigcup\widetilde{\A}(m,n,c),
	\]
	then there exists a convex partition $(C_1, \dots, C_n)$ of $\R^d$ that eqiparts the measure $\mu = \mu_1 + \dots +\mu_m$ with the additional property that  each of the  subsets $C_i$ has positive measure with respect to at least $c$ of the measures $\mu_1, \dots, \mu_m$, that is 
\[
\mu(C_1) = \dots = \mu(C_n) = \frac{1}{n} \mu(\R^d),
\]
and
\[
\# \big\{ j : 1\leq j\leq m,\,\mu_j(C_i)>0\big\} \geq c
\] 
for every $1\leq i \leq n$.
\end{theorem}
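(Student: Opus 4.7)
The plan is to prove the contrapositive, following essentially the discussion that precedes the statement: assume no convex partition with the two desired properties exists, and use this assumption to construct an $\sym_n$-equivariant map $\emp(\mu,n)\longrightarrow\bigcup\widetilde{\A}(m,n,c)$. This will contradict the hypothesis and so establish the theorem.

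First, I would verify that the map
\[
\widetilde{f}_{\M}\colon \emp(\mu,n) \longrightarrow \R^{m\times n}, \qquad (C_1,\dots,C_n)\longmapsto \big(\mu_j(C_i)\big)_{j,i},
\]
introduced above the statement is well defined and continuous on $\emp(\mu,n)$ with respect to the topology on the space of convex equipartitions used in the companion paper \cite{Leon2015} (under which continuity of $(C_1,\dots,C_n)\mapsto \mu_j(C_i)$ follows from absolute continuity of the $\mu_j$'s). Since $\widetilde{f}_{\M}$ is given by permuting the columns of the matrix according to the permutation of the pieces, it is $\sym_n$-equivariant for the $\sym_n$-actions specified on $\emp(\mu,n)$ and on $\R^{m\times n}$. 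Moreover, because each $(C_1,\dots,C_n)$ is a partition of $\R^d$ and equiparts $\mu$, the row sums of $\widetilde{f}_{\M}(C_1,\dots,C_n)$ are $\mu_j(\R^d)$ and the column sums are $\tfrac{1}{n}\mu(\R^d)$, so the image of $\widetilde{f}_{\M}$ lies in the affine subspace $\widetilde{V}\subseteq\R^{m\times n}$.

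Next comes the key step. Assume, toward a contradiction, that there is no convex partition $(C_1,\dots,C_n)\in \emp(\mu,n)$ such that each piece has positive measure with respect to at least $c$ of the measures $\mu_1,\dots,\mu_m$. Then for every $(C_1,\dots,C_n)\in \emp(\mu,n)$ some index $i\in\{1,\dots,n\}$ satisfies
\[
\#\big\{ j : 1\leq j\leq m,\ \mu_j(C_i)>0\big\}\leq c-1,
\]
which is equivalent to saying that the $i$-th column of $\widetilde{f}_{\M}(C_1,\dots,C_n)$ has at least $m-c+1$ zero entries. Choosing any index set $I\subseteq[m]$ of size exactly $m-c+1$ contained in the set of zero rows of this column exhibits $\widetilde{f}_{\M}(C_1,\dots,C_n)\in \widetilde{L}_{i,I}\subseteq \bigcup\widetilde{\A}(m,n,c)$.

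Thus $\widetilde{f}_{\M}$ factors through the inclusion $\bigcup\widetilde{\A}(m,n,c)\hookrightarrow \widetilde{V}$ as a continuous map
\[
\widetilde{f}_{\M}'\colon \emp(\mu,n) \longrightarrow \bigcup\widetilde{\A}(m,n,c),
\]
and since the subspaces $\widetilde{L}_{i,I}$ are permuted among themselves by the $\sym_n$-action (the action sends $\widetilde{L}_{i,I}$ to $\widetilde{L}_{\pi(i),I}$), the subspace arrangement $\bigcup\widetilde{\A}$ is $\sym_n$-invariant; hence $\widetilde{f}_{\M}'$ inherits the $\sym_n$-equivariance of $\widetilde{f}_{\M}$. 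This contradicts the standing hypothesis that no such equivariant map exists, completing the proof.

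The only genuinely nonroutine point is the continuity of $\widetilde{f}_{\M}$, which requires specifying the topology on $\emp(\mu,n)$; everything else is purely a book-keeping observation about the matrix of measures and the combinatorial description of $\widetilde{\A}(m,n,c)$, so I expect no real technical obstacle in the argument.
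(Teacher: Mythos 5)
Your proposal is correct and follows essentially the same argument as the paper: the statement is exactly the contrapositive of the observation, made while constructing the CS/TM scheme, that any equipartition of $\mu$ failing the positivity condition is mapped by $\widetilde{f}_{\M}$ into $\bigcup\widetilde{\A}(m,n,c)$, so that a universal failure would let the equivariant map $\widetilde{f}_{\M}$ factor through the arrangement. The only cosmetic discrepancy is the indexing of the permuted subspaces (the column action sends $\widetilde{L}_{i,I}$ to $\widetilde{L}_{\pi^{-1}(i),I}$ rather than $\widetilde{L}_{\pi(i),I}$), which does not affect the $\sym_n$-invariance of the arrangement.
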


\section{Non-existence of the equivariant maps}
\label{sec:Non-existence of equivariant maps}

This section is devoted to the proof of (non)-existence of equivariant maps from the space of regular convex partitions to appropriate affine arrangements. 
In Section \ref{sec:Non-existence of equivariant maps I} we consider the existence of an $\sym_n$-equivariant maps $ \emp(\mu_m,n) \longrightarrow \bigcup\A(m,n,c)$, whereas in Section \ref{sec:Non-existence of equivariant maps II} we focus on the existence $\sym_n$-equivariant maps $ \emp(\mu,n) \longrightarrow \bigcup\widetilde{\A}(m,n,c)$ will be considered for different values of integer parameters $d$, $m$, $n$ and $c$.

\subsection{Non-existence of an $\sym_n$-equivariant map $\emp(\mu_m,n) \longrightarrow \bigcup\A(m,n,c)$}
\label{sec:Non-existence of equivariant maps I}

In order to prove the (non-)existence of an $\sym_n$-equivariant map
\[
	  \emp(\mu_m,n) \longrightarrow \bigcup\A(m,n,c),
\]
we first construct various equivariant maps and prove a few auxiliary lemmas.
In the following we use particular tools from the theory of homotopy colimits; for further details on these methods consult for example \cite{Bousfield1972}, \cite{Welker1999}, or \cite{Sundaram1997}.

\medskip
 Let $X$ be a topological space and let $n\geq 1$ be an integer. 
The {\em ordered configuration space} $\conf(X,n)$ of $n$ ordered pairwise distinct points of $X$ is the space
\begin{eqnarray*}
\conf(X,n):= \{ (x_1, \dots, x_n) \in X^n \mid x_i \neq x_j \textrm{ for all } 1\leq i< j \leq n\}.
\end{eqnarray*}
It was shown in \cite[Sec.\,2]{Blagojevic2014} that a subspace of $\emp(\mu_m,n)$ consisting only of regular convex partitions can be parametrized by the configuration space $\conf(\R^d,n)$.
In particular, we have the following lemma.

\begin{lemma}
	\label{lemma : alpha}
	There exists an $\sym_n$-equivariant map
	\[
	\alpha\colon \conf(\R^d,n) \longrightarrow \emp(\mu_m,n).
	\]
\end{lemma}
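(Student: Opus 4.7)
The plan is to construct $\alpha$ via \emph{generalized Voronoi} (power) diagrams. For a configuration $\xx=(x_1,\dots,x_n)\in\conf(\R^d,n)$ and a weight vector $w=(w_1,\dots,w_n)\in\R^n$, I would set
\[
C_i(\xx,w)\ :=\ \big\{y\in\R^d : \|y-x_i\|^2 - w_i \leq \|y-x_j\|^2 - w_j\ \text{for all } j\big\}.
\]
Each $C_i(\xx,w)$ is a closed convex polyhedron and the $n$-tuple $(C_1(\xx,w),\dots,C_n(\xx,w))$ forms a convex partition of $\R^d$ in the sense of Definition \ref{def:partition} as soon as all cells have nonempty interior. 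The map $\alpha$ is then $\xx\mapsto(C_1(\xx,w(\xx)),\dots,C_n(\xx,w(\xx)))$ for a suitably chosen weight function $w(\xx)$, so the real content is the construction of $w(\xx)$.

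\medskip
The analytic core is the classical semi-discrete optimal transport fact: for each $\xx\in\conf(\R^d,n)$ there exists a weight vector $w=w(\xx)$, unique up to adding a common constant $(t,\dots,t)$, such that $\mu_m(C_i(\xx,w))=\tfrac1n\mu_m(\R^d)$ for every $1\leq i\leq n$. I would produce this $w$ as the unique critical point (modulo translation) of a strictly convex Kantorovich-type functional on $\R^n$: absolute continuity of $\mu_m$ together with pairwise distinctness of the $x_i$ guarantee that its Hessian is nondegenerate on the hyperplane $H=\{\sum w_i=0\}$ and that the functional is coercive there. Fixing the normalization $w(\xx)\in H$ pins it down uniquely, and the implicit function theorem applied to the gradient equation shows that $w(\xx)$ depends continuously on $\xx$. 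Since each cell has positive $\mu_m$-mass and $\mu_m$ is absolutely continuous, each $C_i(\xx,w(\xx))$ has positive Lebesgue measure, hence nonempty interior; thus $\alpha(\xx)\in\emp(\mu_m,n)$. This is essentially the construction carried out in \cite[Sec.\,2]{Blagojevic2014}, so one can alternatively just quote that reference.

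\medskip
Equivariance would then follow formally from uniqueness. For $\pi\in\sym_n$ the permuted weight $(w_{\pi(1)}(\xx),\dots,w_{\pi(n)}(\xx))$ satisfies the same equipartition conditions and the same normalization as $w(\pi\cdot\xx)$; uniqueness forces $w(\pi\cdot\xx)=\pi\cdot w(\xx)$. Consequently $C_i(\pi\cdot\xx,w(\pi\cdot\xx))=C_{\pi(i)}(\xx,w(\xx))$, and therefore $\alpha(\pi\cdot\xx)=\pi\cdot\alpha(\xx)$, as required.

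The principal obstacle is the existence, uniqueness, and continuity of $w(\xx)$, i.e.\ the strict convexity and coercivity of the underlying Kantorovich functional modulo translation. Both features rely essentially on $\mu_m$ being absolutely continuous (so no positive-mass hyperplanes can skew cell boundaries) and on the sites $x_i$ being pairwise distinct (which is exactly the defining condition of $\conf(\R^d,n)$, and which is what forces nondegeneracy of the Hessian). Once these analytic facts are in place, everything else, including equivariance and continuity of $\alpha$, is a formal consequence of the symmetric nature of the power-diagram construction.
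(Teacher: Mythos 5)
Your proposal is correct and is essentially the same approach as the paper takes: the paper simply cites \cite[Sec.\,2]{Blagojevic2014} for the parametrization of regular (power-diagram) equipartitions by $\conf(\R^d,n)$, and your power-diagram/optimal-transport construction with equivariance via uniqueness of the (normalized) weight vector is precisely the construction carried out there.
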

 
\medskip
Let $P:=P(\A)$ denote the intersection poset of the arrangement $\A=\A(m,n,c)$, ordered by the reverse inclusion.
The elements of the poset $P$ are non-empty intersections of subspaces in $\A$, thus they are of the form 
\[
p_\Lambda := \bigcap_{(i,I) \in \Lambda} L_{i,I} =\big\{ (y_{jk})  \in V\subseteq \R^{(m-1) \times n} \ : \ y_{ji}=0, \textrm{ for all }1\leq i\leq n\textrm{ and }  j \in I_i\big\},
\]
where $\Lambda \subseteq [n] \times \binom{[m-1]}{m-c+1}$ and $I_i := \bigcup_{(i,I)\in \Lambda} I$.
Observe that sets $I_i$ can be empty.
Alternatively, each poset element $p_\Lambda$ can be presented as an $(m-1) \times n$ matrix $(a_{jk}) $, where $a_{jk}=0$ if and only if $j\in I_k$. 
In other words, a coordinate $a_{jk}$ in the matrix presentation of $p_\Lambda$ equals zero if and only if $y_{jk}=0$ for every element $(y_{jk}) \in p_\Lambda$. 
An example of the poset $P(\A)$ for parameters $n=2, m=4$ and $c=3$ is shown in Figure \ref{fig:poset}.

\begin{figure}[h]
\begin{center}
\includegraphics[width=0.5\textwidth]{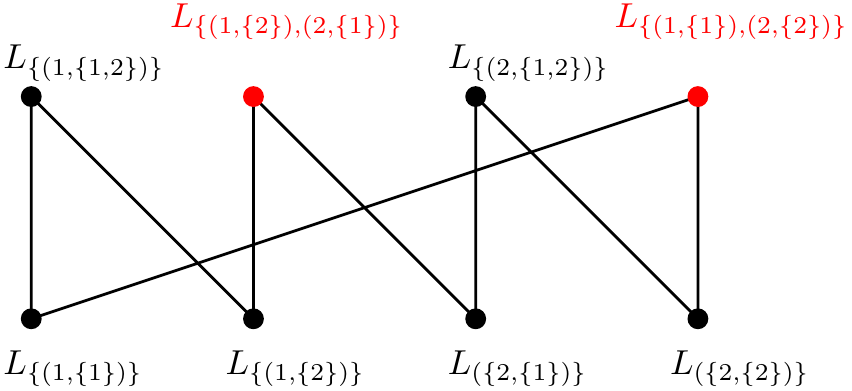}
\caption{Hasse diagram of the poset $P(\A(4,2,3))$.}
\label{fig:poset}
\end{center}
\end{figure}

Let $\C$ be the $P$-diagram that corresponds to the arrangement $\A=\A(m,n,c)$, that is $\C(p_\Lambda):= p_\Lambda$ and 
$\C(p_{\Lambda'}\supseteq p_{\Lambda''})\colon p_{\Lambda''}\longrightarrow p_{\Lambda'}$ is the inclusion, see \cite[Sec.\,2.1]{Welker1999}.
The Equivariant Projection Lemma \cite[Lem.\,2.1]{Sundaram1997} implies the following.
 
\begin{lemma}
	\label{lemma : beta}
	The projection map  
	\[
	\hocolim_{P(\A)}\C \longrightarrow \colim_{P(\A)}\C  = \bigcup\A
	\]
	is an $\sym_n$-equivariant homotopy equivalence.
	In particular, there exists an $\sym_n$-equivariant map
	\[
	\beta\colon \bigcup\A\longrightarrow \hocolim_{P(\A)}\C.
	\]
\end{lemma}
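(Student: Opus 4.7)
The plan is to apply the Equivariant Projection Lemma \cite[Lem.\,2.1]{Sundaram1997} to the $P(\A)$-diagram $\C$, which will upgrade the standard homotopy-colimit-to-colimit equivalence for a diagram of contractible spaces into an $\sym_n$-equivariant statement. Three hypotheses must be verified. First, every poset element $p_\Lambda$ is a finite intersection of affine subspaces of $V$, hence itself an affine subspace and in particular contractible, so $\C(p_\Lambda) = p_\Lambda$ is contractible. Second, each structure map $\C(p_{\Lambda'} \supseteq p_{\Lambda''}) \colon p_{\Lambda''} \hookrightarrow p_{\Lambda'}$ is a closed inclusion of affine subspaces, hence a cofibration. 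Third, the column-permutation action of $\sym_n$ on $V$ permutes the affine subspaces $L_{i,I} \in \A$ among themselves and respects inclusions, so it induces an order-preserving $\sym_n$-action on $P(\A)$ and makes $\C$ into an $\sym_n$-diagram; the canonical projection $\hocolim_{P(\A)}\C \to \colim_{P(\A)}\C$ is then $\sym_n$-equivariant by functoriality of the homotopy colimit construction.

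With these hypotheses in place, the cited Equivariant Projection Lemma asserts that the projection is an $\sym_n$-equivariant homotopy equivalence. Combined with the standard identification $\colim_{P(\A)}\C = \bigcup\A$ (which holds automatically for a diagram of closed inclusions indexed by the intersection poset of an affine arrangement), this proves the first assertion of the lemma. The existence of $\beta$ in the second assertion then follows immediately by choosing any $\sym_n$-equivariant homotopy inverse to the projection, the existence of which is part of the conclusion of the Equivariant Projection Lemma.

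The only nontrivial bookkeeping is the verification that the $\sym_n$-action on $V$ restricts to an action permuting the collection $\A$, which follows directly from the defining equations $L_{i,I} = \{(y_{jk}) \in V : y_{r,i} = 0 \text{ for every } r \in I\}$ together with the prescribed action on columns of matrices in $V$; everything else in the argument is a direct invocation of the cited equivariant Projection Lemma applied to a diagram of affine subspaces with inclusion structure maps. Accordingly, no genuinely hard step is anticipated.
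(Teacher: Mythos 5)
Your proof takes exactly the same route as the paper, which establishes this lemma by the one-line citation of the Equivariant Projection Lemma from Sundaram and Welker. One small caveat in your list of hypotheses: contractibility of the diagram spaces $\C(p_\Lambda)$ is not actually among the hypotheses of the Projection Lemma (it applies to non-contractible diagrams too), and the relevant cofibration hypothesis concerns the inclusions $\colim_{P(\A)_{>p}}\C \hookrightarrow \C(p)$ rather than the individual structure maps --- both points are harmless here since affine subspaces and their finite unions form CW pairs --- but neither affects the validity of your argument.
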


\medskip
Now, let $Q$ be the face poset of the $(n-1)$-dimensional simplex where the order is given by the inclusion.
Define the monotone map $\varphi\colon P \longrightarrow Q$ by 
\[
\varphi \left( p_\Lambda \right) := \left\lbrace i \in [n] :  (i,I) \in \Lambda \text{ for some }I \subseteq [m-1] \right\rbrace.
\]
Thus, $\varphi$ maps an element $p_\Lambda$ to the set of indices of its columns that contain zeros.
It is important to notice that $\varphi$ does not have to be surjective, and therefore we set $Q':=\varphi(P)\subseteq Q$.

\medskip
Next we consider the homotopy pushdown $\D$ of the diagram $\C$ along the map $\varphi$ over $Q'$, see \cite[Sec.\,3.2]{Welker1999}. 
This means that for $q \in Q'$
\begin{eqnarray*}
\D(q):=\hocolim_{\varphi^{-1}(Q'_{\geq q})} \C|_{\varphi^{-1} (Q'_{\geq q})} \simeq \Delta(\varphi^{-1}(Q'_{\geq q})) ,
\end{eqnarray*}
and for every $q\geq r$ in $Q'$ the map $\D(q\geq r)\colon \D(q)\longrightarrow \D(r)$
is the corresponding inclusion. 
The next result follows from the Homotopy Pushdown Lemma \cite[Prop.\,3.12]{Welker1999} adapted to the equivariant setting. 

\begin{lemma}
	\label{lemma : gamma}
	There is an $\sym_n$-equivariant homotopy equivalence 
	\[
	\hocolim_{Q'}\D \longrightarrow \hocolim_{P(\A)}\C .
	\]
	In particular, there exists an $\sym_n$-equivariant map
	\[
	\gamma\colon \hocolim_{P(\A)}\C\longrightarrow \hocolim_{Q'}\D.
	\]
\end{lemma}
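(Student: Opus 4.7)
The plan is to apply the Homotopy Pushdown Lemma of Welker--Ziegler--\v{Z}ivaljevi\'c \cite[Prop.~3.12]{Welker1999} to the monotone map $\varphi\colon P(\A)\to Q'$, and then upgrade the resulting homotopy equivalence to an $\sym_n$-equivariant one by exploiting the naturality of the construction. The reverse map $\gamma$ is then obtained by invoking equivariant homotopy inversion for $\sym_n$-CW complexes.

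First I would verify that $\varphi$ is $\sym_n$-equivariant. The action of $\sym_n$ on $P(\A)$ permutes columns in the matrix presentation, that is, $\pi\cdot p_\Lambda$ has a forced zero in position $(j,k)$ iff $p_\Lambda$ has one in position $(j,\pi^{-1}(k))$. On the face poset $Q$ of the $(n-1)$-simplex, $\sym_n$ permutes the ground set $[n]$. Since $\varphi(p_\Lambda)$ is precisely the set of column indices at which $p_\Lambda$ is forced to vanish, one gets $\varphi(\pi\cdot p_\Lambda)=\pi\cdot\varphi(p_\Lambda)$. In particular $Q'=\varphi(P)$ is an $\sym_n$-invariant subposet of $Q$, and for each $q\in Q'$ and $\pi\in\sym_n$ the element $\pi$ carries $\varphi^{-1}(Q'_{\geq q})$ isomorphically onto $\varphi^{-1}(Q'_{\geq \pi\cdot q})$, covering this by the corresponding morphism of the restricted diagram $\C$. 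Consequently each $\D(q)$ inherits an action, and $\D$ is an equivariant diagram over $Q'$ in the sense that $\pi$ gives a natural isomorphism $\D\Rightarrow\D\circ\pi^{-1}$ compatible with the structure maps $\D(q\geq r)$.

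Next I would apply the Homotopy Pushdown Lemma, which yields a natural homotopy equivalence
\[
\eta\colon \hocolim_{Q'}\D \;\longrightarrow\; \hocolim_{P(\A)}\C.
\]
The map $\eta$ is built functorially from the nerve/simplicial replacement constructions and from the face collapses associated with the inclusions $\varphi^{-1}(Q'_{\geq q})\hookrightarrow P$, all of which are strictly natural with respect to poset automorphisms. Since the $\sym_n$-action on $\hocolim_{Q'}\D$ is induced (level-wise) by the same permutation action that yields the action on $\hocolim_{P(\A)}\C$, naturality implies that $\eta$ is $\sym_n$-equivariant, and it remains a homotopy equivalence because it already is non-equivariantly.

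To produce $\gamma$, I use that both homotopy colimits are (homotopy equivalent to) $\sym_n$-CW complexes: homotopy colimits of diagrams of CW-complexes over posets come with canonical CW-structures for which the permutation action is cellular. An equivariant map between $G$-CW complexes that is a non-equivariant homotopy equivalence admits an equivariant homotopy inverse (this is standard equivariant obstruction theory, cf.\ Bredon or tom Dieck). Applying this to $\eta$ yields the required $\sym_n$-equivariant map $\gamma\colon \hocolim_{P(\A)}\C\to\hocolim_{Q'}\D$.

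The main obstacle is the bookkeeping of equivariance inside the Homotopy Pushdown construction: one must check that each intermediate simplicial replacement and nerve-taking step respects the $\sym_n$-action before passing to geometric realizations. Once one observes that all constructions involved are functorial in poset morphisms and that $\varphi$ is equivariant, the equivariant statement is forced, and the existence of $\gamma$ is then purely formal.
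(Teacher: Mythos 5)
Your identification of the equivariance of $\varphi$, the $\sym_n$-invariance of $Q'$, and the induced equivariant structure on $\D$ is correct, and applying the Homotopy Pushdown Lemma and arguing that the resulting map is equivariant by naturality is indeed the paper's approach (the paper simply cites ``the Homotopy Pushdown Lemma \cite[Prop.\,3.12]{Welker1999} adapted to the equivariant setting'').

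However, the final step producing $\gamma$ rests on a false assertion. You claim that an equivariant map between $G$-CW complexes which is a non-equivariant homotopy equivalence admits an equivariant homotopy inverse, citing Bredon or tom Dieck. That is not what the equivariant Whitehead theorem says: to get an equivariant homotopy inverse one needs $\eta^H\colon X^H\to Y^H$ to be a homotopy equivalence for \emph{every} subgroup $H\le \sym_n$, not merely for $H=\{e\}$. A standard counterexample: for $G=\Z/2$, the constant map $E\Z/2\to \pt$ is $G$-equivariant and a non-equivariant homotopy equivalence, yet there is not even an equivariant map $\pt\to E\Z/2$, because $(E\Z/2)^G=\emptyset$. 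In the present situation the $\sym_n$-actions on $\hocolim_{P(\A)}\C$ and $\hocolim_{Q'}\D$ are far from free (the face poset $Q$ of the $(n-1)$-simplex has many elements fixed by nontrivial permutations), so this gap cannot be dismissed.

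To close it you would either (i) verify that $\eta$ restricts to a homotopy equivalence on fixed-point sets for every $H\le\sym_n$ — essentially by observing that the pushdown construction restricts to the fixed subposets $P(\A)^H\to (Q')^H$ and applying the non-equivariant lemma there — and then invoke the equivariant Whitehead theorem; or (ii) observe that the proof of the Homotopy Pushdown Lemma \emph{constructs} the homotopy inverse and the homotopies themselves out of simplicial/cellular maps that are natural under poset automorphisms, so that the inverse map and the two homotopies are already $\sym_n$-equivariant. Either route produces $\gamma$, but your stated reason does not.
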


\medskip
We introduce another $Q'$-diagram $\E$ by setting for $q\in Q'$ that
\begin{eqnarray*}
\E(q): = \begin{cases}
		\D(\hat{1})\,\simeq\, \Delta(\varphi^{-1}(\{\hat{1}\})), & \textrm{ if } q=\hat{1}\in Q' \textrm{ is the maximum of } Q, \\
		\pt, & \textrm{ otherwise,}
		\end{cases}
\end{eqnarray*}
and for every $q\geq r$ in $Q'$ the map $\E(q\geq r)$ to be the constant map.
In addition, we define a morphism of diagrams $(\Psi,\psi)\colon \D\longrightarrow\E$, where $\psi\colon Q'\longrightarrow Q'$ is the identity map, and $\Psi(q)\colon \D(q)\longrightarrow \E(q)$ is the identity map when $q$ is the maximal element, and constant map otherwise.
The morphism $(\Psi,\psi)$ of diagrams induces an $\sym_n$-equivariant map between associated homotopy colimits, consult \cite[Sec.\,3]{Welker1999}. 
Thus, we have established the following.

\begin{lemma}
	\label{lemma : delta}
	There exists an $\sym_n$-equivariant map
	\[
	\delta\colon \hocolim_{Q'}\D\longrightarrow \hocolim_{Q'}\E.
	\]
\end{lemma}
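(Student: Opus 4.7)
The plan is to obtain $\delta$ as the map induced on homotopy colimits by the morphism $(\Psi,\psi)\colon\D\longrightarrow\E$ of $Q'$-diagrams described immediately before the statement. By functoriality of the homotopy colimit in the $\sym_n$-equivariant setting~\cite[Sec.\,3]{Welker1999}, such an induced map exists and is $\sym_n$-equivariant as soon as I verify that $(\Psi,\psi)$ is a well-defined morphism of diagrams and that it intertwines the $\sym_n$-actions. The entire content of the lemma sits in these two formal checks; the substantive construction has already been carried out in the preceding paragraph where $\E$ and $\Psi$ were defined.

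First I would verify that $(\Psi,\psi)$ is a morphism of $Q'$-diagrams, i.e.\ that for every $q\geq r$ in $Q'$ the square
\[
\xymatrix{
\D(q)\ar[r]^-{\Psi(q)}\ar[d]_-{\D(q\geq r)} & \E(q)\ar[d]^-{\E(q\geq r)} \\
\D(r)\ar[r]_-{\Psi(r)} & \E(r)
}
\]
commutes. A case split handles everything trivially. If $r\neq\hat{1}$, then $\E(r)=\pt$ and both compositions are the unique map into a point. If $r=\hat{1}$, then by monotonicity $q=\hat{1}$ as well, so $\Psi(q)=\Psi(r)=\id_{\D(\hat{1})}$, both structure maps $\D(q\geq r)$ and $\E(q\geq r)$ are identities, and the square commutes tautologically. (In the degenerate situation $\hat{1}\notin Q'$ the diagram $\E$ is constant at $\pt$ and there is nothing to check at all.)

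Second, I would check $\sym_n$-equivariance. Since $\hat{1}$ is the unique maximum of $Q'$, it is fixed by the induced $\sym_n$-action on $Q'$, so $\psi=\id_{Q'}$ is automatically equivariant. For the natural transformation $\Psi$: when $q\neq\hat{1}$ both $\E(q)$ and $\E(\pi\cdot q)$ equal $\pt$ and equivariance is trivial; when $q=\hat{1}$, the value $\pi\cdot q=\hat{1}$ and the map $\Psi(\hat{1})=\id_{\D(\hat{1})}$ obviously commutes with the $\sym_n$-action inherited on $\D(\hat{1})$. Applying the functor $\hocolim_{Q'}(-)$ to this equivariant morphism of diagrams now yields the desired $\sym_n$-equivariant map $\delta$. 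There is no real obstacle beyond keeping track of the distinguished role of $\hat{1}$; the heart of the argument was choosing $\E$ so that the naturality squares collapse to tautologies.
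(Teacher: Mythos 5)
Your proposal is correct and takes the same approach as the paper: both obtain $\delta$ by applying $\hocolim_{Q'}(-)$ to the morphism of diagrams $(\Psi,\psi)$ constructed just before the lemma, invoking functoriality of equivariant homotopy colimits as in \cite[Sec.\,3]{Welker1999}. You merely spell out the commutativity and equivariance checks that the paper leaves implicit.
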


\medskip	
In the final lemma we describe the $\hocolim_{Q'}\E$ up to an $\sym_n$-equivariant homotopy.
First note that if $q,r \in Q$ are such that $q \geq r$ and $q\in Q'$, then $r\in Q'$. In particular, if $\hat{1} \in Q'$, then $Q'=Q$, where $\hat{1}$ is the maximum of $Q$.
\begin{lemma}
\label{lemma : hocolim E}
~~
\begin{compactenum}[\rm\qquad (i)]
\item If $\hat{1}\in Q'$, that is $Q'=Q$, then there exists an $\sym_n$-equivariant homotopy equivalence
\[
\hocolim_{Q'}\E \,\simeq\, \Delta (Q'{\setminus}\{\hat{1}\}) * \Delta(\varphi^{-1}(\hat{\{1\}}))
\]
where $\hat{1}$ is the maximum of $Q$, and $\dim\big( \Delta(\varphi^{-1}(\{\hat{1}\}))\big) =nc-m-2n+1$.
In particular, there exists an $\sym_n$-equivariant map
\[
\eta\colon \hocolim_{Q'}\E\longrightarrow \Delta (Q'{\setminus}\{\hat{1}\}) * \Delta(\varphi^{-1}(\{\hat{1}\})).
\]

\item If $\hat{1}\notin Q'$ then there exists an $\sym_n$-equivariant homotopy equivalence
\[
\hocolim_{Q'}\E\simeq \Delta (Q'),
\]
where $\dim ( \Delta (Q'))\leq n-2$.
In particular, there exists an $\sym_n$-equivariant map
\[
\eta\colon \hocolim_{Q'}\E\longrightarrow \Delta (Q').
\]
\end{compactenum}

\end{lemma}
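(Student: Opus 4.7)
My approach is to dispatch the two cases separately by unpacking $\hocolim\E$ via its Bousfield--Kan bar construction, identifying the result with either an order complex (part (ii)) or a topological join (part (i)).

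For part (ii), if $\hat{1}\notin Q'$ then by downward closedness of $Q'$ in $Q$ we have $Q'\subseteq Q\setminus\{\hat{1}\}$, and the diagram $\E$ is by construction the $\sym_n$-equivariant constant diagram with value $\pt$. The standard identification of the homotopy colimit of a constant-point diagram with the order complex of the indexing poset gives the equivariant equivalence $\hocolim_{Q'}\E\simeq\Delta(Q')$. The bound $\dim\Delta(Q')\leq n-2$ follows because $Q\setminus\{\hat{1}\}$, the face poset of $\partial\Delta^{n-1}$, has longest strict chain of length $n-1$.

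For part (i), the assumption $\hat{1}\in Q'$ together with downward closedness forces $Q'=Q$. Write $X:=\Delta(\varphi^{-1}(\{\hat{1}\}))$, so $\E(\hat{1})=X$ and $\E(q)=\pt$ for $q<\hat{1}$, with all structure maps the canonical projections. I split the non-degenerate $k$-simplices $q_0>q_1>\cdots>q_k$ of the bar construction according to whether $q_0=\hat{1}$. Chains with $q_0<\hat{1}$ contribute $\pt\times\Delta^k$ and assemble into $\Delta(Q\setminus\{\hat{1}\})$; chains with $q_0=\hat{1}$ contribute $X\times\Delta^k$, with $d_0$-face attached via the projection $X\times\Delta^{k-1}\to\pt\times\Delta^{k-1}$. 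Collecting the pieces shows that $\hocolim_Q\E$ is the homotopy pushout of
\[
X\;\xleftarrow{\pi_X}\;X\times\Delta(Q\setminus\{\hat{1}\})\;\xrightarrow{\pi_Y}\;\Delta(Q\setminus\{\hat{1}\}),
\]
which by definition is the join $X*\Delta(Q\setminus\{\hat{1}\})$. Since $\sym_n$ acts on $Q$ by automorphisms preserving $\E$ and on $\varphi^{-1}(\{\hat{1}\})$ by poset automorphisms, every arrow in sight is $\sym_n$-equivariant, giving the desired equivariant homotopy equivalence.

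It remains to compute $\dim X$. Identify each $p_\Lambda\in\varphi^{-1}(\{\hat{1}\})$ with the tuple $(I_1,\ldots,I_n)$ of its columnwise zero patterns. The defining constraints are $|I_i|\geq m-c+1$ and $\bigcap_{i=1}^n I_i=\emptyset$: the first because each column indexed by an element of $\varphi(p_\Lambda)=\hat{1}$ carries at least $m-c+1$ zeros, the second forced by the row-sum equations $\sum_k y_{jk}=\mu_j(\R^d)>0$ defining $V$, which prohibit an entirely zero row. The poset order is componentwise inclusion. The smallest tuple has total size $n(m-c+1)$ and the largest has total size $(n-1)(m-1)$, since the complements $I_i^c$ must cover $[m-1]$, so $\sum_i|I_i^c|\geq m-1$. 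A maximal chain adds one element to the total size at each step, so it has $(n-1)(m-1)-n(m-c+1)+1=nc-m-2n+2$ elements and $\dim X = nc-m-2n+1$. The main delicacy I expect is verifying that the pushout identification in part (i) is $\sym_n$-equivariantly natural; this reduces to a straightforward compatibility check between the column-permutation action and the bar-construction cell structure, but must be carried out carefully.
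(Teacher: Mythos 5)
Your proof is correct and arrives at the same decomposition as the paper, but by a slightly different route in part (i): you unpack the Bousfield--Kan bar construction by hand and exhibit $\hocolim_{Q}\E$ directly as the homotopy pushout $X\leftarrow X\times\Delta(Q\setminus\{\hat 1\})\rightarrow\Delta(Q\setminus\{\hat 1\})$, whereas the paper invokes the Wedge Lemma of Welker--Ziegler--\v Zivaljevi\'c, which a priori produces a wedge $\bigvee_{q}(\Delta(Q'_{<q})*\E(q))\vee\Delta(Q')$ that then collapses to the single join summand $\Delta(Q\setminus\{\hat 1\})*\Delta(\varphi^{-1}(\hat 1))$ because $\E(q)=\pt$ for $q\neq\hat 1$ and $Q'$ has a maximum. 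Your bar-construction argument is more self-contained and avoids the basepoint bookkeeping of the Wedge Lemma (the paper needs to note that the collapsed summands are points precisely so the wedge equivalence is equivariant; in your pushout description equivariance is manifest from the start, which is the ``delicacy'' you flag at the end but which is in fact automatic). The dimension computations agree word for word, including the identification of the constraint $\bigcap_i I_i=\emptyset$ with the positivity of the row sums in $V$ and the graded count from $n(m-c+1)$ to $(n-1)(m-1)$ zeros; as in the paper, you take for granted (reasonably, but without explicit proof) that the poset is graded by total number of zeros and that both extremes are actually attained when $\hat 1\in Q'$. Part (ii) is essentially identical to the paper's argument.
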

\begin{proof}
{\rm (i)}
Let us first consider the case when $\hat{1}\in Q'$.
Then, since all the maps of the diagram $\E$ are constant maps, the Wedge Lemma \cite[Lem.\,4.9]{Welker1999} yields a homotopy equivalence 
\[
\hocolim_{Q'} \E \ \simeq \ \bigvee_{q \in Q'} (\Delta(Q'_{<q}) * \E(q)) \vee \Delta (Q') \ \simeq \  \Delta (Q'{\setminus}\{\hat{1}\}) * \Delta(\varphi^{-1}(\{\hat{1}\})).
\]
Here we use that $\Delta(Q')\simeq\pt$ because $\Q'$ has the maximum.
Furthermore, since for $q\neq \hat{1}$ all the spaces $\E(q)$ are points, this homotopy equivalence is an $\sym_n$-equivariant homotopy equivalence.

The poset $\varphi^{-1}(\{\hat{1}\})$ consists of all points $p_\Lambda \in P$ that correspond to matrices which have zeros in all columns. 
Since it is a subposet of $P(\A)$, every element of $\varphi^{-1}(\{\hat{1}\})$ must contain at least $m-c+1$ zeros in each column and at most $n-1$ zeros in each row. The partial order is given by 
\[
p_\Lambda \leq p_{\Lambda'}  \ \Longleftrightarrow \ (\forall j\in [m-1]) \ (\forall k \in [n]) \ a_{jk}=0 \Rightarrow a'_{jk}=0,
\]
where $p_\Lambda = (a_{jk}) $ and $p_{\Lambda'} = (a'_{jk}) $. 
Maximal chains in the poset $\varphi^{-1}(\hat{1})$ can be obtained by removing zeros from a maximal element $p_\Lambda$ one by one, taking care that there must be at least $m-c+1$ zeros in each column. 
Maximal elements of $\varphi^{-1}(\hat{1})$ have exactly one non-zero element in each row, thus $(m-1)(n-1)$ zeros. 
Since $\hat{1}\in Q'$ the minimal elements of the poset $\varphi^{-1}(\hat{1})$ have $m-c+1$ zeros in each column, thus $n(m-c+1)$ zeros. 
Therefore, the length of a maximal chain in $\varphi^{-1}(\hat{1})$, and consequently the dimension of its order complex $\Delta(\varphi^{-1}(\hat{1}))$, is $nc-m-2n+1$.
In particular, we obtained that when $\hat{1}\in Q'$ then $nc-m-2n+1\geq 0$, or equivalently $n(c-2)+1\geq m$.

{\rm (ii)} Let $\hat{1}\notin Q'$. 
Then using the inclusion-exclusion principle it is not hard to see that $n(c-2)+1< m$.
Again, the Wedge Lemma \cite[Lem.\,4.9]{Welker1999} yields a homotopy equivalence 
\[
\hocolim_{Q'} \E \ \simeq \ \bigvee_{q \in Q'} (\Delta(Q'_{<q}) * \E(q)) \vee \Delta (Q') \ \simeq \  \Delta (Q' ),
\]
since now all the spaces $\E(q)$ are points for $q\in Q'$.

From the assumption $\hat{1}\notin Q'$ we get that $Q'\subseteq Q{\setminus}\{\hat{1}\}$ and consequently $\Delta(Q')\subseteq \Delta (Q{\setminus}\{\hat{1}\})$.
On the other hand $\Delta (Q{\setminus}\{\hat{1}\})$ is homeomorphic with the boundary of an $(n-1)$-dimensional simplex and so $\dim ( \Delta (Q'))\leq n-2$.
\end{proof}

In the example for parameters $n=2, m=4, c=3$, the poset $\varphi^{-1}(\{\hat{1}\})$ consists of two points $L_{\{(1,\{2\}),(2,\{1\})\}}$ and $L{\{(1,\{1\}),(2,\{2\})\}}$ with no relations between them, as shown in red in Figure \ref{fig:poset}.

\medskip
Now we have assembled all the ingredients for the proof of the central result about the non-existence of an $\sym_n$-equivariant map $\emp(\mu_m,n) \longrightarrow \bigcup\A$.
 
\begin{theorem}
\label{thm:no map one measure}
Let $d\geq 2$, $m\geq 2$, and $c\geq 2$ be integers, and let $n=p^k$ be a prime power.
If 
\[
m\geq n(c-d)+\frac{dn}{p}-\frac{n}{p}+1,
\]
then there is no continuous $\sym_n$-equivariant map
\begin{equation}
	\label{eq : map that should not exist - 01}
	 \emp(\mu_m,n) \longrightarrow \bigcup\A(m,n,c),
\end{equation}
where $\mu_m$ is a finite absolutely continuous measure on $\R^d$, and the affine arrangement $\A(m,n,c)$ is as defined in \eqref{eq:A one measure}.
\end{theorem}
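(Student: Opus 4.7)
I would argue by contradiction. Suppose an $\sym_n$-equivariant map $\emp(\mu_m,n)\to\bigcup\A(m,n,c)$ exists. Pre-composing with the parametrization $\alpha\colon\conf(\R^d,n)\to\emp(\mu_m,n)$ from Lemma \ref{lemma : alpha} and post-composing with the chain of $\sym_n$-equivariant maps $\beta,\gamma,\delta,\eta$ from Lemmas \ref{lemma : beta}--\ref{lemma : hocolim E} would yield an $\sym_n$-equivariant map
\[
F\colon \conf(\R^d, n)\longrightarrow T,
\]
where $T$ is the CW complex described by Lemma \ref{lemma : hocolim E}: the join $\Delta(Q'\setminus\{\hat 1\})*\Delta(\varphi^{-1}(\{\hat 1\}))$ when $\hat 1\in Q'$, or $\Delta(Q')$ when $\hat 1\notin Q'$. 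The goal is to obstruct the existence of $F$.

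The next step is purely numerical: bound $\dim T$ from above using the hypothesis on $m$. In case $\hat 1 \in Q'$ (equivalently $m\le n(c-2)+1$), the dimensions provided by Lemma \ref{lemma : hocolim E} give
\[
\dim T \;\le\; (n-2) + (nc-m-2n+1) + 1 \;=\; nc-m-n,
\]
and substituting $m\ge n(c-d)+\tfrac{dn}{p}-\tfrac{n}{p}+1$ produces the clean estimate $\dim T \le (d-1)(n-n/p)-1$. In the complementary case $\hat 1\notin Q'$ (i.e.\ $m\ge n(c-2)+2$) the bound is $\dim T\le n-2$, and one either reconciles this with the same $(d-1)(n-n/p)-1$ bound by a case analysis of the parameters, or exploits that $\Delta(Q')$ is $\sym_n$-equivariantly a subcomplex of $\partial\Delta^{n-1}$ on which the group acts by coordinate permutation.

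The topological heart of the proof is then to restrict the $\sym_n$-action to the elementary abelian subgroup $E=(\Z/p)^k\hookrightarrow\sym_n$ embedded via the regular permutation representation, so that $E$ acts freely on $\conf(\R^d,n)$, and to invoke the classical lower bound
\[
\operatorname{Index}_E^{\F_p}\conf(\R^d,n) \;\ge\; (d-1)\!\left(n-\tfrac{n}{p}\right)
\]
on the Fadell--Husseini index of the configuration space, a staple of the equivariant topology of configuration spaces. Combined with the dimension bound on $T$, this index estimate rules out the existence of any $E$-equivariant map $\conf(\R^d,n)\to T$ (for instance, by a cohomological argument or Volovikov's lemma), contradicting the assumption and hence proving the theorem. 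The step I expect to be the most delicate is case (ii), where the crude bound $\dim\Delta(Q')\le n-2$ is close to (and, in small-$d$, small-$p$ regimes, flush with) the target value $(d-1)(n-n/p)-1$; closing this gap requires either a finer equivariant index estimate for $\Delta(Q')$ under the regular permutation action on its vertex set, or a careful parameter check showing that the offending $(d,n,p,c,m)$-combinations never actually occur under the hypothesis.
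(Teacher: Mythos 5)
Your plan reproduces the paper's proof in all essential respects: compose $f$ with $\alpha$ and with the chain $\beta,\gamma,\delta,\eta$ from Lemmas \ref{lemma : alpha}--\ref{lemma : hocolim E} to obtain a $\sym_n$-equivariant map $\conf(\R^d,n)\to T$; restrict to the regularly embedded $G=(\Z/p)^k\le\sym_n$; then play the known structure of the Borel spectral sequence of $\conf(\R^d,n)$ against a dimension bound on $T$. Your dimension arithmetic in case (i) is correct and matches the paper ($\dim T\le nc-m-n\le (d-1)(n-\tfrac{n}{p})-1$). One point of caution on the phrasing of the ``topological heart'': $G$ acts freely on $\conf(\R^d,n)$, but it does \emph{not} act freely on $T$. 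For $k\ge 2$ a nontrivial $g\in G$ of order $p$ stabilizes each of its $p$-element $\langle g\rangle$-orbits in $[n]$; these are proper faces of the simplex, hence vertices of $\Delta(Q\setminus\{\hat 1\})$ that $g$ fixes. So one cannot bound $\ind_G(T)$ by $\dim T+1$ via a free-action argument. The paper gets around this by combining the dimension bound (which forces the Serre spectral sequence of the Borel construction on $T$ to stabilize by page $\dim T+2$) with the localization theorem, which only needs the absence of $G$-\emph{fixed} points; this is the precise form your Volovikov/index sketch needs to take. The paper explicitly notes the equivalence of the two languages in the remark after Theorem \ref{thm:no map one measure}, so up to that repair your reformulation is the same argument.

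Your caution about case (ii) is not only well-placed, it is the crux: the ``careful parameter check'' that you defer actually fails. When $\hat 1\notin Q'$ the argument uses only $\dim\Delta(Q')\le n-2$, so it requires that the $\lambda$-side spectral sequence is still undisturbed through page $n-1$, i.e.\ $(d-1)(n-\tfrac{n}{p})+1\ge n$. The paper asserts this holds for all $d\ge 2$, $p\ge 2$, $n=p^k$, but for $d=2$ and $k\ge 2$ it reads $n-\tfrac{n}{p}+1\ge n$, i.e.\ $\tfrac{n}{p}\le 1$, which is false since $\tfrac{n}{p}=p^{k-1}\ge 2$. Moreover, for $d=2$ the hypothesis $m\ge n(c-2)+\tfrac{n}{p}+1>n(c-2)+1$ always places you in case (ii), so this is not a corner one can dodge. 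Thus, following the paper's route exactly, the argument as written does not close for $d=2$, $k\ge 2$; one would indeed need either the sharper equivariant estimate on $\Delta(Q')$ that you mention (exploiting that it is a proper $G$-invariant subcomplex of the barycentric subdivision of $\partial\Delta^{n-1}$) or a different obstruction. So: right plan, correct instinct about where the danger lies, but the gap in case (ii) is real and would have to be filled, not merely flagged.
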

\begin{proof}
Let $n=p^k$ be a prime power.
Denote by $G\cong (\Z/p)^k$ a subgroup of the symmetric group $\sym_n$ given by the regular embedding $\mathrm{(reg)} \colon G \longrightarrow \sym_n$,~for more details see for example \cite[Ex.\,III.2.7]{Adem2004}.

\medskip
In order to prove the non-existence of an $\sym_n$-equivariant map \eqref{eq : map that should not exist - 01}, we proceed by contradiction.
Let $f\colon \emp(\mu_m,n) \longrightarrow \bigcup\A(m,n,c)$ be a continuous $\sym_n$-equivariant map.
Then from Lemmas \ref{lemma : alpha}, \ref{lemma : beta}, \ref{lemma : gamma}, \ref{lemma : delta} and \ref{lemma : hocolim E} we get the following composition of $\sym_n$-equivariant maps
\[
\xymatrix{
 \emp(\mu_m,n) \ar[r]^-{f} &  \bigcup\A\ar[r]^-{\beta} & \hocolim_{P(\A)}\C\ar[r]^-{\gamma} & \hocolim_{Q'}\D\ar[r]^-{\delta} & \hocolim_{Q'}\E\ar[d]_{\eta}  \\
\conf(\R^d,n)\ar[u]^{\alpha}\ar@{-->}[rrrr]^-{g:=\eta\circ\delta\circ\gamma\circ\beta\circ f\circ\alpha} &  &  &  &  X,
}
\]
where
\[
X:=
\begin{cases}
	\Delta(Q'{\setminus}\{\hat{1}\}) * \Delta(\varphi^{-1}(\{\hat{1}\})), & \text{if } \hat{1}\in Q',\\
	\Delta(Q'),  &\text{if } \hat{1}\notin Q'.
	\end{cases} 
\]
Thus, the existence of an $\sym_n$-equivariant map $f\colon \emp(\mu_m,n)\longrightarrow \bigcup\A$ implies the existence of an $\sym_n$-equivariant map $g\colon \conf(\R^d,n)\longrightarrow C$.
We will reach contradiction with the assumption that the map $f$ exists by proving that the map $g$ cannot exist.
More precisely, we will show that there cannot exist a $G$-equivariant map
\begin{equation}
		\label{eq : map that should not exist - 02}
		\conf(\R^d,n)\longrightarrow X.
\end{equation}

\medskip
Our argument starts with a continuous $\sym_n$ and also $G$-equivariant map $g\colon \conf(\R^d,n)\longrightarrow X$.
The map $g$ induces a morphism between Borel construction fibrations:
	\[
	\xymatrix{
	EG\times_G  \conf(\R^d,n)\ar[rr]^{\id\times_G g}\ar[d]_{\lambda} & & EG\times_G X\ar[d]_{\rho} \\
	BG\ar[rr]^-{\id}  & & BG,
	}
	\]
which in turn induces a morphism between corresponding Serre spectral sequences $E^{*,*}_*(g)\colon E^{*,*}_*(\rho)\longrightarrow E^{*,*}_*( \lambda )$.
The crucial property of the morphism $E^{*,*}_*(g)$ we  use is that $E^{*,0}_2(g)=\id$.
A contradiction with the assumption that there is a map $g$ is going to be obtained from an analysis of the morphism $E^{*,*}_*(g)$.
For that we first describe the spectral sequences $E^{*,*}_*(\lambda)$ and $E^{*,*}_*(\rho)$.

\medskip
The Serre spectral sequence of the fibration
	\[
	\xymatrix{
	\conf(\R^d,n)\ar[r] & EG\times_G \conf(\R^d,n)\ar[r] & BG
	}
	\]
has the $E_2$-term given by
	\[
	E^{i,j}_2(\lambda)=H^i(BG;\mathcal{H}^j(\conf(\R^d,n);\F_p))\cong H^i(G;H^j(\conf(\R^d,n);\F_p)).
	\]
	Here $H^i(BG;\mathcal{H}^j(Y;\F_p))$ denotes the cohomology of $BG$ with local coefficients in $H^j(Y;\F_p)$ determined by the action of the fundamental group of the base space $\pi_1(BG)\cong G$.
	The second description uses the fact that cohomology of the classifying space $BG$ of the group $G$ is by definition the cohomology of the group $G$ with coefficients in the $G$-module $H^j(\conf(\R^d,n);\F_p)$.  
	For more details on the cohomology with local coefficients consult for example \cite[Sec.\,3.H]{Hatcher2002}.
	The spectral sequence $E^{*,*}_*(\lambda)$ was completely determined in the case $k=1$, i.e., $n=p$ a prime, by Cohen \cite[Thm.\,8.2]{Cohen1976LNM533} and recently in \cite[Thm.\,6.1]{Blagojevic2015}.  
	A partial description of $E^{*,*}_*(\lambda)$ in the case $k\geq 2$ was given in \cite[Thm.\,6.3 and Thm.\,7.1]{Blagojevic2015}.
	In particular, for $k=1$
	\begin{equation}
	\label{eq : ss - 01}
	E^{*,*}_2(\lambda) \ \cong \ E^{*,*}_3(\lambda) \ \cong \ \cdots \ \cong \ E^{*,*}_{(d-1)(n-1)+1}(\lambda)
	\qquad\text{and}\qquad
	E^{*,*}_{(d-1)(n-1)+2}(\lambda) \ \cong \ \cdots \ \cong \ E^{*,*}_{\infty}(\lambda), 
	\end{equation}
	while for $k\geq 2$
	\begin{equation}
	\label{eq : ss - 02}
	E^{*,*}_2(\lambda) \ \cong \ E^{*,*}_3(\lambda) \ \cong \ \cdots \ \cong \ E^{*,*}_{(d-1)\big(n-\tfrac{n}{p}\big)+1}(\lambda).
	\end{equation}
	
\begin{figure}[h]
\begin{center}
\includegraphics[width=0.97\textwidth]{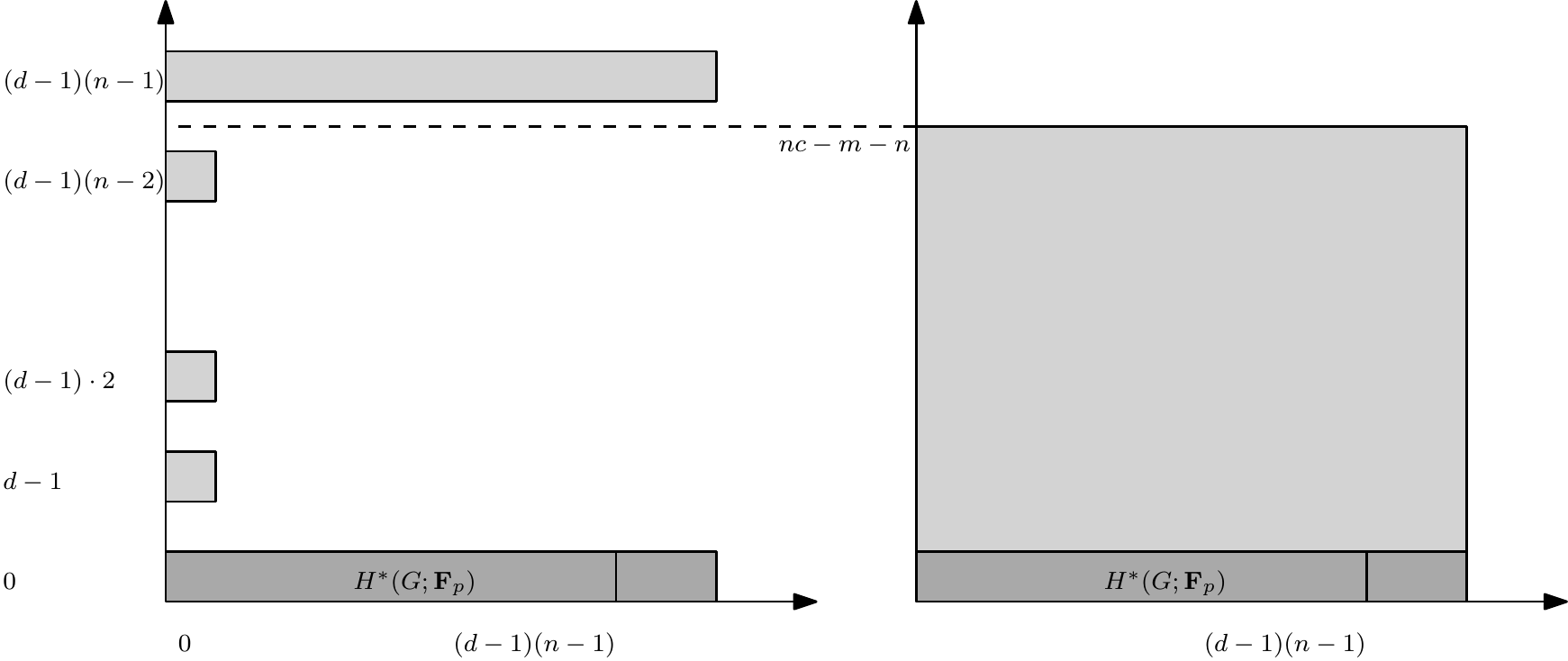}
\caption{An illustration of $E_2^{*,*}(\lambda)$ and $E_2^{*,*}(\rho)$ in the case when $n=p$ is a prime.}
\label{fig:poset}
\end{center}
\end{figure}
	
	\medskip
	In the second step we consider the Serre spectral sequence of the fibration
	\[
	\xymatrix{
	X\ar[r] & EG\times_G X\ar[r] & BG
	}
	\]
	whose $E_2$-term is given by
	\[
	E^{i,j}_2(\rho)=H^i(BG;\mathcal{H}^j(X;\F_p))\cong H^i(G;H^j(X;\F_p)).
	\]
	We conclude the proof by considering two separate cases.
	
	\medskip
	{\rm (a)} Let $\hat{1}\in Q'$, or equivalently $nc-m-2n+1\geq 0$.
	Then the simplicial complex $X=\Delta(Q'{\setminus}\{\hat{1}\})* \Delta(\varphi^{-1}(\{\hat{1}\}))$ is at most $(nc-m-n)$-dimensional, implying that
	$E_2^{i,j}(\rho)=0$ for all $j\geq nc-m-n+1$.
	Consequently all differentials $r_r$ for $r\geq nc-m-n+2$ vanish and so
	\begin{equation}
	\label{eq : ss - 03}
	E_{nc-m-n+2}^{i,j}(\rho) \ \cong \ E_{nc-m-n+3}^{i,j}(\rho) \ \cong \ \cdots \ \cong \ E_{\infty}^{i,j}(\rho). 
	\end{equation}
	 Next, since the path-connected simplical complex $X$ does not have fixed points with respect to the action of the elementary abelian group $G$, a consequence of the localization theorem \cite[Cor.\,1, p.\,45]{Hsiang1975} implies that $H^*(G;\F_p)\cong E^{*,0}_2(\rho)\not\cong E^{*,0}_{\infty}(\rho)$.
	 Having in mind \eqref{eq : ss - 03} we conclude that 
    \begin{equation*}
	H^*(G;\F_p) \ \cong \ E^{*,0}_2(\rho) \ \not\cong \ E_{nc-m-n+2}^{*,0}(\rho).
	\end{equation*}
	For our proof, without loss of generality, we can assume that
       \begin{equation}
	\label{eq : ss - 05}
     H^*(G;\F_p) \ \cong \ E^{*,0}_2(\rho) \ \cong \ E_{nc-m-n+1}^{*,0}(\rho) \ \not\cong \ E_{nc-m-n+2}^{*,0}(\rho).
    \end{equation}
    Now, from the assumption on $m$, we deduce that for $k=1$
    \[
    (d-1)(n-1)+1\geq nc-m-n+2,
    \]
    and for $k\geq 2$
     \[
    (d-1)\big(n-\frac{n}{p}\big)+1\geq nc-m-n+2.
    \]
	Hence the fact that $E^{*,0}_2(g)=\id$, in combination with relations \eqref{eq : ss - 01}, \eqref{eq : ss - 02} and \eqref{eq : ss - 05}, yields a contradiction: the homomorphism $E^{*,0}_{nc-m-n+2}(g)$ sends the zero to a non-zero element.
	This concludes the proof of the theorem in the case when $nc-2n+1\geq m$.
	
	\medskip
	{\rm (b)} Let $\hat{1}\notin Q'$, or equivalently $nc-m-2n+1< 0$.
	The simplicial complex $X=\Delta(Q')$ is at most $(n-2)$-dimensional.
	Hence, $E_2^{i,j}(\rho)=0$ for all $j\geq n-1$, and 
	\begin{equation}
	\label{eq : ss - 030}
	E_{n}^{i,j}(\rho) \ \cong \ E_{n+1}^{i,j}(\rho)\ \cong \ \cdots \ \cong \ E_{\infty}^{i,j}(\rho). 
	\end{equation}
	The simplical complex $X$ is path-connected  and without fixed points with respect to the action of the elementary abelian group $G$.
	Consequence of the localization theorem \cite[Cor.\,1, p.\,45]{Hsiang1975} implies that $H^*(G;\F_p)\cong E^{*,0}_2(\rho)\not\cong E^{*,0}_{\infty}(\rho)$.
	From \eqref{eq : ss - 030} we have that 
    \begin{equation*}
	H^*(G;\F_p) \ \cong \ E^{*,0}_2(\rho) \ \not\cong \ E_{n}^{*,0}(\rho).
	\end{equation*}
	For our proof, without loss of generality, we can assume that
       \begin{equation}
	\label{eq : ss - 050}
     H^*(G;\F_p) \ \cong \ E^{*,0}_2(\rho) \ \cong \ E_{n-1}^{*,0}(\rho) \ \not\cong \ E_{n}^{*,0}(\rho).
    \end{equation}
    Now, we need that for $k=1$
    \[
    (d-1)(n-1)+1\geq n,
    \]
    and for $k\geq 2$
    \[
    (d-1)\big(n-\frac{n}{p}\big)+1\geq n
    \]
    is satisfied. 
   	Indeed, these conditions are satisfied for $d \geq 2, p\geq 2$ and $n=p^k$.
	Thus, the fact that $E^{*,0}_2(g)=\id$ with \eqref{eq : ss - 01}, \eqref{eq : ss - 02} and \eqref{eq : ss - 050} gives a contradiction: the homomorphism $E^{*,0}_{n}(g)$ sends the zero to a non-zero element.
	This concludes the proof of the theorem in the case when $nc-2n+1< m$.	
\end{proof}

The previous proof can also be phrased in the language of the iterated index theory introduced by Volovikov in \cite{Volovikov2000}.

\subsection{Non-existence of an $\sym_n$-equivariant map {$\emp(\mu,n) \longrightarrow \bigcup\widetilde{\A}(m,n,c)$}}
\label{sec:Non-existence of equivariant maps II}

Motivated by Theorem \ref{thm : CS-TM scheme sum of measures}, in this section we prove the (non-)existence of a continuous $\sym_n$-equivariant map
\[
	  \emp(\mu,n) \longrightarrow \bigcup\widetilde{\A}(m,n,c)
\]
for different values of parameters $d,m,n$ and $c$. Following the structure of Section \ref{sec:Non-existence of equivariant maps I}, we first prove a few auxilary lemmas in order to arrive to the topological result, Theorem \ref{thm : CS-TM scheme sum of measures}, at the end of this section. 

\medskip

Recalling that a subspace of $\emp(\mu,n)$ consisting only of regular convex partitions can be identified with the configuration space $\conf(\R^d,n)$, see \cite[Sec.\,2]{Blagojevic2014} for more details, we obtain the following lemma.

\begin{lemma}
	\label{lemma : alpha tilde}
	There exists an $\sym_n$-equivariant map
	\[
	\widetilde{\alpha}\colon \conf(\R^d,n) \longrightarrow \emp(\mu,n).
	\]
\end{lemma}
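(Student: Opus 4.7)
The plan is to transport the construction used to establish Lemma \ref{lemma : alpha} verbatim, observing that the hypothesis ``positive finite absolutely continuous'' is closed under finite sums: the measure $\mu=\mu_1+\cdots+\mu_m$ is again a positive finite absolutely continuous measure on $\R^d$. Consequently, any recipe that parametrizes regular equipartitions of a single such measure by $\conf(\R^d,n)$ in an $\sym_n$-equivariant manner applies just as well with $\mu$ in place of $\mu_m$, so the lemma should follow by invoking the same cited construction.

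Concretely, following \cite{Blagojevic2014}, I would use the family of generalized Voronoi (power) partitions associated to a configuration $\mathbf{x}=(x_1,\ldots,x_n)\in \conf(\R^d,n)$ and a weight vector $\mathbf{w}\in\R^n$, namely
\[
C_i(\mathbf{x},\mathbf{w}) \,:=\, \{y\in\R^d : \|y-x_i\|^2-w_i \le \|y-x_j\|^2-w_j \text{ for all } j\}.
\]
Restricting to the hyperplane $\{w_1+\cdots+w_n=0\}$, a standard convex-optimization argument (the potential function $\mathbf{w}\mapsto\sum_i \int_{C_i(\mathbf{x},\mathbf{w})}(\|y-x_i\|^2-w_i)\,d\mu(y)+\tfrac{1}{n}\mu(\R^d)\sum_i w_i$ is strictly concave on this hyperplane precisely because $\mu$ is absolutely continuous) produces a unique maximizer $\mathbf{w}^*(\mathbf{x})$, and at that maximizer one has $\mu(C_i(\mathbf{x},\mathbf{w}^*(\mathbf{x})))=\tfrac{1}{n}\mu(\R^d)$ for every $i$. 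Stability of the strictly concave optimum makes $\mathbf{w}^*(\mathbf{x})$ continuous in $\mathbf{x}$, and setting
\[
\widetilde{\alpha}(\mathbf{x})\,:=\,\big(C_1(\mathbf{x},\mathbf{w}^*(\mathbf{x})),\ldots,C_n(\mathbf{x},\mathbf{w}^*(\mathbf{x}))\big)
\]
yields a continuous map into $\emp(\mu,n)$. Equivariance is automatic: the defining recipe is symmetric in the indexing, so permuting the sites permutes the optimal weights accordingly, hence permutes the cells as required.

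The only routine verifications I anticipate are (i) checking that each $C_i$ has non-empty interior so that the output really lies in $\emp(\mu,n)$ in the sense of Definition \ref{def:partition}, which is immediate because the sites $x_i$ are pairwise distinct and each $x_i$ lies in the interior of its own power cell for any finite weight vector; and (ii) confirming that the maximizer is indeed uniquely determined on $\{\sum w_i=0\}$, which uses absolute continuity of $\mu$ to rule out a constant direction in the potential. There is no genuine obstacle: the entire content of the lemma is that $\mu$ satisfies exactly the same hypotheses as $\mu_m$, so the result of \cite[Sec.\,2]{Blagojevic2014} invoked for Lemma \ref{lemma : alpha} gives $\widetilde{\alpha}$ with no further topological input.
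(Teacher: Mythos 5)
Your proposal takes the same route the paper does: the paper justifies Lemma \ref{lemma : alpha tilde} by a one-line citation to \cite[Sec.~2]{Blagojevic2014} for the power-diagram parametrization of regular equipartitions by $\conf(\R^d,n)$, and your key observation---that the sum $\mu=\mu_1+\cdots+\mu_m$ is again a positive finite absolutely continuous measure, so the cited construction applies verbatim with $\mu$ in place of $\mu_m$---is exactly what underlies that citation. The weighted-Voronoi/concave-potential argument you spell out is the content of the cited section, so nothing further is needed. One small slip in your verification (i): it is not true that $x_i$ lies in the interior of its power cell $C_i(\mathbf{x},\mathbf{w})$ for every finite weight vector (choosing $w_j-w_i>\|x_i-x_j\|^2$ pushes $x_i$ out of $C_i$, which can even be empty for bad weights). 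The correct reason the output satisfies Definition~\ref{def:partition} is that at the optimal weights each $C_i$ has $\mu(C_i)=\tfrac{1}{n}\mu(\R^d)>0$, and since $\mu$ is absolutely continuous a closed convex set of positive $\mu$-measure must have non-empty interior.
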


\medskip

Denote by $\widetilde{P}=P(\widetilde{\A})$ the intersection poset of the affine arrangement $\widetilde{\A}$. 
Its elements are given by 
\[
\widetilde{p}_\Lambda := \bigcap_{(i,I) \in \Lambda} \widetilde{L}_{i,I} = 
\big\{ (y_{jk}) \in \widetilde{V} \subseteq \R^{m \times n} : y_{ji}=0, \text{ for all } 1 \leq i \leq n \text{ and } j\in I_i\big\},
\]
where $\Lambda \subseteq [n] \times  \binom{[m]}{m-c+1}$ and $I_i:= \bigcup_{(i,I)\in \Lambda} I$. 
An element $\widetilde{p}_{\Lambda}$ can also be seen as an $m \times n$ matrix $(a_{jk}) $, where $a_{jk}=0$ if and only if $j\in I_k$.

\medskip

Next we consider a $\widetilde{P}$-diagram $\widetilde{\C}$ determined by the arrangement $\widetilde{\A}=\widetilde{\A}(m,n,c)$. 
More precisely, we define $\widetilde{\C}(\widetilde{p}_\Lambda):= \widetilde{p}_\Lambda$ and 
$\widetilde{\C}(\widetilde{p}_{\Lambda'}\supseteq \widetilde{p}_{\Lambda''})\colon \widetilde{p}_{\Lambda''}\longrightarrow \widetilde{p}_{\Lambda'}$ to be the inclusion.  
The Equivariant Projection Lemma \cite[Lem.\,2.1]{Sundaram1997} implies the following.
 
\begin{lemma}
	\label{lemma : beta tilde}
	The projection map  
	\[
	\hocolim_{\widetilde{P}}\widetilde{\C} \longrightarrow \colim_{\widetilde{P}}\widetilde{\C}  = \bigcup\widetilde{\A}
	\]
	is an $\sym_n$-equivariant homotopy equivalence.
	In particular, there exists an $\sym_n$-equivariant map
	\[
	\widetilde{\beta}\colon \bigcup\widetilde{\A}\longrightarrow \hocolim_{\widetilde{P}}\widetilde{\C}.
	\]
\end{lemma}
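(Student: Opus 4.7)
The plan is to invoke the Equivariant Projection Lemma of Sundaram--Welker \cite[Lem.\,2.1]{Sundaram1997} in exactly the same way it was used to establish Lemma~\ref{lemma : beta}. First I would verify that the setup satisfies the hypotheses of that lemma: every element $\widetilde{p}_\Lambda$ of the intersection poset $\widetilde{P}$ is an affine subspace of $\widetilde{V}$, hence contractible, and every structure morphism of the diagram $\widetilde{\C}$ is the inclusion of one affine subspace into another, hence a closed cofibration.

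The equivariance ingredient is the only new item to check, and it is straightforward. The symmetric group $\sym_n$ acts on $\R^{m \times n}$ by permuting columns, $(y_{jk}) \mapsto (y_{j,\pi^{-1}(k)})$. Both defining relations of $\widetilde{V}$ (the row sums $\sum_k y_{jk} = \mu_j(\R^d)$ and the column sums $\sum_j y_{jk} = \tfrac{1}{n}\mu(\R^d)$) are symmetric in the column index, so this action restricts to $\widetilde{V}$; moreover it sends $\widetilde{L}_{i,I}$ to $\widetilde{L}_{\pi(i), I}$, so the arrangement $\widetilde{\A}$ is $\sym_n$-invariant. Consequently $\sym_n$ acts on $\widetilde{P}$ as a poset automorphism, and the diagram $\widetilde{\C}$ is $\sym_n$-equivariant in the precise sense required by the Projection Lemma.

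With the hypotheses verified, the lemma provides an $\sym_n$-equivariant homotopy equivalence $\hocolim_{\widetilde{P}} \widetilde{\C} \to \colim_{\widetilde{P}} \widetilde{\C}$. The identification $\colim_{\widetilde{P}} \widetilde{\C} = \bigcup \widetilde{\A}$ is tautological, since all morphisms are inclusions among subspaces of the common ambient space $\widetilde{V}$: the colimit glues each $\widetilde{p}_\Lambda$ along identity inclusions into its overspaces and therefore coincides as a topological space with the set-theoretic union of the $\widetilde{L}_{i,I}$. Choosing any $\sym_n$-equivariant homotopy inverse of the resulting projection then produces the asserted map $\widetilde{\beta}\colon \bigcup\widetilde{\A} \to \hocolim_{\widetilde{P}} \widetilde{\C}$.

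The ``main obstacle'' is really only notational: the entire argument is mechanically parallel to the proof of Lemma~\ref{lemma : beta}, the only differences being that the index set for $I$ now ranges over $\binom{[m]}{m-c+1}$ instead of $\binom{[m-1]}{m-c+1}$, and that the ambient affine space $\widetilde{V}$ carries additional row-sum constraints reflecting the equipartition of $\mu$. Neither change affects the combinatorial structure of the intersection poset nor the topological input to the Projection Lemma, so no new ideas are required.
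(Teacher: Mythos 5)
Your proof is correct and takes exactly the same route as the paper: the paper's entire justification for Lemma~\ref{lemma : beta tilde} is a one-line invocation of the Equivariant Projection Lemma \cite[Lem.\,2.1]{Sundaram1997}, just as for Lemma~\ref{lemma : beta}. Your additional verifications (contractibility of the affine pieces, cofibrancy of the inclusions, $\sym_n$-invariance of $\widetilde{\A}$, and the identification of the colimit with the union) are all accurate and simply make explicit what the paper leaves to the reader.
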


\medskip
Recall that $Q$ denotes the face poset of an $(n-1)$-dimensional simplex, and define a map $\widetilde{\varphi}: \widetilde{P} \to Q$ by 
\[
\widetilde{\varphi}(\widetilde{p}_\Lambda) := \big\{i \in [n] : (i,I) \in \Lambda \text{ for some } I \subseteq [m]\big\}.
\]
Additionally, denote the poset $\widetilde{\varphi}(\widetilde{P}) \subseteq Q$ by $Q'$. 
Note that if $q,r \in Q$ are such that $q \in Q'$ and $r \leq q$, then $r$ is also an element of $Q'$. 
In particular, if $q=\hat{1}$ is the maximal element of $Q$ and $q\in Q'$, then $Q'=Q$.

\medskip
Let $\widetilde{\D}$ be the homotopy pushdown of the diagram $\widetilde{\C}$ along the map $\widetilde{\varphi}$ over $ Q'$. 
This means that
\begin{eqnarray*}
\widetilde{\D}(q):=\hocolim_{\widetilde{\varphi}^{-1}( Q'_{\geq q})} \widetilde{\C}|_{\widetilde{\varphi}^{-1} ( Q'_{\geq q})} \ \simeq \ \Delta(\widetilde{\varphi}^{-1}( Q'_{\geq q})) 
\end{eqnarray*}
for $q \in  Q'$, and the map $\widetilde{\D}(q\geq r)\colon \widetilde{\D}(q)\longrightarrow \widetilde{\D}(r)$
is the corresponding inclusion for every $q\geq r$ in $ Q'$. 
Once more, the Homotopy Pushdown Lemma \cite[Prop.\,3.12]{Welker1999} adapted to equivariant setting yields the following fact. 

\begin{lemma}
	\label{lemma : gamma tilde}
	There is an $\sym_n$-equivariant homotopy equivalence 
	\[
	\hocolim_{ Q'}\widetilde{\D} \longrightarrow \hocolim_{\widetilde{P}}\widetilde{\C} .
	\]
	In particular, there exists an $\sym_n$-equivariant map
	\[
	\widetilde{\gamma}\colon \hocolim_{\widetilde{P}}\widetilde{\C}\longrightarrow \hocolim_{ Q'}\widetilde{\D}.
	\]
\end{lemma}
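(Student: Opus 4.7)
The plan is to reduce Lemma \ref{lemma : gamma tilde} to a direct equivariant application of the Homotopy Pushdown Lemma \cite[Prop.\,3.12]{Welker1999}, in exact parallel with the argument sketched for Lemma \ref{lemma : gamma}. The only substantive content is to verify that the entire setup is compatible with the $\sym_n$-action.

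First I would record the equivariance of all the combinatorial data at play. The group $\sym_n$ acts on $\widetilde{\A}$ by permuting the column index $i$, and this action restricts to an action by poset automorphisms on the intersection poset $\widetilde{P}$. On $Q$, regarded as the face poset of the $(n-1)$-simplex with vertex set $[n]$, $\sym_n$ acts by permuting vertices, and the subposet $Q' \subseteq Q$ is $\sym_n$-invariant. Since $\widetilde{\varphi}(\widetilde{p}_\Lambda)$ is by definition the set of column indices that carry at least one forced zero in the matrix representation of $\widetilde{p}_\Lambda$, the map $\widetilde{\varphi}$ is $\sym_n$-equivariant. The diagram $\widetilde{\C}$ is tautologically $\sym_n$-equivariant, so the homotopy pushdown $\widetilde{\D}$ inherits an $\sym_n$-action: for $\pi \in \sym_n$ the bijection $\widetilde{\varphi}^{-1}(Q'_{\geq q}) \to \widetilde{\varphi}^{-1}(Q'_{\geq \pi \cdot q})$ of subposets induces a homeomorphism $\widetilde{\D}(q) \to \widetilde{\D}(\pi \cdot q)$ compatible with the structure maps.

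Second I would apply the Homotopy Pushdown Lemma, whose construction of the equivalence
\[
\hocolim_{Q'} \widetilde{\D} \longrightarrow \hocolim_{\widetilde{P}} \widetilde{\C}
\]
assembles the identities on each space $\widetilde{\C}(\widetilde{p}_\Lambda)$ into a coherent map between the two homotopy colimits. Since these identity maps commute with the $\sym_n$-action by construction, the resulting homotopy equivalence is $\sym_n$-equivariant. To produce the map $\widetilde{\gamma}$ going in the opposite direction, I would invoke the equivariant Whitehead theorem: both homotopy colimits admit natural $\sym_n$-CW structures coming from the simplicial model of the homotopy colimit (the nerve of the Grothendieck construction on the relevant diagram), in which the $\sym_n$-action is visibly cellular. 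An equivariant homotopy equivalence between such $\sym_n$-CW complexes admits an equivariant homotopy inverse, supplying $\widetilde{\gamma}$.

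The main obstacle is purely bookkeeping rather than a genuine mathematical difficulty: one must confirm that each construction and homotopy appearing in the proof of the non-equivariant Homotopy Pushdown Lemma can be chosen in an $\sym_n$-compatible fashion. Because the simplicial models for homotopy colimits over posets are entirely combinatorial, equivariance follows automatically at every step. In this respect the argument is formally identical to that of Lemma \ref{lemma : gamma}, with the tilded objects $\widetilde{\A}, \widetilde{P}, \widetilde{\C}, \widetilde{\D}, \widetilde{\varphi}, Q'$ replacing their non-tilded counterparts throughout; no new topological input is required.
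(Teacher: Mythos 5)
Your proposal is correct and takes the same approach as the paper, which simply invokes the Homotopy Pushdown Lemma of Welker--Ziegler--\v{Z}ivaljevi\'c ``adapted to the equivariant setting,'' exactly as done for the untilded Lemma \ref{lemma : gamma}. You spell out the equivariance bookkeeping (that $\widetilde{\varphi}$, $Q'$, $\widetilde{\C}$, $\widetilde{\D}$ all carry compatible $\sym_n$-actions) and the passage to a homotopy inverse via the equivariant Whitehead theorem, which the paper leaves implicit; this is the right level of justification, provided one notes that the adapted Pushdown Lemma produces a $\sym_n$-homotopy equivalence (an equivalence on fixed-point subcomplexes, not merely an underlying equivalence that happens to be equivariant), which holds because the simplicial model restricts combinatorially to fixed-point subposets.
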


\medskip
Finally, we consider another $ Q'$-diagram $\widetilde{\E}$ by setting for $q\in  Q'$ that
\begin{eqnarray*}
\widetilde{\E}(q) = \begin{cases}
		\Delta(\widetilde{\varphi}^{-1}(\{\hat{1}\})), & \textrm{ if } q=\hat{1}\in  Q' \textrm{ is the meximum of } Q, \\
		\pt, & \textrm{ otherwise,}
		\end{cases}
\end{eqnarray*}
and the map $\widetilde{\E}(q\geq r)$ to be the constant map for every $q\geq r$ in $ Q'$.
Similarly as we have done it in Section \ref{sec:Non-existence of equivariant maps I}, we define a morphism of diagrams $(\widetilde{\Psi},\widetilde{\psi})\colon \widetilde{\D}\longrightarrow\widetilde{\E}$, where $\widetilde{\psi}\colon  Q'\longrightarrow  Q'$ is the identity map, and $\widetilde{\Psi}(q)\colon \widetilde{\D}(q)\longrightarrow \widetilde{\E}(q)$ is the identity map when $q=\hat{1}$ is the maximal element in $Q$, and constant map otherwise.
Since the morphism $(\widetilde{\Psi},\widetilde{\psi})$ of diagrams induces an $\sym_n$-equivariant map between associated homotopy colimits, we have established the following.

\begin{lemma}
	\label{lemma : delta tilde}
	There exists an $\sym_n$-equivariant map
	\[
	\widetilde{\delta}\colon \hocolim_{Q'}\widetilde{\D}\longrightarrow \hocolim_{ Q'}\widetilde{\E}.
	\]
\end{lemma}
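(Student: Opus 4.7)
The plan is to establish the existence of $\widetilde{\delta}$ by invoking the functoriality of the homotopy colimit construction with respect to morphisms of diagrams over a fixed poset, following exactly the same template as Lemma~\ref{lemma : delta} in the one-measure setting. Concretely, the strategy is to verify that $(\widetilde{\Psi},\widetilde{\psi})\colon\widetilde{\D}\longrightarrow\widetilde{\E}$ is a bona fide morphism of $\sym_n$-equivariant $Q'$-diagrams, and then appeal to the standard fact that $\hocolim$ is a functor from the category of such diagrams to $\sym_n$-spaces (see e.g.\ \cite[Sec.\,3]{Welker1999}).

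First I would check that the squares defining $(\widetilde{\Psi},\widetilde{\psi})$ commute. Since $\widetilde{\psi}=\id_{Q'}$, for every relation $q\geq r$ in $Q'$ one must verify
\[
\widetilde{\E}(q\geq r)\circ\widetilde{\Psi}(q) \ = \ \widetilde{\Psi}(r)\circ\widetilde{\D}(q\geq r).
\]
This splits into three easy cases. If both $q$ and $r$ differ from $\hat{1}$, then $\widetilde{\E}(q)=\widetilde{\E}(r)=\pt$ and both compositions are the unique map to the point. If $q=r=\hat{1}$, the square is the identity square. The only genuine case is $q=\hat{1}>r$: then $\widetilde{\Psi}(\hat{1})=\id$, $\widetilde{\Psi}(r)$ is constant, $\widetilde{\E}(r)=\pt$, and $\widetilde{\E}(\hat{1}\geq r)$ is the constant map, so both sides land in $\pt$ and agree.

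Next I would verify $\sym_n$-equivariance. The symmetric group acts on $Q$ by permuting the ground set $[n]$ of the simplex and on $\widetilde{P}$ by permuting the column indices of the matrices representing the intersection elements $\widetilde{p}_\Lambda$; by construction the surjection $\widetilde{\varphi}$ is $\sym_n$-equivariant, hence $Q'=\widetilde{\varphi}(\widetilde{P})$ is an $\sym_n$-invariant subposet, and the maximum $\hat{1}\in Q'$ (when present) is $\sym_n$-fixed. Consequently $\widetilde{\varphi}^{-1}(\{\hat{1}\})$ carries an induced $\sym_n$-action, which makes $\widetilde{\E}$ an $\sym_n$-diagram over $Q'$, and both $\widetilde{\Psi}(\hat{1})=\id$ and the constant maps $\widetilde{\Psi}(q)$ for $q\neq\hat{1}$ are automatically equivariant.

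Finally, since the assignment $\D\mapsto\hocolim_{Q'}\D$ is functorial with respect to morphisms of $\sym_n$-equivariant $Q'$-diagrams, the morphism $(\widetilde{\Psi},\widetilde{\psi})$ induces the desired $\sym_n$-equivariant map $\widetilde{\delta}\colon\hocolim_{Q'}\widetilde{\D}\longrightarrow\hocolim_{Q'}\widetilde{\E}$. I do not expect any serious obstacle here: the argument is a direct translation of Lemma~\ref{lemma : delta} with the only change being that the ambient arrangement carries the extra column-sum constraints defining $\widetilde{V}$, which play no role in the diagrammatic manipulation.
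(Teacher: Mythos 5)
Your argument is exactly the paper's: you verify that $(\widetilde{\Psi},\widetilde{\psi})$ is a morphism of $\sym_n$-equivariant $Q'$-diagrams and then invoke the functoriality of $\hocolim$ as in \cite[Sec.\,3]{Welker1999}, which is precisely how the paper obtains $\widetilde{\delta}$ (with the square-commutativity and equivariance checks that the paper leaves implicit). No gaps; this is the intended route.
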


\medskip	
Just like in Section \ref{sec:Non-existence of equivariant maps I}, the final lemma describes the $\hocolim_{Q'}\widetilde{\E}$ up to an $\sym_n$-equivariant homotopy.

\begin{lemma}
\label{lemma : hocolim E tilde}
~~
\begin{compactenum}[\rm\qquad (i)]
\item If $\hat{1}\in Q'$, that is if $Q'=Q$, then there exists an $\sym_n$-equivariant homotopy equivalence
\[
\hocolim_{Q}\widetilde{\E}\simeq \Delta (Q{\setminus}\{\hat{1}\}) * \Delta(\widetilde{\varphi}^{-1}(\{\hat{1}\}))
\]
where $\hat{1}$ is the maximaum of $Q$, and $\dim\big(  \Delta(\widetilde{\varphi}^{-1}(\hat{1}))\big) =nc-n-\max\{m,n\}$.
In particular, there exists an $\sym_n$-equivariant map
\[
\widetilde{\eta}\colon \hocolim_{Q}\widetilde{\E}\longrightarrow \Delta (Q{\setminus}\{\hat{1}\}) * \Delta(\widetilde{\varphi}^{-1}(\{\hat{1}\})).
\]

\item If $\hat{1}\notin Q'$ then there exists an $\sym_n$-equivariant homotopy equivalence
\[
\hocolim_{Q'}\widetilde{\E}\simeq \Delta (Q'),
\]
where $\dim ( \Delta (Q'))\leq n-2$.
In particular, there exists an $\sym_n$-equivariant map
\[
\widetilde{\eta}\colon \hocolim_{Q'}\widetilde{\E}\longrightarrow \Delta (Q').
\]
\end{compactenum}
\end{lemma}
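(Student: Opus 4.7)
The plan is to imitate the proof of Lemma \ref{lemma : hocolim E} almost verbatim, with the one non-trivial change being the dimension count, where one has to take into account the extra column-sum constraint built into $\widetilde{V}$ that was absent in $V$. Both cases rest on the Wedge Lemma \cite[Lem.\,4.9]{Welker1999}, which applies here because all structure maps of $\widetilde{\E}$ are constant.

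For case (i), when $\hat{1}\in Q'$ we automatically have $Q'=Q$, so the poset $Q$ has a maximum and $\Delta(Q)\simeq\pt$. The Wedge Lemma then gives
\[
\hocolim_{Q}\widetilde{\E} \ \simeq \ \bigvee_{q \in Q}\bigl(\Delta(Q_{<q}) * \widetilde{\E}(q)\bigr) \vee \Delta(Q).
\]
For every $q\neq\hat{1}$ the factor $\widetilde{\E}(q)=\pt$ makes the corresponding wedge summand contractible, so only the $q=\hat{1}$ summand survives, producing $\Delta(Q{\setminus}\{\hat{1}\})*\Delta(\widetilde{\varphi}^{-1}(\{\hat{1}\}))$. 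Because the non-trivial summand corresponds to the unique $\sym_n$-fixed element $\hat{1}\in Q$, this homotopy equivalence can be chosen $\sym_n$-equivariantly, exactly as in Lemma \ref{lemma : hocolim E}.

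The only genuinely new content is the dimension of $\Delta(\widetilde{\varphi}^{-1}(\{\hat{1}\}))$. Elements of $\widetilde{\varphi}^{-1}(\{\hat{1}\})$ are $m\times n$ zero-patterns subject to three conditions: at least $m-c+1$ zeros in every column (to live in $\widetilde{P}$ and map to $\hat{1}$), at most $n-1$ zeros in every row (so the row sum $\mu_j(\R^d)>0$ can be achieved), and at most $m-1$ zeros in every column (so the column sum $\tfrac{1}{n}\mu(\R^d)>0$ can be achieved). Thus the support of a maximal element must cover every row and every column; minimizing the size of such a covering gives $\max\{m,n\}$ entries of support, hence $mn-\max\{m,n\}$ zeros. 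The minimal elements carry exactly $n(m-c+1)$ zeros. A maximal chain in $\widetilde{\varphi}^{-1}(\{\hat{1}\})$ is obtained by deleting zeros one at a time, so its length, and therefore $\dim\Delta(\widetilde{\varphi}^{-1}(\{\hat{1}\}))$, equals
\[
\bigl(mn-\max\{m,n\}\bigr)-n(m-c+1) \ = \ nc-n-\max\{m,n\}.
\]
The main obstacle is precisely this counting, since the presence of the column-sum constraint is what distinguishes the $\widetilde{\A}$ case from Lemma \ref{lemma : hocolim E}; it is what introduces the $\max\{m,n\}$ in place of the $2n-1$ appearing there.

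For case (ii), if $\hat{1}\notin Q'$ then by the downward-closedness of $Q'$ in $Q$ one has $\widetilde{\E}(q)=\pt$ for every $q\in Q'$, and the Wedge Lemma collapses to
\[
\hocolim_{Q'}\widetilde{\E} \ \simeq \ \bigvee_{q\in Q'}\bigl(\Delta(Q'_{<q})*\pt\bigr) \vee \Delta(Q') \ \simeq \ \Delta(Q'),
\]
again $\sym_n$-equivariantly. Finally $Q'\subseteq Q{\setminus}\{\hat{1}\}$, and $\Delta(Q{\setminus}\{\hat{1}\})$ is the boundary of the $(n-1)$-simplex, so $\dim\Delta(Q')\le n-2$, completing the proof.
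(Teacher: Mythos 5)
Your proof is correct and takes essentially the same approach as the paper's: defer to the proof of Lemma \ref{lemma : hocolim E} (via the Wedge Lemma), with the new content being the dimension computation of $\Delta(\widetilde{\varphi}^{-1}(\{\hat{1}\}))$, which you carry out correctly by counting zeros in maximal and minimal elements, taking into account the additional column-sum constraint of $\widetilde{V}$ that forces at most $m-1$ zeros per column and produces $\max\{m,n\}$ in place of the earlier $2n-1$.
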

\begin{proof}
The proof of the claim {\rm (ii)} is identical to the proof of the second part of Lemma \ref{lemma : hocolim E}. For the claim {\rm (i)} it suffices to compute the dimension of the simplicial complex $\Delta(\widetilde{\varphi}^{-1}(\{\hat{1}\}))$, since the rest of the proof follows the lines of the proof of the first part of Lemma \ref{lemma : hocolim E}.

The elements of the poset $\widetilde{\varphi}^{-1}(\{\hat{1}\})$ are presented by matrices $\widetilde{p}_\Lambda = (a_{jk})$ that contain zeros in every column. The partial order is given by 
\[
\widetilde{p}_\Lambda \leq \widetilde{p}_{\Lambda'}  \ \Longleftrightarrow \ (\forall j\in [m]) \ (\forall k \in [n]) \ a_{jk}=0 \Rightarrow a'_{jk}=0,
\]
where $\widetilde{p}_\Lambda = (a_{jk})$ and $\widetilde{p}_{\Lambda'} = (a'_{jk})$ are elements of the poset $\widetilde{\varphi}^{-1}(\{\hat{1}\}) \subseteq \widetilde{P}$. 
Maximal chains in $\widetilde{\varphi}^{-1}(\{\hat{1}\})$ can be obtained by removing zeros one by one from a matrix that represents a maximal element, taking care of the fact that every column has to contain at least $m-c+1$ zeros. 
The maximal elements are presented by matrices that have at most $n-1$ zeros in each row, and at most $m-1$ zeros in each column. 
Thus, maximal elements are presented by matrices with $mn-\max(m,n)$ zeros. 
The minimal elements, on the other hand, are presented by matrices that contain $n(m-c+1)$ zeros. 
Therefore, the dimension of $\Delta(\widetilde{\varphi}^{-1}(\hat{1}))$ is $nc-n-\max\{m,n\}\geq 0$.
Since $c\geq 2$, this implies that $n(c-1) \geq m$.
\end{proof}

\medskip
Now we are ready to prove the central result about the non-existence of an $\sym_n$-equivariant map $\emp(\mu,n) \longrightarrow \bigcup\widetilde{\A}$.
 
\begin{theorem}
\label{thm:no map sum of measures}
Let $d\geq 2$, $m\geq 2$, and $c\geq 2$ be integers, and let $n=p^k$ be a prime power.
If 
\begin{compactenum}[\rm\qquad (a)]
\item $n(c-1) \geq m$ and $\max\{m,n\}\geq n(c-d)+\tfrac{dn}{p}-\tfrac{n}{p}+n$, or
\item $n(c-1) < m$,
\end{compactenum}
 then there is no $\sym_n$-equivariant map
\begin{equation}
	\label{eq : map that should not exist - 01 tilde}
	 \emp(\mu,n) \longrightarrow \bigcup\widetilde{\A}(m,n,c),
\end{equation}
where $\mu = \mu_1 + \dots + \mu_m$ is the sum of $m$  finite absolutely continuous measures on $\R^d$, and the affine arrangement $\widetilde{\A}(m,n,c)$ is as defined in \eqref{eq:A sum of measures}.
\end{theorem}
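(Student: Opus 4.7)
The plan is to mirror the argument used in the proof of Theorem \ref{thm:no map one measure}, with the untilded objects replaced by their tilded counterparts from Section \ref{sec:Non-existence of equivariant maps II}. Arguing by contradiction, we assume an $\sym_n$-equivariant map $f\colon\emp(\mu,n)\longrightarrow \bigcup\widetilde{\A}(m,n,c)$ exists, and compose it along Lemmas \ref{lemma : alpha tilde}, \ref{lemma : beta tilde}, \ref{lemma : gamma tilde}, \ref{lemma : delta tilde}, \ref{lemma : hocolim E tilde} with $\widetilde{\alpha}$ precomposed, to obtain an $\sym_n$-equivariant map
\[
g\colon\conf(\R^d,n)\longrightarrow \widetilde{X},
\]
where $\widetilde{X}=\Delta(Q{\setminus}\{\hat{1}\})*\Delta(\widetilde{\varphi}^{-1}(\{\hat{1}\}))$ in case~(a) (when $\hat{1}\in Q'$, equivalently $n(c-1)\geq m$), and $\widetilde{X}=\Delta(Q')$ in case~(b) (when $\hat{1}\notin Q'$, equivalently $n(c-1)<m$).

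Next I restrict the action to the elementary abelian $p$-subgroup $G\cong(\Z/p)^k$ embedded in $\sym_n$ by the regular representation, and compare the Serre spectral sequences $E^{*,*}_*(\lambda)$ and $E^{*,*}_*(\rho)$ of the Borel-construction fibrations with fibres $\conf(\R^d,n)$ and $\widetilde{X}$ over $BG$. The map $g$ induces a morphism $E^{*,*}_*(g)\colon E^{*,*}_*(\rho)\longrightarrow E^{*,*}_*(\lambda)$ with $E^{*,0}_2(g)=\id$. The results of Cohen and of \cite[Thm.\,6.1, Thm.\,6.3]{Blagojevic2015}, already exploited in the proof of Theorem \ref{thm:no map one measure}, give
\[
E^{*,*}_2(\lambda)\cong\cdots\cong E^{*,*}_{(d-1)(n-1)+1}(\lambda)\text{ for }k=1,\qquad
E^{*,*}_2(\lambda)\cong\cdots\cong E^{*,*}_{(d-1)(n-n/p)+1}(\lambda)\text{ for }k\geq 2.
\]

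The contradiction will come from combining this vanishing with non-vanishing of a differential on $E^{*,0}_*(\rho)$ that follows from the localization theorem \cite[Cor.\,1, p.\,45]{Hsiang1975}, since $\widetilde{X}$ is path-connected and has no $G$-fixed point. In case~(a), Lemma \ref{lemma : hocolim E tilde}(i) shows
\[
\dim \widetilde{X}\ =\ (n-2)+(nc-n-\max\{m,n\})+1\ =\ nc-1-\max\{m,n\},
\]
so $E^{i,j}_2(\rho)=0$ for $j\geq nc-\max\{m,n\}$ and the spectral sequence stabilizes on the zero row at page $nc-\max\{m,n\}+1$, forcing a non-trivial differential $d_r$ with $r\leq nc-\max\{m,n\}$ to hit $E^{*,0}_r(\rho)$. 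The numerical hypothesis $\max\{m,n\}\geq n(c-d)+dn/p-n/p+n$ is precisely equivalent to
\[
(d-1)(n-n/p)+1\ \geq\ nc-\max\{m,n\},
\]
so that the corresponding differential $d_r$ is still zero on $E^{*,0}_r(\lambda)$, and $E^{*,0}_2(g)=\id$ produces the contradiction; the case $k=1$ is handled by the stronger bound $(d-1)(n-1)+1$. In case~(b), Lemma \ref{lemma : hocolim E tilde}(ii) gives $\dim\widetilde{X}\leq n-2$, so the spectral sequence of $\rho$ stabilizes by page $n$, and the same localization/spectral-sequence comparison, using $(d-1)(n-1)+1\geq n$ (respectively $(d-1)(n-n/p)+1\geq n$ for $k\geq 2$), yields the contradiction verbatim as in case~(b) of the proof of Theorem \ref{thm:no map one measure}.

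The main obstacle is the arithmetic bookkeeping in case~(a): one must verify that the bound on $\max\{m,n\}$ in the hypothesis lines up exactly with the page number at which $E^{*,0}_*(\rho)$ first changes, so that the relevant differential still lies in the vanishing range of $E^{*,*}_*(\lambda)$. Once this translation is made, the rest of the proof is a straightforward adaptation of the already-established argument for Theorem \ref{thm:no map one measure}, using the tilded diagrams and homotopy pushdown constructions developed in Section \ref{sec:Non-existence of equivariant maps II}.
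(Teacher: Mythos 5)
Your proposal follows exactly the same route as the paper: argue by contradiction, compose the tilde-versions of the auxiliary maps to get $g\colon\conf(\R^d,n)\to\widetilde X$, restrict to the regularly embedded $(\Z/p)^k$, compare the two Borel-construction Serre spectral sequences, use the localization theorem to force a nontrivial differential in $E^{*,*}_*(\widetilde\rho)$, and derive a contradiction from the degeneracy range of $E^{*,*}_*(\lambda)$ established in Cohen's work and \cite{Blagojevic2015}. The overall structure is sound and matches the paper's proof of Theorem~\ref{thm:no map sum of measures}.

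There is, however, an off-by-one slip in the arithmetic translation in case (a). With $D=\dim\widetilde X=nc-\max\{m,n\}-1$, the last page at which a differential can enter the zero row of $E^{*,*}_*(\widetilde\rho)$ is $r=D+1=nc-\max\{m,n\}$, and for the contradiction one needs $E^{*,0}_{r+1}(\lambda)=E^{*,0}_2(\lambda)$, i.e.\ $d_r(\lambda)=0$, which requires $r\leq (d-1)(n-n/p)$, equivalently
\[
(d-1)\Big(n-\frac{n}{p}\Big)+1\ \geq\ nc-\max\{m,n\}+1,
\]
not $\geq nc-\max\{m,n\}$ as you wrote. Your stated inequality unwinds to $\max\{m,n\}\geq n(c-d)+\frac{dn}{p}-\frac{n}{p}+n-1$, which is one weaker than the hypothesis in the theorem and would not rule out a nontrivial $d_r(\lambda)$ when $r=(d-1)(n-n/p)+1$. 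The hypothesis as stated in the theorem does give the needed $+1$, so the argument goes through once the equivalence is stated correctly. Apart from this bookkeeping slip, your proof reproduces the paper's argument.
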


\begin{proof}
It is not surprising that this proof will follow the lines of the proof of Theorem \ref{thm:no map one measure}.
Let $n=p^k$ be a prime power and denote by $G\cong (\Z/p)^k$ a subgroup of the symmetric group $\sym_n$ given by the regular embedding $\mathrm{(reg)} \colon G \longrightarrow \sym_n$.

\medskip
The proof will proceed by contradiction. Therefore, assume that $\widetilde{f}\colon \emp(\mu,n) \longrightarrow \bigcup\widetilde{\A}(m,n,c)$ is a continuous $\sym_n$-equivariant map.
From Lemmas \ref{lemma : alpha tilde}, \ref{lemma : beta tilde}, \ref{lemma : gamma tilde}, \ref{lemma : delta tilde} and \ref{lemma : hocolim E tilde} we again get a composition of $\sym_n$-equivariant maps
\[
\xymatrix{
 \emp(\mu,n) \ar[r]^-{\widetilde{f}} &  \bigcup\widetilde{A}\ar[r]^-{\widetilde{\beta}} & \hocolim_{\widetilde{P}} \widetilde{\C}\ar[r]^-{\widetilde{\gamma}} & \hocolim_{ Q'}\widetilde{\D}\ar[r]^-{\widetilde{\delta}} & \hocolim_{ Q'}\widetilde{\E}\ar[d]_{\widetilde{\eta}}  \\
\conf(\R^d,n)\ar[u]^{\widetilde{\alpha}}\ar@{-->}[rrrr]^-{\widetilde{g}:=\widetilde{\eta}\circ\widetilde{\delta}\circ\widetilde{\gamma}\circ\widetilde{\beta}\circ \widetilde{f}\circ\widetilde{\alpha}} &  &  &  &  \widetilde{X}
}
\]
where
\[
\widetilde{X}:=
\begin{cases}
	\Delta( Q{\setminus}\{\hat{1}\}) * \Delta(\widetilde{\varphi}^{-1}(\{\hat{1}\})), & \text{if } \hat{1}\in  Q',\\
	\Delta( Q'),  &\text{if } \hat{1}\notin  Q'.
	\end{cases} 
\]
It suffices to show that the map $\widetilde{g}$ cannot exist, since that would contradict the existence of the map $\widetilde{f}$.
Actually, we will prove here that there is no continuous $G$-equivariant map
\begin{equation}
		\label{eq : map that should not exist - 02 tilde}
		\conf(\R^d,n)\longrightarrow \widetilde{X}.
\end{equation}

\medskip
We start by considering a continuous $\sym_n$ and also $G$-equivariant map $\widetilde{g}\colon \conf(\R^d,n)\longrightarrow \widetilde{X}$.
It induces a morphism between Borel construction fibrations:
	\[
	\xymatrix{
	EG\times_G  \conf(\R^d,n)\ar[rr]^{\id\times_G \widetilde{g}}\ar[d]_{\lambda} & & EG\times_G \widetilde{X}\ar[d]_{\widetilde{\rho}} \\
	BG\ar[rr]^-{\id}  & & BG,
	}
	\]
which furthermore induces a morphism between the corresponding Serre spectral sequences 
\[
E^{*,*}_*(\widetilde{g})\colon E^{*,*}_*(\widetilde{\rho})\longrightarrow E^{*,*}_*( \lambda ).
\]
Like in the proof of Theorem \ref{thm:no map one measure}, we use the fact that $E^{*,0}_2(\widetilde{g})=\id$. 
Next we analyse the morphism $E^{*,*}_*(\widetilde{g})$.
Since the spectral sequence $E^{*,*}_*(\lambda)$ was already described in the proof of Theorem \ref{thm:no map one measure}, we concentrate here on the spectral sequence $E^{*,*}_*(\widetilde{\rho})$.

\medskip

The Serre spectral sequence of the fibration
	\[
	\xymatrix{
	\widetilde{X}\ar[r] & EG\times_G \widetilde{X}\ar[r] & BG
	}
	\]
	has the $E_2$-term given by
	\[
	E^{i,j}_2(\widetilde{\rho})=H^i(BG;\mathcal{H}^j(\widetilde{X};\F_p))\cong H^i(G;H^j(\widetilde{X};\F_p)).
	\]
	In order to conclude the proof we consider two separate cases.
	
	\medskip
	{\rm (a)} Let $\hat{1}\in Q'$ and let $m$ satisfy the condition of the theorem. 
	Then $n(c-1)\geq m$, so the simplicial complex $\widetilde{X}=\Delta(Q{\setminus}\{\hat{1}\})* \Delta(\widetilde{\varphi}^{-1}(\{\hat{1}\}))$ is at most $(nc-\max\{m,n\}-1)$-dimensional. This implyies that
	$E_2^{i,j}(\widetilde{\rho})=0$ for all $j\geq nc-\max\{m,n\}$,
	 and consequently,
	\begin{equation}
	\label{eq : ss - 03 tilde}
	E_{nc-\max\{m,n\}+1}^{i,j}(\widetilde{\rho}) \ \cong \ E_{nc-\max\{m,n\}+2}^{i,j}(\widetilde{\rho}) \ \cong \ \cdots \ \cong \ E_{\infty}^{i,j}(\widetilde{\rho}). 
	\end{equation}
	
	Once more a consequence of the localization theorem \cite[Cor.\,1, p.\,45]{Hsiang1975} implies that $H^*(G;\F_p)\cong E^{*,0}_2(\widetilde{\rho})\not\cong E^{*,0}_{\infty}(\widetilde{\rho})$, because the path-connected simplical complex $\widetilde{X}$ does not have fixed points with respect to the action of the elementary abelian group $G$.	 
	 Having in mind \eqref{eq : ss - 03 tilde} we conclude that 
    \begin{equation*}
	H^*(G;\F_p) \ \cong \ E^{*,0}_2(\widetilde{\rho}) \ \not\cong \ E_{nc-\max\{m,n\}+1}^{*,0}(\widetilde{\rho}).
	\end{equation*}
	For our proof, without loss of generality, we can assume that
       \begin{equation}
	\label{eq : ss - 05 tilde}
     H^*(G;\F_p) \ \cong \ E^{*,0}_2(\widetilde{\rho}) \ \cong \ E_{nc-\max\{m,n\}-2}^{*,0}(\widetilde{\rho}) \ \not\cong \E_{nc-\max\{m,n\}-1}^{*,0}(\widetilde{\rho}).
    \end{equation}
    Now, the assumption on $m$ and $n$, means for $k=1$
    \[
    (d-1)(n-1)+1\geq nc-\max\{m,n\}+1,
    \]
    and for $k\geq 2$
     \[
    (d-1)\big(n-\frac{n}{p}\big)+1\geq nc-\max\{m,n\}+1.
    \]
    Therefore, the relations \eqref{eq : ss - 01}, \eqref{eq : ss - 02} and \eqref{eq : ss - 05 tilde}, together with the fact that $E^{*,0}_2(\widetilde{g})=\id$, yield a contradiction: the homomorphism $E^{*,0}_{nc-\max\{m,n\}+1}(\widetilde{g})$ sends the zero to a non-zero element.
	This concludes the proof of the theorem in the case when $n(c-1)\geq m$.
	
	\medskip
	{\rm (b)} Let $\hat{1}\notin Q'$, or equivalently $n(c-1)< m$.
	The simplicial complex $\widetilde{X}=\Delta(Q')$ is at most $(n-2)$-dimensional, by Lemma \ref{lemma : hocolim E tilde}(ii).
	Thus, $E_2^{i,j}(\widetilde{\rho})=0$ for all $j\geq n-1$, and furthemore 
	\begin{equation}
	\label{eq : ss - 030 tilde}
	E_{n}^{i,j}(\widetilde{\rho})\ \cong \ E_{n+1}^{i,j}(\widetilde{\rho}) \ \cong \ \cdots \ \cong \ E_{\infty}^{i,j}(\widetilde{\rho}). 
	\end{equation}
	For the same reason as above, we have $H^*(G;\F_p)\cong E^{*,0}_2(\widetilde{\rho})\not\cong E^{*,0}_{\infty}(\widetilde{\rho})$.
	This fact combined with the isomorphisms in \eqref{eq : ss - 030 tilde} yields that  
    \begin{equation*}
	H^*(G;\F_p) \ \cong \ E^{*,0}_2(\widetilde{\rho}) \ \not\cong \ E_{n}^{*,0}(\widetilde{\rho}).
	\end{equation*}
	Again, without loss of generality, we can assume that
       \begin{equation}
	\label{eq : ss - 050 tilde}
     H^*(G;\F_p) \ \cong \ E^{*,0}_2(\widetilde{\rho}) \ \cong \ E_{n-1}^{*,0}(\widetilde{\rho}) \ \not\cong \ E_{n}^{*,0}(\widetilde{\rho}).
    \end{equation}
    In order to complete the proof we need that for $k=1$
    \[
    (d-1)(n-1)+1\geq n,
    \]
    and for $k\geq 2$
     \[
    (d-1)\big(n-\frac{n}{p}\big)+1\geq n.
    \]
    Indeed, both of these inequalities are satisfied, thus the fact that $E^{*,0}_2(\widetilde{g})=\id$ with \eqref{eq : ss - 01}, \eqref{eq : ss - 02} and \eqref{eq : ss - 050 tilde} gives a contradiction: the homomorphism $E^{*,0}_{n}(\widetilde{g})$ sends the zero to a non-zero element.
	This concludes the proof of the theorem in the case when $n(c-1)< m$.	
\end{proof}

\section{Proofs}
\label{sec:proofs}

Finally, in this section proofs of theorems \ref{thm:geometric}, \ref{thm:one measure}, \ref{thm:sum of measures}, \ref{th:referee's}, \ref{thm:optimal}, \ref{th:referee's - 02}, and \ref{th:referee's - 03} will be presented. 
The proofs of theorems \ref{thm:geometric} and \ref{thm:optimal} are completely geometric and they do not involve any topological methods. The proofs of theorems \ref{th:referee's}, \ref{th:referee's - 02}, and \ref{th:referee's - 03} rely on earlier results that use much simpler topological tools than those involved in theorem \ref{thm:one measure} and \ref{thm:sum of measures}.
In particular, the proofs of theorems \ref{thm:geometric}, \ref{th:referee's}, \ref{thm:optimal}, \ref{th:referee's - 02}, and \ref{th:referee's - 03} are independent from Sections \ref{sec:From partitions to equivariant topology} and \ref{sec:Non-existence of equivariant maps}. 
On the other hand, the proofs of Theorem \ref{thm:one measure} and Theorem \ref{thm:sum of measures} heavily depend on the topological results from the previous sections.

\subsection{Proof of Theorem \ref{thm:geometric}} 

Let $d\geq 2$, $m\geq2$, $n\geq 2$ and $c\geq d$ be integers such that $m\geq n(c-d)+d$.
Since the measures $\mu_1, \dots, \mu_m$ are positive, finite, and absolutely continuous with respect to the standard Lebesgue measure, the interiors of their supports are non-empty.
For  each $1 \leq j \leq m$, choose a point $v_j \in \int(\supp (\mu_j))$ in the interior of the support of the measure $\mu_j$.
Set $V:=\{v_1, \dots , v_m\}$. 
Perturb the points $v_1, \dots, v_m$ if necessary, so that they are in general position, that is no $d+1$ of them lie in the same affine hyperplane. 
The set $P:=\conv(V)$ is a $d$-dimensional simplicial polytope in $\R^d$. 
Choose any $(d-2)$-dimensional face $F$ of the polytope $P$.
Since $P$ is simplicial the face $F$ is a simplex and so has $d-1$ vertices that belong to $V$.

\medskip
First, we look for an affine hyperplane $H$ in $\R^d$ with the properties that:
\begin{compactitem}[\qquad---]
\item $F\subseteq H$,
\item $\# (V\cap H)=\# (V\cap F)+1=d$, and
\item $\# (V\cap \int (H^+))=c-d$.
\end{compactitem}
The hyperplane $H$ cuts $\R^d$ into two half-spaces closed half-spaces $H^+$ and $H^-$ such that $H^+$ has positive measure with respect to at least $c$ of the measures $\mu_1, \dots, \mu_m$, because it intersects interiors of supports of at least $c$ measures.
Such a hyperplane exists. 
Indeed, since $F$ is a face of $P$ there exists a supporting hyperplane $H'$ for $F$, that is a hyperplane that contains the face $F$ and one of its closed half-spaces contains $P$, see Figure \ref{fig:R^2 - 1}. 
Rotate the hyperplane $H'$ around the $(d-2)$-dimensional subspace spanned by $F$ to get the hyperplane $H$ such that there are exactly $c-d$ points of $V$ in $\int(H^+)$, and furthermore there is one additional point from $V{\setminus}F$ on $H$.
Since the affine span of $F$ is a hyperplane in $H$ the additional point, say $w$, lies in the relative interior of one of the half-hyperplanes of $H$ determined by $F$.
Denote the half-hyperplanes of $H$ determined by $F$ with $K_0$ and $K_{m-c+1}$ such that $w\in \relint(K_0)$.
In particular, $K_0\cup K_{m-c+1}=H$.

\medskip
The set  $V^-:=V \cap H^-$ is of cardinality $m-c$. 
Consider all half-hyperplanes whose boundary is the affine span of $F$ and contains a point of $V^-$ in its relative interior. 
Since the set $V$ is in general position, there are exactly $m-c$ such half-hyperplanes. 
Label them $K_1, \dots, K_{m-c}$ in order, starting from the half-hyperplane that forms the smallest angle with the half-hyperplane $K_0$, as illustrated in Figure \ref{fig:R^2 - 3}. 
The affine span of $F$ and the half-hyperplanes
\[
K_0, \ K_{c-d}, \ K_{2(c-d)}, \ \dots, \ K_{(n-2)(c-d)}, \ K_{m-c+1}
\]
define an $n$-fan whose every region intersects interiors of the supports of at least $c$ of the measures $\mu_1, \dots, \mu_m$.
Indeed, the region defined by
\begin{compactitem}[\qquad---]
\item $H^+$, or equivalently by $K_{m-c+1}$ and $K_0$ contains exactly $c$ points from $V$,
\item $K_0$ and $K_{c-d}$ contains exactly $d-1+c-d+1=c$ points from $V$,
\item $K_{c-d}$ and $K_{2(c-d)}$ contains exactly $d-1+c-d+1=c$ points from $V$,
\item \dots
\item $K_{(n-2)(c-d)}$ and $K_{m-c+1}$ contains $d-1+m-c-(n-2)(c-d)+1=(m-nc+nd-d)+c\geq c$ points from $V$.
\end{compactitem}
An example for $d=2$, $n=5$ and $c=4$ is shown in Figure \ref{fig:R^2 - 4}.

\medskip
Thus, we constructed a convex partition of $\R^d$ such that each piece of the partition has a positive measure with respect to at least $c$ of the measures $\mu_1, \dots, \mu_m$.
This concludes the proof of the theorem. \qed

\begin{figure}[h]
\begin{center}
\subfigure[The face $F$ and the hyperplane $H'$.]{\label{fig:R^2 - 1}\includegraphics[width=0.45\textwidth]{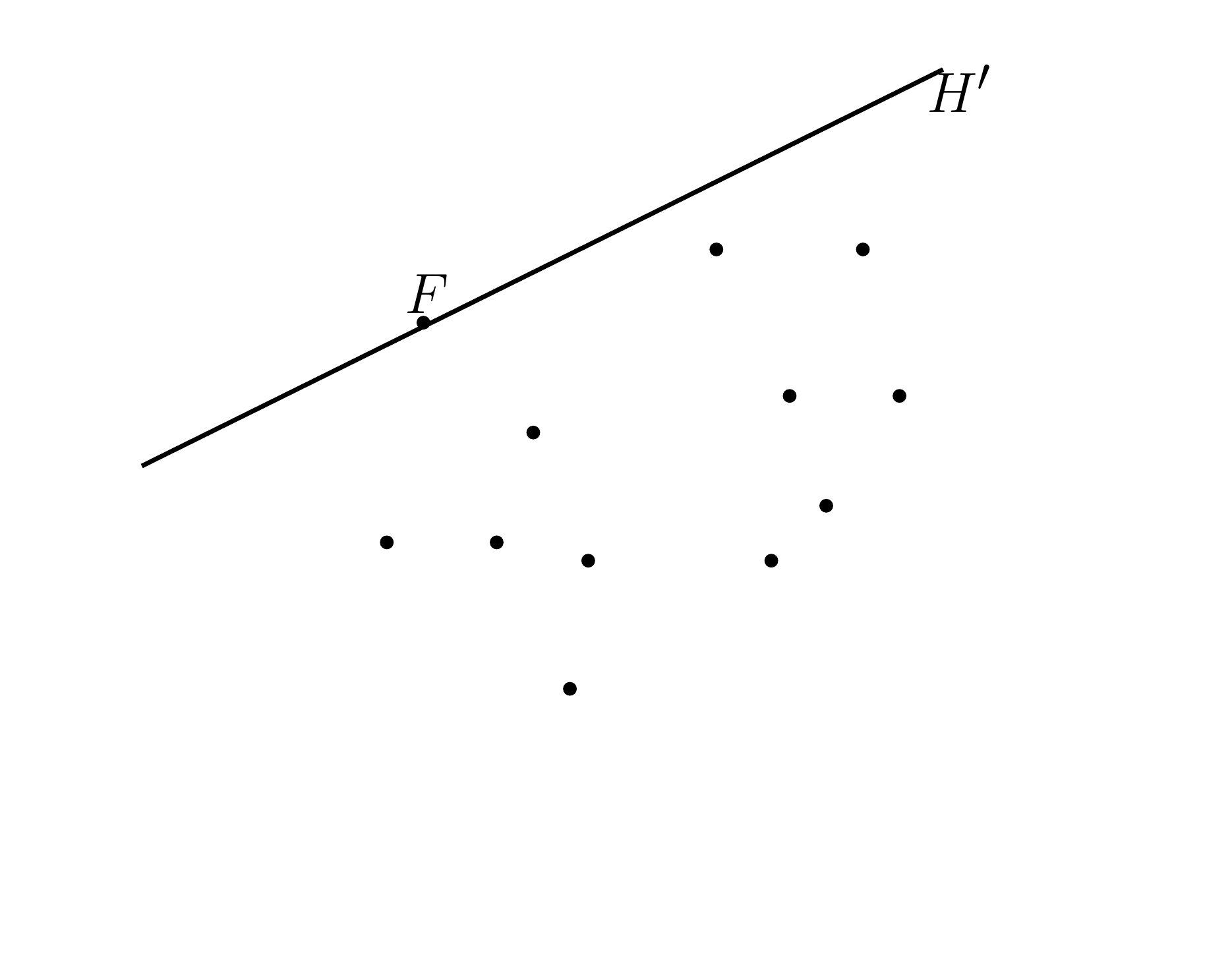}}
\subfigure[The face $F$, the point $w$ and the final position of the hyperplane $H'=H$.]{\label{fig:R^2 - 2}\includegraphics[width=0.45\textwidth]{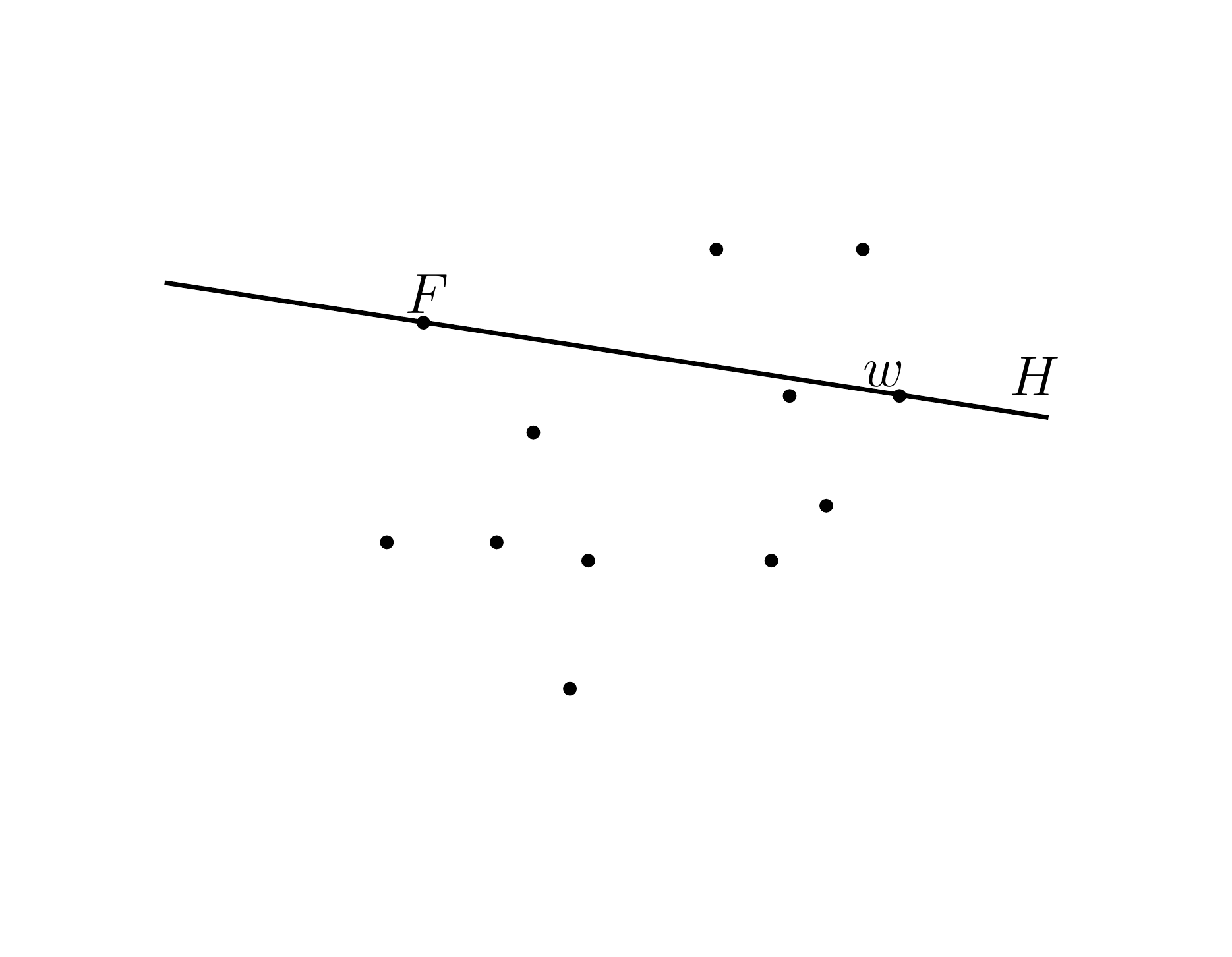}}
\subfigure[Labeling of the half-hyperplanes in half-space $H^-$.]{\label{fig:R^2 - 3}\includegraphics[width=0.45\textwidth]{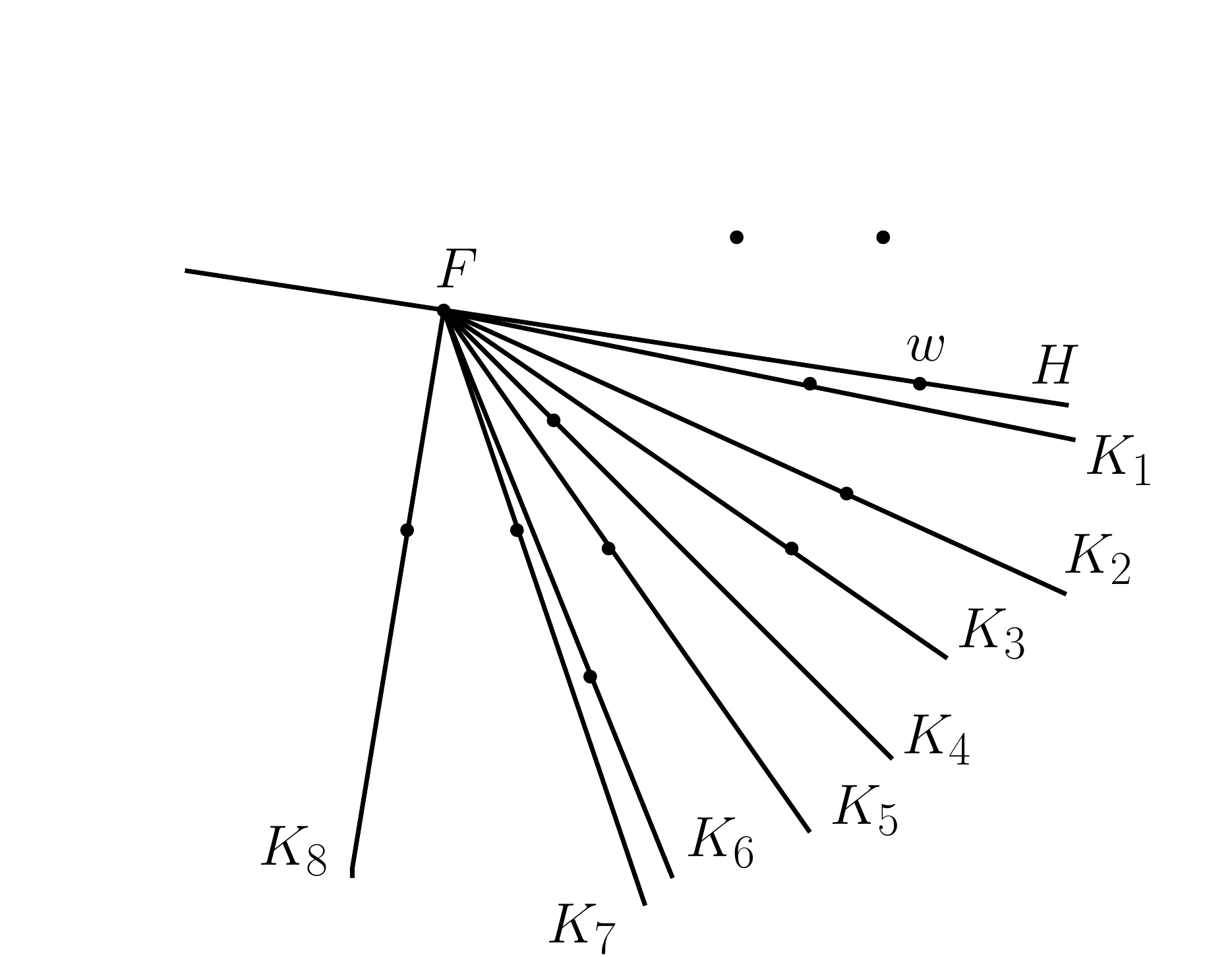}}
\subfigure[A $5$-fan that partitions $\R^2$ into convex pieces so that each piece has positive measure with respect to at least $4$ measures. ]{\label{fig:R^2 - 4}\includegraphics[width=0.45\textwidth]{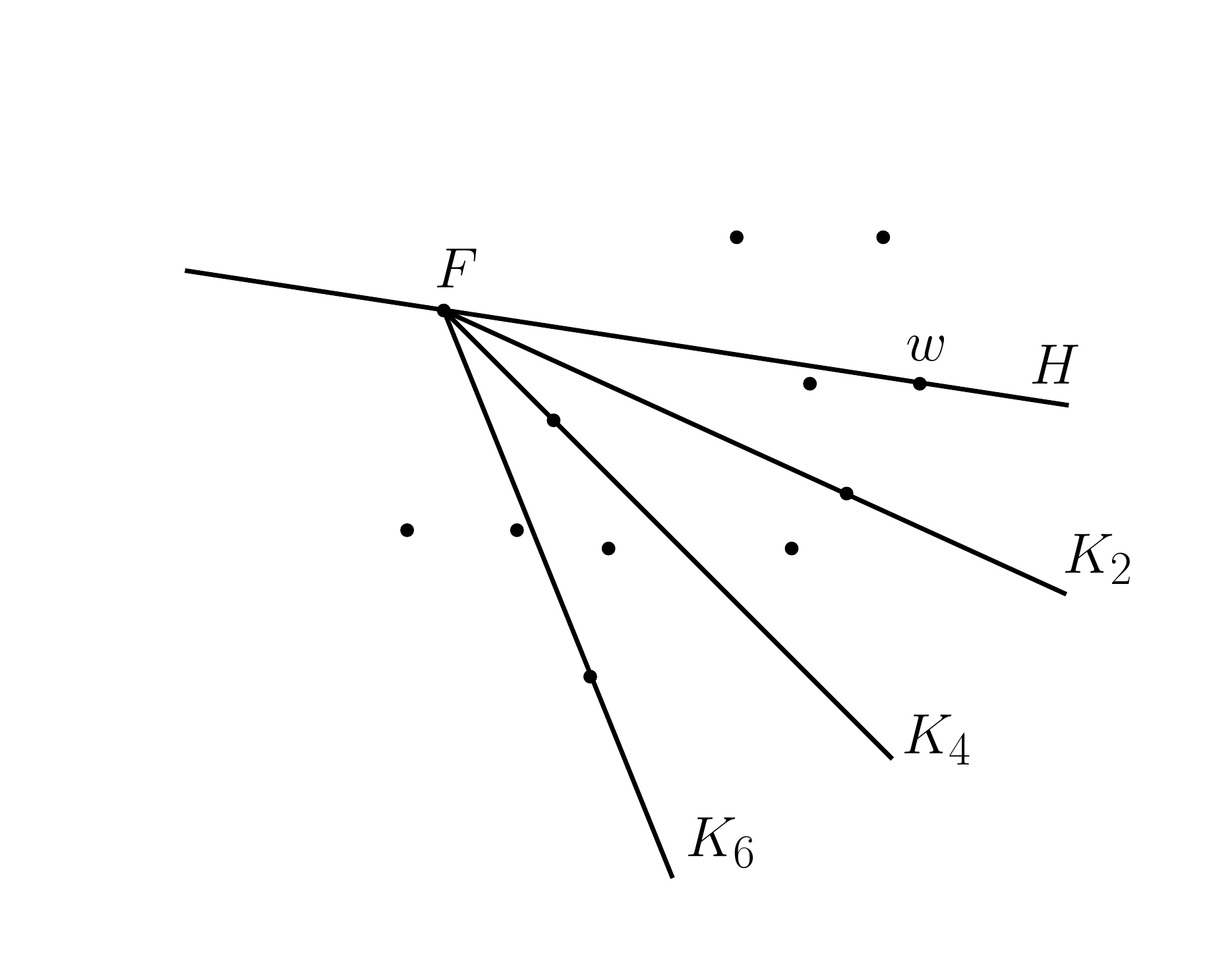}}
\caption{An example of a fan partition of $\R^2$ for $n=5$ and $c=4$.}
\end{center}
\end{figure}

\begin{remark}
As a consequence of the previous proof, there is a convex partition $(C_1, \dots, C_n)$ of $\R^d$, such that each piece $C_i$ has positive measure with respect to at least $c$ of the measures $\mu_1, \dots, \mu_m$, and additionally all pieces $C_1, \dots, C_n$ have positive measure with respect to $d-1$ measures $\mu_{j_1}, \dots, \mu_{j_{d-1}}$, where $F=\conv\{v_{j_1}, \dots, v_{j_{d-1}}\}$ and $v_{j_k} \in \relint(\supp(\mu_{j_k}))$, for every $1 \leq k \leq d-1$. In contrast to the statement of Theorem \ref{thm:one measure}, we cannot guarantee an equipartition, and we cannot choose which measure will be contained in each piece.
\end{remark}

\subsection{Proof of Theorem \ref{thm:one measure}}
Let $d\geq 2$, $m\geq 2$, and $c\geq 2$ be integers, and let $n\geq 2$ be a prime power.
Under the assumptions of the theorem on $m$, Theorem \ref{thm:no map one measure} yields the non-existence of an $\sym_n$-equivariant map
\[
\emp(\mu_m,p) \longrightarrow \bigcup\A(m,n,c).
\]
Consequently, Theorem \ref{thm : CS-TM scheme one measure} implies that for every collection of $m$ measures $\mu_1, \dots, \mu_m$ in $\R^d$ there exists a convex partition $(C_1, \dots, C_n)$ of $\R^d$ with the property that  each of the subsets $C_i$ has positive measure with respect to at least $c$ of the measures $\mu_1, \dots, \mu_m$. 
In other words, 
\[
\# \big\{ j : 1\leq j\leq m,\,\mu_j(C_i)>0\big\} \geq c
\] 
for every $1\leq i \leq n$.\qed

\begin{remark}
In order to prove the non-existence of the $G$-equivariant map $f: \emp(\mu_m,n) \longrightarrow \bigcup \A$, one could directly try show that there is no $G$-equivariant map $\conf(\R^d,n) \longrightarrow \bigcup \A = \colim_{P(\A)} \C$. 
However, since the dimension of the order complex of $P(\A)$ is 
\[
\dim(\Delta(P(\A))) = nc-n-c,
\]
this method proves Theorem \ref{thm:one measure} only for $c \leq d$, which follows directly from the result of Sober\'on \cite{Soberon2012}.
\end{remark}

\subsection{Proof of Theorem \ref{thm:sum of measures}}
\label{sec:proof referee - 01}

Let $d\geq 2$ and $c\geq 2$ be integers, let $n\geq 2$ be a prime power and let $m \geq 2$ be an integer that satisfies the conditions of the theorem.
Theorem \ref{thm:no map sum of measures} yields the non-existence of an $\sym_n$-equivariant map
\[
\emp(\mu,p) \longrightarrow \bigcup\widetilde{\A}(m,n,c),
\]
and Theorem \ref{thm : CS-TM scheme sum of measures} implies that for every collection of $m$ measures $\mu_1, \dots, \mu_m$ in $\R^d$ there exists a convex partition $(C_1, \dots, C_n)$ of $\R^d$ with the property that  each of the subsets $C_i$ has positive measure with respect to at least $c$ of the measures $\mu_1, \dots, \mu_m$. 
In other words, 
\[
\# \big\{ j : 1\leq j\leq m,\,\mu_j(C_i)>0\big\} \geq c
\] 
for every $1\leq i \leq n$.

\subsection{The first proof of Theorem \ref{th:referee's}}
Let $d\geq 2$, $n\geq 2$ and $c\geq d$ be integers, and set $m= n(c-d)+d$.
Consider $m$ measures $\mu_1, \dots, \mu_m$ on $\R^d$ which are positive, finite and absolutely continuous with respect to the standard Lebesgue measure.
Thus, the interiors of their supports are non-empty.
For  each $d \leq j \leq m$, choose a point $v_j \in \int(\supp (\mu_j))$ in the interior of the support of the measure $\mu_j$.
Now consider the set $V:=\{v_d, \dots , v_m\}$ as a point measure.

\medskip
The result of Sober\'on \cite{Soberon2012} applied to the collection of measures  $\mu_1, \dots, \mu_{d-1},V$ guaranties the existence of a convex partition $(C_1,\dots,C_n)$ of $\R^d$ that equiparts $\mu_1, \dots, \mu_{d-1}$ and in addition the point measure $V$. 
For the point measure $V$ it means that
\[
\# (V\cap C_i)\geq  \Big\lceil\frac{m-d+1}{n} \Big\rceil \geq \Big\lceil\frac{n(c-d)+d-d+1}{n} \Big\rceil = c-d +1,
\]
for every $1\leq i\leq n$. 
Consequently, 
\[
\# \big\{ j : 1\leq j\leq m,\,\mu_j(C_i)>0\big\} \geq c
\] 
for all $1\leq i \leq n$.

\subsection{The second proof of Theorem \ref{th:referee's}}
\label{sec:proof referee - 02}

Let $d\geq 2$, $n\geq 2$ and $c\geq d$ be integers, and let $m= n(c-d)+d$.
Consider $m$ measures $\mu_1, \dots, \mu_m$ on $\R^d$ which are positive, finite and absolutely continuous with respect to the standard Lebesgue measure.
We introduce a new measure  $\nu$ by
	\[
	\nu (A) = \sum_{i=d}^m \frac{\mu_i(A)}{\mu_i(\R^d)},
	\]
for $A\subseteq\R^d$ a measurable set.
Then $\nu(\R^d) = m-(d-1) = n(c-d)+1$, and each of the measures $\mu_d, \dots, \mu_m$ can contribute at most $1$ to $\nu(A)$ for any $A \subseteq \R^d$.  
Therefore, if $\nu(A) > k$ for some non-negative integer $k$, then $A$ must have positive size in at least $k+1$ of the measures from $\mu_d, \ldots, \mu_{m}$.  

\medskip
Again, the result of Sober\'on \cite{Soberon2012} applied to the collection of measures  $\mu_1, \dots, \mu_{d-1},\nu$ guaranties the existence of a convex partition $(C_1,\dots,C_n)$ of $\R^d$ that equiparts measures $\mu_1, \dots, \mu_{d-1}$ as well as the measure $\nu$. 
In particular,
\[
	\nu(C_i) = \frac{n(c-d)+1}{n} > c - d \geq 0
\]
for every $1\leq i\leq n$. 
Therefore, each of the pieces $C_i$ of the partition has positive size in at least $c-d+1$ measures among $\mu_d, \ldots, \mu_m$.
Having in addition exactly $\frac{1}{n}$ fraction of each of the measures $\mu_1, \ldots, \mu_{d-1}$ each convex piece $C_i$ of the partition has positive size in at least $c$ measures among $\mu_1, \ldots, \mu_m$.\qed

\subsection{Proof of Theorem \ref{thm:optimal}}

First, we prove that $m=n(c-d)+d$ is the optimal bound in Theorem \ref{thm:optimal} for $d=1$.  
It suffices to consider $n(c-1)+1$ measures on $\R$, each concentrated near a point, so that the supports are pairwise disjoint.  
Each convex partition is given by a family of intervals whose interiors are pairwise disjoint.
Thus, and consecutive intervals can share the support of at most one measure.  
A careful counting, as illustrated in Figure \ref{fig:optimal-01}, shows that $n(c-1)+1$ measures are needed for each interval to intersect the support of $c$ measures.

\begin{figure}[ht]
\begin{center}
\includegraphics[width=0.96\textwidth]{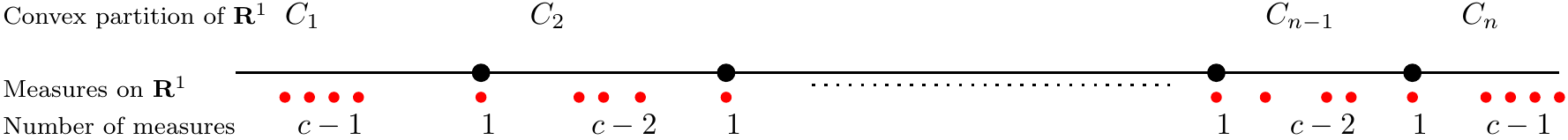}
\caption{Optimality of the bound in Theorem \ref{thm:optimal} in the case $d=1$.}
\label{fig:optimal-01}
\end{center}
\end{figure}

\medskip
For the case $d\geq 2$ we use the following construction illustrated in Figure \ref{fig:optimal-02}.  
Let $e_1, \ldots, e_{d}$ denote the vector of the standard basis, and let $\ell$ be the line $e_d + \operatorname{span}(e_{d-1})$.  
We place $m - d +1$ measure $\mu_d, \ldots, \mu_m$ concentrated in points of $\ell$, as in the case of dimension $1$.
Now consider a regular simplex in the affine subspace $-e_d + \operatorname{span}(e_1, \ldots, e_{d-2})$, of dimension $d-2$, centered around $-e_d$.  
Place $d-1$ measures $\mu_1, \ldots, \mu_{d-1}$, each concentrated near a vertex of the simplex we just constructed.  
We ask from our partition into convex subsets to equiparts the measures $\mu_1, \ldots, \mu_{d-1}$.
Thus, each piece of the convex partition has positive size in $\mu_1, \ldots, \mu_{d-1}$.
In oder for the interior of their convex hulls to be disjoint, they have to intersect the remaining measures $\mu_d, \ldots, \mu_{m}$ as in the case of dimension $1$. 
Since they must each intersect at least $c-(d-1)$ measure on the line $\ell$, and $\ell$ has $m-(d-1)$ measures, the case of dimension one implies that
\[
m-(d-1) \geq  n(c-(d-1)-1) +1 \qquad\Longleftrightarrow\qquad m\geq n(c-d) + d.
\]
This concludes the proof of the theorem. \qed

\begin{figure}[ht]
\begin{center}
\includegraphics[width=0.45\textwidth]{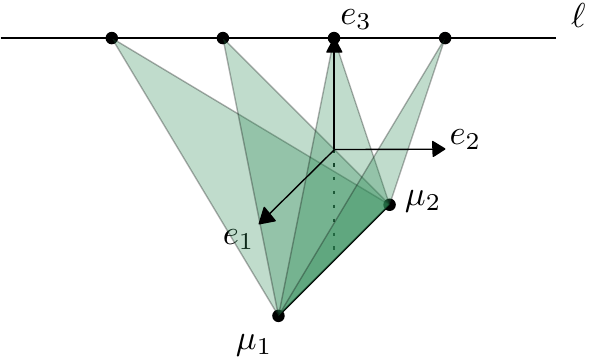}
\caption{Optimality of the bound in Theorem \ref{thm:optimal} in the case $d\geq2$.}
\label{fig:optimal-02}
\end{center}
\end{figure}

\subsection{Proof of Theorem \ref{th:referee's - 02}}\label{sub:Proof of Thm. Referee 02}
For the proof of the theorem we first derive the following version of the pigeonhole principle.

\begin{claim}\label{claim}
Let $m\geq 1$ and $1\leq r\leq m$ be integers, and let $0\leq x_1,\dots,x_m\leq 1$  and $0\leq \varepsilon\leq 1$ be real numbers.
If 
\begin{equation}
	\label{eq:assumption}
	x_1+\dots+x_m\geq r-1+\varepsilon(m-r+1),
\end{equation}
then there exist at least $t$ indices $1\leq i_1 < \dots <i_{t}\leq m$ such that 
\[
\min\{x_{i_1},\dots,x_{i_{r}}\}\geq\varepsilon.
\]
\end{claim}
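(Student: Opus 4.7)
My plan is to prove the claim by contrapositive: I will show that if strictly fewer than $r$ of the values $x_1,\dots,x_m$ are $\geq\varepsilon$, then the sum $x_1+\dots+x_m$ must be strictly less than $r-1+\varepsilon(m-r+1)$, contradicting the hypothesis. (I read the conclusion of the claim as asserting the existence of at least $r$ indices — the ``$t$'' appears to be a typo, since the ensuing minimum runs over $x_{i_1},\dots,x_{i_r}$.)

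The argument is a direct counting split. Suppose, for contradiction, that the number of indices $i$ with $x_i\geq\varepsilon$ is at most $r-1$. Denote by $S$ the set of these ``large'' indices, so $|S|\leq r-1$, and by $T=[m]\setminus S$ the set of ``small'' indices, for which $x_i<\varepsilon$. Using the bounds $x_i\leq 1$ on $S$ and $x_i<\varepsilon$ on $T$, I would estimate
\[
\sum_{i=1}^{m} x_i \;=\; \sum_{i\in S} x_i + \sum_{i\in T} x_i \;\leq\; |S|\cdot 1 + |T|\cdot\varepsilon \;\leq\; (r-1) + \varepsilon(m-r+1),
\]
where the last inequality uses $|S|\leq r-1$ and $|T|=m-|S|\geq m-r+1$, together with $\varepsilon\leq 1$ (so enlarging $|S|$ by $1$ at the cost of shrinking $|T|$ by $1$ only increases the bound).

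The main subtlety is turning this into a \emph{strict} inequality, which is what the contradiction requires. If $T\neq\emptyset$, this is automatic, because then the bound $\sum_{i\in T} x_i\leq|T|\varepsilon$ is strict (each $x_i<\varepsilon$ and there is at least one term). Since $|T|\geq m-r+1\geq 1$ by the hypothesis $r\leq m$, we always have $T\neq\emptyset$. Hence $\sum_i x_i < (r-1)+\varepsilon(m-r+1)$, contradicting \eqref{eq:assumption}. The edge cases $\varepsilon=0$ (conclusion trivial from $x_i\geq 0$) and $\varepsilon=1$ (forcing $x_i=1$ in $T$, which is incompatible with $x_i<\varepsilon$ unless $T=\emptyset$, already handled) cause no trouble. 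I do not anticipate a real obstacle here; the only thing to be careful about is the bookkeeping of the strict inequality so that the conclusion matches the non-strict hypothesis \eqref{eq:assumption}.
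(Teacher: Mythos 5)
Your proof is correct and essentially the same as the paper's: the paper sorts $x_1\geq\dots\geq x_m$ and observes that $x_r,\dots,x_m<\varepsilon$ while $x_1,\dots,x_{r-1}\leq 1$, which is exactly your $S$/$T$ split in disguise. Your extra care about the strictness of the inequality (via $T\neq\emptyset$, using $r\leq m$) is a welcome clarification of a point the paper leaves implicit.
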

\begin{proof}
Suppose that the claim does not hold.
Without lost of generality we can assume that 
\[
0\leq x_m \leq\dots\leq x_r\leq x_{r-1}\leq\dots\leq x_1\leq 1.
\]
Then
\[
 x_m \leq\dots\leq x_r<\varepsilon.
\]
Consequently, 
\[
x_1+\dots+x_m=(x_1+\dots+x_{r-1})+(x_{r}+\dots +x_m)<r -1+\varepsilon(m-r+1).
\] 	
We reached a contradiction with the assumption \eqref{eq:assumption}.
\end{proof}

\medskip 
Now we proceed with the proof of Theorem \ref{th:referee's - 02}.
Let $d\geq 2$, $n\geq 2$ and $c\geq d$ be integers, and let $m= n(c-d)+d$.
Consider $m$ measures $\mu_1, \dots, \mu_m$ on $\R^d$ which are positive, finite and absolutely continuous with respect to the standard Lebesgue measure.
We partition our set of $m$ measures into $d$ subsets $I_1, \ldots, I_d$ such that  
\begin{compactitem}[\qquad---]
\item  $c=\sum_{k=1}^d r_k\geq d$ for some positive integers $r_1,\dots, r_d$, and
\item $\# I_k = m_k = n(r_k -1) +1$ for all $1\leq k\leq d$. 
\end{compactitem}
For each $1\leq k\leq d$ we define the measure $\nu_k$ on $\R^d$ by
\[
\nu_k(A) = \sum_{\mu \in I_k} \frac{\mu(A)}{\mu (\R^d)},
\]
where $A\subseteq\R^d$  is a measurable set.
Consequently, $\nu_k (\R^d) = \#I_k = n(r_k -1) +1$. 

\medskip
Using the result of Sober\'on \cite{Soberon2012} applied to the collection of measures $\nu_1, \ldots, \nu_d$ we get a convex partition $(C_1,\dots,C_n)$ of $\R^d$ with the property that 
\[
\nu_k (C_i) = \frac{\nu_k (\R^d)}{n} = r_k -1 + \frac{1}{n}
\]
for every $1\leq k\leq d$ and every $1\leq i\leq n$.

\medskip
Now fix $k$ and $i$, and consider  $m_k$ real numbers  $\frac{\mu(C_i)}{\mu (\R^d)}$, $\mu\in I_k$, in the interval $[0,1]$.
Since 
\[
 \sum_{\mu \in I_k} \frac{\mu(C_i)}{\mu (\R^d)}=\nu_k (C_i)= r_k -1 + \frac{1}{n},
\]
we can apply Claim \ref{claim} to get at least $r_k$ numbers out of $\frac{\mu(C_i)}{\mu (\R^d)}$, $\mu\in I_k$, greater or equal than $\varepsilon_k=\frac{1}{n(m_k-r_k+1)}$.
Thus, there is at least $r_k$ measures $\mu$ in the set $I_k$ such that 
\[
\mu(C_i)\geq \varepsilon_k\mu(\R^d)=\frac{1}{n(m_k-r_k+1)}\mu(\R^d),
\]
where $1\leq k\leq d$.
Now, we choose each $r_k$ to be either $\lfloor \frac{c}{d} \rfloor$ or $\lceil \frac{c}{d} \rceil$ in such a way that $\sum_{k=1}^d r_k = c$.
Consequently, we obtain a uniform lower bound for each fraction $\varepsilon_k$ that we were seeking for.

\medskip
Hence, each of the subsets $C_1,\dots,C_n$ has at least $\varepsilon$ fraction in at least $c$ of the measures $\mu_1, \dots, \mu_m$, and the proof of the theorem is concluded.\qed

\subsection{Proof of Theorem \ref{th:referee's - 03}}
The proof of the theorem proceeds along the lines of the proof of Theorem \ref{th:referee's - 02} presented in Section \ref{sub:Proof of Thm. Referee 02}.
Let $d\geq 2$, $m\geq 2$, $n\geq 2$, and $c\geq 2d$ be integers, and let $0<\alpha<\frac1{n}$ be a real number.
Assume that
\[
m \ge (c-d)\Big( \frac{1-\alpha}{\frac{1}{n}-\alpha}\Big) +d - 1.
\] 

\medskip
Let $r_1, \ldots, r_d$ be integers such that $r_k \ge 2$ for all $1 \le k \le d$, and let $r_1 + \dots + r_d = c$.  
For each $1 \le k \le d$ we set
\[
m_k = \Big\lceil (r_k - 1) \Big( \frac{1-\alpha}{\frac1n - \alpha}\Big)\Big\rceil >0.
\]
Then $m_k < (r_k -1) \big( \frac{1-\alpha}{\frac1n - \alpha}\big) + 1$ for $1 \le k \le d$, and consequently
\[
m_1 + \dots + m_d < (c-d)\Big( \frac{1-\alpha}{\frac1n - \alpha}\Big)+d \le m + 1.
\]
Since $m, m_1, \dots, m_d$ are all integers, the previous inequality implies that 
\[
m_1 + \dots + m_d\leq m.
\]  

\medskip
Now, out of thee set of $m$ measures $\{\mu_1,\dots,\mu_m\}$ we can choose $d$ non-empty disjoint subsets of measures $I_1, \dots, I_d$ such that $\#I_k = m_k$, $1 \leq k \leq d$.  
For each $1 \leq k \leq d$ we define, as before, the measure $\nu_k$ on $\R^d$ by
\[
\nu_k(A) = \sum_{\mu \in I_k} \frac{\mu(A)}{\mu (\R^d)},
\]
where $A \subseteq \R^d$ is a measurable set.
Hence, 
\[
\nu_k (\R^d) = \# I_k = m_k = \Big\lceil (r_k - 1) \Big( \frac{1-\alpha}{\frac1n - \alpha}\Big)\Big\rceil \geq (r_k -1)\Big( \frac{1-\alpha}{\frac{1}{n}-\alpha}\Big).
\]  
The inequality 
\[
m_k \geq (r_k -1)\Big( \frac{1-\alpha}{\frac{1}{n}-\alpha}\Big)
\]
 can be rearranged into
\[
\frac{m_k}{n} \ge r_k -1 + \alpha (m_k - r_k + 1).
\]

\medskip
The result of Sober\'on \cite{Soberon2012} applied on the collection of measures $\nu_1, \ldots, \nu_d$ gives a convex partition $(C_1,\dots,C_n)$ of $\R^d$ with the property that 
\[
\nu_k (C_i) = \frac{\nu_k (\R^d)}{n} = \frac{m_k}{n} \ge r_k - 1 + \alpha(m_k - r_k + 1).
\]
for every $1\leq k\leq d$ and every $1\leq i\leq n$.  Now, fix any $1 \le i \le n$ and $1 \le k \le d$.  Consider the $m_k$ numbers of the form $\frac{\mu(C_i)}{\mu(\R^d)}$ for $\mu \in I_k$.  From Claim \ref{claim}, it follows that at leas $r_k$ of those numbers are at least $\alpha$.  If we repeat this for every $k$, we obtain that $C_i$ has a fraction $\alpha$ of at least $r_1 + \dots + r_k = c$ measures, as desired.

If the number $\frac{1-\alpha}{\frac{1}{n}-\alpha}$ is an integer, then we set $m_k=(r_k -1)\left( \frac{1-\alpha}{\frac{1}{n}-\alpha} \right)$. 
Consequently, we only require the bound
\[
m \ge (c-d)\Big(\frac{1-\alpha}{\frac{1}{n}-\alpha}\Big)
\]
on the number of measures to derive the theorem. \qed

\begin{remark}
One can observe that the extra ``$+(d-1)$'' on the bound of $m$ is not always needed, but the precise value would then depend on a careful choice of the parameters $r_1, \dots, r_d$, which would require a case-by-case analysis.  
Some cases yield very clean bounds.  
For example, if $\alpha = \frac{1}{2n-1}$, we are getting many measure that have more than half of what would be the optimal bound.  
Indeed, for $m = 2n(c-d)$ measures in $\R^d$, there exists a partition of $\R^d$ into $n$ convex parts such that each part has at least $\frac{1}{2n-1}$ fraction of at least of $c$ measures.
\end{remark}

\end{document}